\newtheorem{theorem}{Theorem}[section]
\newtheorem{lemma}[theorem]{Lemma}
\newtheorem{remark}[theorem]{Remark}
\newtheorem{definition}[theorem]{Definition}
\newtheorem{corollary}[theorem]{Corollary}
\newtheorem{proposition}[theorem]{Proposition}
\newtheorem{lem-def}[theorem]{Lemma-Definition}
\newcommand{\hooklongrightarrow}{\lhook\joinrel\longrightarrow}
\renewenvironment{proof}{{\bfseries Proof.}}{\qed}
\newcommand{\R}{\mathbb R}
\newcommand{\N}{\mathbb N}
\newcommand{\Q}{\mathbb Q}
\def\op{\operatorname}
\def\al{\alpha}
\def\ali{\alpha_\infty}
\def\as#1{\renewcommand\arraystretch{#1}}
\def\bs{\vskip.5cm}
\def\be{\beta}
\def\bi{b_\infty}
\def\chr{\op{char}}
\def\comb#1#2{\as{0.8}\left(\!\!\begin{array}{c}
#1\\#2
\end{array}\!\!\right)\as{1}}
\def\cl#1{[#1]_\nu}
\def\dg{\op{deg}}
\def\di{\delta_\infty}
\def\diso{\lower.4ex\hbox{$\downarrow$}\raise.4ex\hbox{\mbox{\scriptsize
$\wr$}}}
\def\dm{\Delta}
\def\e{\medskip}
\def\ep#1{\exp(\Pi i#1)}
\def\ep{\epsilon}
\def\erel{e_{\op{rel}}}
\def\g{\Gamma}
\def\ga{\gamma}
\def\gg{\mathcal{G}}
\def\ggn{\mathcal{G}_\nu}
\def\gi{\g_{\infty}}
\def\gn{\g_\nu}
\def\gq{\g_\Q}
\def\gr{\operatorname{gr}}
\def\inn{\op{in}_\nu}
\def\imp{\ \Longrightarrow\ }
\def\iso{\ \lower.3ex\hbox{\as{.08}$\begin{array}{c}\lra\\\mbox{\tiny $\sim\,$}\end{array}$}\ }
\def\ka{\kappa}
\def\km{k_\nu}
\def\kp{\op{KP}}
\def\kpcal{\mathcal{KP}}
\def\kpi{\op{KP}_\infty}
\def\kpm{\op{KP}(\mu)}
\def\kpn{\op{KP}(\nu)}
\def\kpr{\op{KP}(\rho)}
\def\kx{K[x]}
\def\La{\Lambda}
\def\lx{\operatorname{lex}}
\def\lg{l\raise.6ex\hbox to.2em{\hss.\hss}l}
\def\lra{\,\longrightarrow\,}
\def\lx{\operatorname{lex}}
\def\mi{m_\infty}
\def\ml{\op{mult}}
\def\mmu{\mid_\nu}
\def\orb{\hbox to  .3em{$\backslash$}\backslash}
\def\p{\mathfrak{p}}
\def\pb#1{\partial_b(#1)}
\def\pcv{\op{PrCvx}}
\def\phmn{\Phi_{\mu,\nu}}
\def\ppa{\mathcal{P}_{\alpha}}
\def\prt#1#2{\partial_{#1}(#2)}
\def\pset{\mathcal{P}}
\def\rhi{\rho_{\infty}}
\def\rlex{\R^I_{\lx}}
\def\sii{\ \Longleftrightarrow\ }
\def\smu{\sim_\mu}
\def\snu{\sim_\nu}
\def\srh{\sim_\rho}
\def\sp{\op{Spec}}
\def\supp{\op{supp}}
\def\t{\theta}
\def\ti{t_\infty}
\newcounter{cs}
\newcommand{\casos}{\begin{itemize}}
\newcommand{\fcasos}{\end{itemize}\setcounter{cs}{1}}
\newfont{\tit}{cmr12 scaled \magstep3}
\title[Limit key polynomials]{Invariants of limit key polynomials}
\subjclass[2010]{Primary 13A18; Secondary 12J20, 13J10, 14E15}%, 12J10}
\author[Alberich]{Maria Alberich-Carrami$\tilde{\mbox{n}}$ana}
\address{Departament de Matem\`atiques,  Universitat Polit\`ecnica de Catalunya, Av. Diagonal 647, Barcelona 0828, Catalonia, Spain}
\email{Maria.Alberich@upc.edu}
\author[F. Boix]{Alberto F. Boix}
\address{Ben Gurion Univ Negev, Dept Math, POB 653, IL-84105 Beer Sheva, Israel}
\email{albertof.boix@gmail.com}
\author[Fern\'andez]{Julio Fern\'andez}
\author[Gu\`ardia]{Jordi Gu\`ardia}
\address{Departament de Matem\`atica Aplicada IV, Escola Polit\`ecnica Superior d'Enginye\-ria de Vilanova i la Geltr\'u, Av. V\'\i ctor Balaguer s/n. E-08800 Vilanova i la Geltr\'u, Catalonia, Spain}
\email{julio.fernandez.g@upc.edu, jordi.guardia-rubies@upc.edu}
\author[Nart]{Enric Nart}
\author[Ro\'e]{Joaquim Ro\'e}
\address{Departament de Matem\`{a}tiques,         Universitat Aut\`{o}noma de Barcelona,         Edifici C, E-08193 Bellaterra, Barcelona, Catalonia, Spain}
\email{nart@mat.uab.cat,\quad jroe@mat.uab.cat}
\date{}
\keywords{key polynomial, MacLane chain, valuation}
\begin{document}

\begin{abstract}
Let $\nu$ be a valuation of arbitrary rank on the polynomial ring $\kx$ with coefficients in a field $K$.
We prove comparison theorems between MacLane-Vaqui\'e key polynomials for valuations $\mu\le\nu$ and abstract key polynomials for $\nu$.%, in the spirit of a paper by J. Decaup, W.Mahboub and M. Spivakovsky \cite{Dec}. 

Also, some results on invariants attached to limit key polynomials are obtained. In particular, if $\chr(K)=0$ we show that all limit key polynomials of unbounded continuous MacLane chains have numerical character equal to one.
\end{abstract}

%No assumption is made on the rank of the valuations.

\maketitle

\section*{Introduction}

Consider a valuation $\nu$ on the polynomial ring $\kx$, with coefficients in a field $K$. Let $\gn$ be its value group.
The graded algebra of $\nu$ is the integral domain
$$
\gr_{\nu}(\kx)=\bigoplus\nolimits_{\alpha\in\g_\nu}\ppa/\ppa^+,
$$
where $\ppa=\{g\in \kx\mid \nu(g)\ge \alpha\}\supset\ppa^+=\{g\in \kx\mid \nu(g)> \alpha\}$.

%For any valuation $\mu$ taking values in a subgroup of $\gn$, we say that $$\mu\le\nu\quad\mbox{ if }\quad\mu(f)\le\nu(f),\qquad \forall\,f\in\kx.$$ 

A \emph{MacLane-Vaqui\'e (MLV) key polynomial} for $\nu$ is a monic polynomial $\phi\in\kx$ whose initial term% $\inn\phi$
\ generates a prime ideal in the graded algebra $\gr_{\nu}(\kx)$, which cannot be generated by the initial term of a polynomial of smaller degree.   

The \emph{degree} of $\nu$ is the minimal degree of a MLV key polynomial for $\nu$.\e

%If $\mu<\nu$, the set $\kpm\subset\kx$ of MLV key polynomials for $\mu$ is non-empty.\e

By a celebrated result of MacLane and Vaqui\'e, $\nu$ is the stable limit of a sequence of augmentations of valuations on $\kx$ 
\begin{equation}\label{mu<}
\mu_0<\mu_1<\cdots<\mu_i<\cdots <\nu
\end{equation}
where $\mu_0$ is a valuation of degree one \cite{mcla,Vaq,MLV}.\e

Let $\kpcal_0=\{\phi_0\}$, where $\phi_0\in\kp(\mu_0)$ is any MLV key polynomial of degree one.

These augmentations of valuations can be either \emph{ordinary} or \emph{limit} augmentations. In both cases, certain MLV key polynomials  of the intermediate valuations $\mu_i<\nu$ are involved.

If $\mu_{i-1}<\mu_i$ is an ordinary augmentation, there exists $\phi_i\in\op{KP}(\mu_{i-1})$ such that $\mu_i$ is equal to the truncated valuation $\nu_{\phi_i}$. That is, in terms of $\phi_i$-expansions of polynomials $f\in\kx$, $\mu_i$ acts as follows%with canonical , 
$$f=\sum\nolimits_{0\le s}a_s \phi_i^s,\quad \deg(a_s)<\deg(\phi_i)\ \imp\ \mu_i(f)=\min\left\{\nu\left(a_s\phi_i^s\right)\mid 0\le s\right\}.$$
%Also, the polynomial $\phi_i$ is a MLV key polynomial for $\mu_i$ of minimal degree. 
To any such ordinary augmentation step we attach the set 
$$
\kpcal_i=\left\{\phi_i\right\}.
$$

If $\mu_{i-1}<\mu_i$ is a limit augmentation, there exists a countably infinite chain of ordinary augmentations of constant degree
\begin{equation}\label{rho<}
\mu_{i-1}=\rho_0\ < \rho_1\ < \ \cdots \ < \ \rho_i\ < \ \cdots \ <\mu_i,\qquad\qquad \deg(\rho_i)=m,\quad \forall\,i\ge1,
\end{equation}
admitting non-stable polynomials, all of them of degree larger than $m$.

A polynomial $f\in\kx$ is \emph{stable} with respect to the chain if there exists an index $i_0$ such that $\rho_i(f)=\rho_j(f)$ for all $j>i\ge i_0$. In this case, we define this stable value by $\rhi(f)$.

In such a chain $\left(\rho_i\right)_{i\ge0}$ of valuations, the value group is constant except eventually for the valuation $\rho_0$. Let us denote this group by  $$\gi=\g_{\rho_1}=\g_{\rho_2}=\;\cdots\;=\g_{\rho_i}=\;\cdots$$

The set $\kpi(\mu_i)$ of \emph{MLV limit key polynomials} for $\mu_i$ is defined as the set of monic non-stable polynomials of minimal degree. For any $\phi_i\in\kpi(\mu_i)$ we have again $\mu_i=\nu_{\phi_i}$.%, and $\phi_i$ is a MLV key polynomial for $\mu_i$ of minimal degree. 

To any such limit augmentation step, we attach the totally ordered set 
$$
\kpcal_i=\left\{\chi_1,\dots,\chi_i,\dots\right\}+\left\{\phi_i\right\},
$$
where $\chi_i\in\kp(\rho_{i-1})$ is a MLV key polynomial such that $\rho_i=\nu_{\chi_i}$, and we consider the usual sum ot totally ordered sets.\e

The main result of MacLane-Vaqui\'e states that $\nu$ falls in one, and only one, of the following cases \cite[Thm.4.8]{MLV}.  \e

(1) \ After a finite number $r$ of augmentation steps, we get $\mu_r=\nu$.\e

(2) \ After a finite number $r$ of augmentation steps, $\nu$ is the stable limit  $\nu=\rhi$, of
some countably infinite chain of ordinary augmentations of $\mu_r$  as in (\ref{rho<}) (with $r=i-1$), with the property that all polynomials in $\kx$ are stable.\e

(3) \ It is the stable limit, $\nu=\lim_{i\to\infty}\mu_i$, of a countably infinite chain of mixed augmentations as in (\ref{mu<}), with unbounded degree.\e

We say that $\mu$ has \emph{finite depth} $r$, \emph{quasi-finite} depth $r$, or \emph{infinite depth}, respectively.

The valuations of finite depth are characterized by the condition \cite[Lem.-Def.4.9]{MLV} 
$$\kpn\ne\emptyset\qquad \mbox{or} \qquad\supp(\nu)\ne0.$$ 

If $\nu$ has quasi-finite depth, consider the totally ordered set
$$
\kpcal_{\infty}=\left\{\chi_1,\dots,\chi_i,\dots\right\},
$$
where $\chi_i\in\kp(\rho_{i-1})$ is a MLV key polynomial such that $\rho_i=\nu_{\chi_i}$, for all $i\ge 1$.\e

\as{1.3}
Then, the well-ordered set of polynomials:\vskip-0.1cm

$$
\kpcal=\begin{cases}
\kpcal_0+\cdots+\kpcal_r,&\mbox{ if $\nu$ has finite depth }r,\\
\kpcal_0+\cdots+\kpcal_r+\kpcal_{\infty},&\mbox{ if $\nu$ has quasi-finite depth }r,\\
\kpcal_0+\cdots+\kpcal_i+\cdots,&\mbox{ if $\nu$ has infinite depth},
\end{cases}
$$\vskip0.1cm

\as{1}
\noindent is a \emph{complete system of key polynomials} for $\nu$, as defined by   
F.J. Herrera Govantes, W. Mahboub, M.A. Olalla Acosta and M. Spivakovsky in \cite{hmos}. %In that paper, the authors found a direct way to construct the set  $\kpcal$ of key polynomials. 
That is, for any $f\in\kx$ there exists $Q\in\kpcal$ such that $\nu(f)=\nu_Q(f)$. As a consequence, for any $\al\in\gn$, the set of polynomials
$$
\kpcal_\al=\left\{a\,Q_1^{n_1}\cdots Q_\ell^{n_\ell}\mid a\in K^*,\ Q_1,\dots,Q_\ell\in\kpcal,\ n_1,\dots,n_\ell\in\N\right\}\cap \pset_\al
$$
is a set of generators of $\pset_\al$ as an additive group.

This property is the motivation for Spivakovsky's strategy to attack the problem of local uniformization \cite{NS2016,SS}. 
%The work of J.-C. San Saturnino \cite{SS} is  a significant step in this direction.

Certain \emph{abstract} key polynomials were introduced by J. Decaup, W. Mahboub and M. Spivakovsky
as an intrinsic characterization of the polynomials in $\kpcal$ \cite{Dec}. This idea was developed by Novacoski and Spivakovsky in \cite{NS2018,NS2019}, where they proved some further properties of key polynomials.\e

In this paper, we have a double aim. On one hand, in section \ref{secAKP}, we review some of these results aiming at a determination of which MLV key polynomials of the intermediate valuations $\mu_i$ are abstract key polynomials for $\nu$. We complete in this way some partial results from \cite[Sec.3]{Dec}.     

In section \ref{secLKP} we obtain similar results for limit key polynomials. An abstract limit key polynomial is an element in the well-ordered set $\kpcal$ which does not admit an immediate predecessor. Novacoski and Spivakovsky found an intrinsic characterization  of these polynomials in \cite{NS2018}. We prove that they coincide with the MLV key polynomials of the intermediate valuations $\mu_i$ which are a limit augmentation of the immediate predecessor $\mu_{i-1}$ in (\ref{mu<}).

On the other hand, in section \ref{secInvLKP}, we  obtain some results on invariants attached to limit key polyomials. Our main result in this section is Theorem \ref{ubthm}, where we prove an identity between some of these invariants:
\begin{equation}\label{tibi}
\ti(\phi)\,\bi=\ml(\phi),
\end{equation}
where $\phi$ is a limit key polynomial of any limit augmentation $\mu_{i-1}<\mu_i$ such that the sequence of values $\nu(\rho_i)$ is unbounded in $\gi$.

The invariant $\ml(\phi)$ is the least positive integer $b$ such that $\pb{\phi}\ne0$, where $$\partial_b=\dfrac1{b!}\,\dfrac{\partial^b}{\partial x^b}$$ is the $b$-th formal derivative, which makes sense in any characteristic.

For any $i\ge1$, consider the $\chi_i$-expansion $\phi=\sum_{0\le s}a_{s,i}\chi_i^s$, and let $t_i(\phi)$ be the maximal index $s$ such that $\rho_i(\phi)=\rho_i\left(a_{s,i}\chi_i^s\right)$. This positive integer $t_i(\phi)$ stabilizes for $i$ sufficiently large \cite[Sec.3]{Vaq2004}, \cite[Sec.4]{hmos}.
We denote by $\ti(\phi)$ this stable index, which is known as the \emph{numerical character} of $\phi$.     

Finally, for any $i\ge1$, let $b_i$ be the largest positive integer such that 
$(\nu(\chi_i)-\nu(\prt{b_i}{\chi_i})/b_i$ takes a maximal value in $\gn\otimes\Q$. It is shown in \cite[Sec.7]{hmos} that $b_i$ stabilizes for $i$ sufficiently large, and $\bi$ is this stable value.

As a consequence of (\ref{tibi}), if $\chr(K)=0$, then $\ti(\phi)=\bi=1$, because $\ml(\phi)=1$.

\section{Preliminaries}
\subsection{Valuations on a polynomial ring}\label{subsecValsKx}
Consider a valued field $(K,v)$. Let $k$ be the  residue class field and $\g=v(K^*)$ the value group. Denote  the divisible hull of $\g$ by $$\gq=\g\otimes\Q.$$

Consider an extension $\nu$ of $v$ to the polynomial ring $\kx$ in one indeterminate. That is, for some embedding $\g\hookrightarrow\La$ into another ordered abelian group, we consider a mapping 
$$
\nu\colon \kx\lra \La\infty
$$
whose restriction to $K$ is $v$, and satisfies the following two conditions:\e

(1) \ $\nu(fg)=\nu(f)+\nu(g)$,\qquad$\forall\,f,g\in\kx$.\e

(2) \ $\nu(f+g)\ge\min\{\nu(f),\nu(g)\}$,\qquad$\forall\,f,g\in\kx$.\e

The \emph{support} of $\nu$ is the prime ideal $$\p=\nu^{-1}(\infty)\in\sp(\kx).$$ 

The value group of $\nu$ is the subgroup $\gn\subset \La$ generated by $\nu\left(\kx\setminus\p\right)$.

%The valuation $\nu$ may be extended to a valuation on the field of fractions $K(x)$ if and only if $\p=0$. 

%If $\p\ne0$, then $\p=F\kx$ for some monic irreducible $F\in\kx$. In this case, $\nu$ induces a valuation %$\bar{\nu}$ on the field $K_F=\kx/(F\kx)$, which is a finite field extension of $K$. 

The valuation $\nu$ induces a valuation on the residue field $\ka(\p)$, field of fractions of $\kx/\p$. Let $k_{\nu}$ be the residue class field of this valuation on $\ka(\p)$.

Clearly, $\ka(0)=K(x)$, while for $\p\ne0$ the field $\ka(\p)$ is a simple finite extension of $K$. %Thus, $\nu$ represents an extension of $v$ to a simple field extension $K(\t)/K$, where $\t$ may be algebraic or transcendental over $K$.

The extension $\nu/v$ is \emph{commensurable} if $\g_\nu/\g$ is a torsion group. In this case, there is a canonical embedding $\ \g_\nu\hooklongrightarrow \gq$.
All valuations with non-trivial support are commensurable over $v$.

%\noindent{\bf Convention. }Suppose that $\nu$ has non-trivial support $\p=F\kx$, for some monic irreducible $F\in\kx$. Then, we agree that the set of \emph{key polynomials} for $\nu$ is $$\kpn=\{F\}.$$

%For any $a \in \oo$, we denote by $\overline{a} \in  k$ its class modulo $\m$. 

%Given a polynomial $g \in \oo[x]$, we denote by $\overline{g} \in  k[x]$ the polynomial which is obtained by taking classes modulo $\m$ of all coefficients of $g$.\e

We denote the graded algebra of $\nu$ defined in the Introduction by
$$\ggn=\gr_{\nu}(\kx).$$
If $\nu$ has non-trivial support $\p\ne0$, there is a natural isomorphism of graded algebras
\begin{equation}\label{isomL}
\ggn\simeq \op{gr}_{\bar{\nu}}(\ka(\p)),
\end{equation}
where $\bar{\nu}$ is the valuation on $\ka(\p)$ induced by $\nu$. 

In particular, every non-zero homogeneous element of $\ggn$ is a unit, if $\p\ne0$.

Consider the \emph{initial term} mapping $\inn\colon \kx\to \ggn$, given by $\inn0=0$ and 
$$
\inn g= g+\pset_{\nu(g)}^+\in\pset_{\nu(g)}/\pset_{\nu(g)}^+, 
$$
if $g\ne0$. 
The following definitions translate properties of the action of  $\nu$ on $\kx$ into algebraic relationships in the graded algebra $\ggn$.

\begin{definition}Let $g,\,h\in \kx$.

We say that $g,h$ are \emph{$\nu$-equivalent}, and we write $g\snu h$, if $\ \inn g=\inn h$. 

We say that $g$ is \emph{$\nu$-divisible} by $h$, and we write $h\mmu g$, if $\ \inn h\mid \inn g$ in $\ggn$. 

\end{definition}

%If  $\mu,\,\nu$ are extensions of $v$ to $\kx$ with values in the same ordered group $\La$, we say that $$\mu\le\nu \qquad \mbox{if}\qquad \mu(f)\le\nu(f),\quad\forall\,f\in\kx. $$ In this case, there is a natural homomorphism of graded algebras $\ggm\to\ggn$. 

\subsection{MacLane-Vaqui\'e key polynomials}\label{subsecMLVKP}

Consider a valuation $\nu$ on $\kx$, extending $v$. 

%\begin{definition}\label{mu}

A polynomial $g\in\kx$ is \emph{$\nu$-irreducible} if $(\inn g)\ggn$ is a non-zero prime ideal. 
 
We say that $g$ is \emph{$\nu$-minimal} if $g\nmid_\nu f$ for all non-zero $f\in \kx$ with $\deg(f)<\deg(g)$.
%\end{definition}

The property of $\nu$-minimality admits a relevant characterization.

\begin{lemma}\cite[Prop.2.3]{KP}\label{minimal0}
Let $g\in \kx$ be a non-constant polynomial. Let 
$$f=\sum\nolimits_{0\le s}a_sg^s, \qquad  a_s\in \kx,\quad \deg(a_s)<\deg(g) $$ 
be the canonical $g$-expansion of $f\in \kx$.Then, $g$ is $\nu$-minimal if and only if
$$\nu(f)=\min\{\nu(a_sg^s)\mid 0\le s\},\quad \forall\,f\in \kx.$$
%In this case, $g\nmid_\nu f$ if and only if $\nu(f)=\nu(a_0)$. 
\end{lemma}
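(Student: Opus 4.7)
The plan is to prove the equivalence by separating the two directions, working in the graded algebra $\ggn$. One inequality, $\nu(f)\ge\min_s\nu(a_sg^s)$, is immediate from the ultrametric triangle inequality, so the content is: (a) assuming $g$ is $\nu$-minimal, show the reverse inequality for every $f$; and (b) the converse, that the formula holding for every $f$ forces $g$ to be $\nu$-minimal.

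A preliminary fact underlies (a): $\ggn$ is an integral domain. Each element has a finite homogeneous decomposition; for non-zero homogeneous terms $\inn{h_1},\inn{h_2}$ one has $\inn{h_1}\cdot\inn{h_2}=\inn(h_1h_2)\ne0$ since $\nu(h_1h_2)=\nu(h_1)+\nu(h_2)<\infty$; and a standard minimal-component argument then rules out zero divisors. In particular $\inn g$ is regular (the trivial case $\inn g=0$ can be set aside).

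For direction (a), suppose for contradiction that some $f$ has a $g$-expansion $\sum_s a_sg^s$ with $\nu(f)>m$, where $m:=\min_s\nu(a_sg^s)$, and let $S:=\{s\mid\nu(a_sg^s)=m\}$. The strict inequality amounts to the vanishing $\sum_{s\in S}\inn(a_s)(\inn g)^s=0$ in the $m$-th homogeneous component of $\ggn$; since $\ggn$ is a domain, a single non-zero term could not vanish, so $|S|\ge2$. Setting $k:=\min S$ and cancelling the regular factor $(\inn g)^k$ leaves $\inn(a_k)=-\inn g\cdot\xi$ for some homogeneous $\xi\in\ggn$. Hence $g\mmu a_k$, with $a_k\ne0$ (because $k\in S$ forces $\nu(a_k)<\infty$) and $\deg(a_k)<\deg(g)$, contradicting the $\nu$-minimality of $g$.

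For direction (b), assume the formula holds for every $f$ yet $g$ is not $\nu$-minimal. There is then a non-zero $f\in\kx$ with $\deg(f)<\deg(g)$ and $g\mmu f$, i.e.\ $\inn g\mid\inn f$ in $\ggn$. Realising this divisibility by a polynomial $h\in\kx$ with $\inn(gh)=\inn f$ gives $\nu(gh-f)>\nu(f)$. Writing the canonical $g$-expansion $h=\sum_jh_jg^j$ with $\deg(h_j)<\deg(g)$, the identity $gh-f=(-f)+\sum_jh_jg^{j+1}$ is itself in canonical $g$-expansion form, so the standing hypothesis yields $\nu(gh-f)\le\nu(-f)=\nu(f)$, a contradiction. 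The one delicate step is the domain property of $\ggn$ that powers the cancellation in (a); this is where the total ordering on the value group of $\nu$ enters decisively.
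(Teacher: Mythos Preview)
The paper does not supply its own proof of this lemma; it is quoted from \cite[Prop.~2.3]{KP} and used as a black box throughout. There is thus nothing in the present paper to compare against, but your argument stands on its own as a correct proof in the main case, and both directions are handled cleanly via the graded algebra.

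One caveat in direction (b): when you choose the witness $f$ with $\deg(f)<\deg(g)$ and $g\mid_\nu f$, the step ``$\inn(gh)=\inn f$ gives $\nu(gh-f)>\nu(f)$'' tacitly assumes $\nu(f)<\infty$. If $\nu$ has non-trivial support containing a non-zero polynomial of degree below $\deg(g)$, then $\inn f=0$ and the divisibility $\inn g\mid 0$ is vacuous, so your contradiction does not fire. This is not really a defect of your argument but of the statement as quoted here: the equivalence can genuinely fail in that degenerate situation (e.g.\ take $\supp(\nu)=x\kx$ with the trivial valuation on $K$, and $g=x^2$; then the formula holds for every $F$, yet $g$ is not $\nu$-minimal because $g\mid_\nu x$). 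In the paper's applications the issue never arises, since $\nu$-minimality is only invoked for valuations admitting MLV key polynomials, hence with trivial support; still, a standalone write-up should either impose $\supp(\nu)=0$ or treat the finite-value case explicitly.
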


A \emph{MacLane-Vaqui\'e (MLV) key polynomial} for $\nu$ is a monic polynomial in $\kx$ which is simultaneously  $\nu$-minimal and $\nu$-irreducible. 

The set of MLV key polynomials for $\nu$ will be denoted $\kpn$. 

By the isomorphism of (\ref{isomL}), only valuations with trivial support may have MLV key polynomials.

A MLV key polynomial is necessarily irreducible in $\kx$.

For any $\phi\in\kpn$, we denote by $\cl{\phi}\subset \kpn$ the subset of all MLV key polynomials which are $\nu$-equivalent to $\phi$.

\begin{definition}\label{rele}
If $\kpn\ne\emptyset$, the \emph{degree}  $\deg(\nu)$ is the minimal degree of a MLV key polynomial for $\nu$.
The following subset of $\gn$ is a subgroup: $$\g_{\nu,\deg(\nu)}=\left\{\nu(a)\mid 0\le \deg(a)<\deg(\nu)\right\}.$$  The index
$\ \erel(\nu)=\left(\gn\colon \g_{\nu,\deg(\nu)}\right)$ \
is the \emph{relative ramification index} of $\nu$.
\end{definition}

%\begin{definition}Let $\nu$ be a valuation on $\kx$ with trivial support such that The \emph{weight} of $w(\nu)$ is the weighted value $\nu(\phi)/\deg(\phi)\in(\gn)_\Q$, where $\phi$ is any key polynomial for $\nu$. If $\kpn=\emptyset$, we agree that $\deg(\nu)=w(\nu)=\infty$. \end{definition}

%\subsection{Description of $\kpn$}\mbox{\null}

Consider the subring  of homogeneous elements of degree zero
$$\Delta=\Delta_\nu=\pset_0/\pset_0^+\subset\ggn.$$ 
There are canonical injective ring homomorphisms: 
$$ k\hooklongrightarrow\Delta\hooklongrightarrow k_{\nu}.$$\vskip.1cm
%In particular, $\Delta$ and $\ggn$ are equipped with a canonical structure of $ k$-algebra. 

We denote  the algebraic closure of $k$ in $\Delta$ by  $$\kappa=\kappa(\nu)\subset\Delta.$$ This is a subfield such that $\kappa^*=\Delta^*$, the multiplicative group of the units of $\Delta$.

\begin{theorem}\cite[Thm.4.4]{KP}\label{empty}
The set $\kpn$ is empty if and only if all homogeneus elements in $\ggn$ are units. Equivalently, $\nu/v$ is commensurable and $\ka=\dm=\km$ is an algebraic extension of $k$.
\end{theorem}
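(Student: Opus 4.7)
The plan is to prove the first equivalence (between $\kpn = \emptyset$ and all non-zero homogeneous elements of $\ggn$ being units) directly, then deduce the second equivalence by exploiting the internal structure of $\ggn$.

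\textbf{First equivalence.} The direction ``all non-zero homogeneous elements are units $\Rightarrow \kpn = \emptyset$'' is immediate: if $\inn\phi$ is a unit then $(\inn\phi)\ggn = \ggn$, which is not prime, so no monic $\phi$ can be $\nu$-irreducible. For the converse, I argue contrapositively. Assuming some non-zero homogeneous element is not a unit, pick $\phi \in \kx$ \emph{monic of minimal degree} such that $\inn\phi$ is not a unit, and show $\phi \in \kpn$. First, $\phi$ is $\nu$-minimal: if $\phi \mmu f$ with $\deg f < \deg\phi$, then $\inn\phi \mid \inn f$ forces $\inn f$ to be a non-unit; normalizing $f$ to be monic preserves this property (leading constants have unit initial terms), contradicting the choice of $\phi$. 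Second, I show $\phi$ is $\nu$-irreducible by verifying that the quotient $\ggn/(\inn\phi)\ggn$ is a graded domain: in fact, every non-zero homogeneous class is a unit. Given such a class $\bar\xi = \inn f + (\inn\phi)\ggn$, expand $f = \sum_{s \ge 0} a_s \phi^s$ with $\deg a_s < \deg\phi$. Lemma \ref{minimal0} applies since $\phi$ is $\nu$-minimal, yielding $\nu(f) = \min_s \nu(a_s\phi^s)$ and therefore
$$\inn f \;=\; \sum_{s \in S_f} \inn{a_s}\,(\inn\phi)^s, \qquad S_f := \{s : \nu(a_s\phi^s) = \nu(f)\}.$$
Modulo $(\inn\phi)\ggn$, only the $s = 0$ term survives; since $\bar\xi \ne 0$, we must have $0 \in S_f$, so $\bar\xi$ equals the class of $\inn{a_0}$. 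The minimality of $\phi$ forces $\inn{a_0}$ to be a unit in $\ggn$ (because $\deg a_0 < \deg\phi$), hence $\bar\xi$ is a unit in the quotient. Thus $\ggn/(\inn\phi)\ggn$ is a domain, $(\inn\phi)\ggn$ is a non-zero prime ideal, and $\phi \in \kpn$.

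\textbf{Second equivalence.} When $\p \ne 0$, $\nu/v$ is commensurable by the observation recalled earlier, and the isomorphism (\ref{isomL}) identifies $\ggn$ with the graded algebra of a valuation on the field $\ka(\p)$, which is automatically a graded field; the finiteness of $\ka(\p)/K$ yields $\dm = \km$ algebraic over $k$, hence $\ka = \dm$. When $\p = 0$, assume all non-zero homogeneous elements of $\ggn$ are units. Then the injection $\dm \hookrightarrow \km$ is surjective, since any element of $\km$, represented by a fraction $g/h$ with $g, h \in \kx$, $\nu(g) = \nu(h)$, $h \ne 0$, lifts to $\inn g \cdot (\inn h)^{-1} \in \dm$ (using that $\inn h$ is a unit); so $\dm = \km$. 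For commensurability and algebraicity I argue by contradiction: if some $\alpha \in \gn$ had no multiple in $\g$, a monic polynomial of minimal degree whose $\nu$-value sits in a non-trivial coset of $\gn/\g$ would, via the method of the first equivalence, produce a MLV key polynomial; if $\dm$ contained an element transcendental over $k$, a monic lift of a suitable $k$-irreducible polynomial would likewise yield a MLV key polynomial. Either contradicts the first equivalence. The reverse implication follows by the same contrapositive: under commensurability of $\nu/v$ and algebraicity of $\dm = \km$ over $k$, any hypothetical non-unit homogeneous element would, via the first equivalence, produce a key polynomial forcing a fresh element in $\gn/\g$ or in $\dm/k$, a contradiction.

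The main obstacle is verifying that $(\inn\phi)\ggn$ is prime in the first equivalence: the $\phi$-expansion argument must be executed carefully, relying on $\nu$-minimality of $\phi$ to guarantee that every non-zero homogeneous class in the quotient is a unit. Once this computation is in hand, the second equivalence follows from the structural consequences of the graded-field property.
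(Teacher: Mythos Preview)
The paper does not prove this statement; it is quoted from \cite[Thm.~4.4]{KP} with no argument given. So there is no in-paper proof to compare against, and I can only comment on the correctness of your attempt.

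Your first equivalence is sound. Choosing a monic $\phi$ of least degree with $\inn\phi$ a non-unit, establishing $\nu$-minimality, and then using the $\phi$-expansion (via Lemma~\ref{minimal0}) to show that every nonzero homogeneous class in $\ggn/(\inn\phi)\ggn$ is represented by some $\inn a_0$ with $\deg a_0<\deg\phi$, hence a unit, is a clean and complete proof that $(\inn\phi)\ggn$ is prime.

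The second equivalence, however, has real gaps. Your argument that $\Delta=k_\nu$ is fine. But for commensurability you assert that a monic $\phi$ of least degree with $\nu(\phi)\notin\gq$ ``would, via the method of the first equivalence, produce a MLV key polynomial''. That method begins from a \emph{non-unit}, and you have not explained why such a $\phi$ yields one. It does, but the missing step must be supplied: for any $h=\sum_s a_s\phi^s$ with $\deg a_s<\deg\phi$, the values $\nu(a_s\phi^s)=\nu(a_s)+s\,\nu(\phi)$ lie in pairwise distinct cosets modulo $\gq$ (since $\nu(a_s)\in\gq$ by minimality of $\deg\phi$), hence are pairwise distinct; thus $\nu(h)=\min_s\nu(a_s\phi^s)$, so $\phi$ is $\nu$-minimal by Lemma~\ref{minimal0} and $\inn\phi$ is not a unit. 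Without this, your sentence is an assertion, not an argument. The algebraicity claim is vaguer still: ``a monic lift of a suitable $k$-irreducible polynomial'' is never specified. A direct route is available from what the paper already records just before the theorem, namely $\ka^*=\Delta^*$: if every nonzero homogeneous element is a unit then $\Delta\setminus\{0\}=\Delta^*=\ka^*$, whence $\Delta=\ka$ is algebraic over $k$. Finally, your reverse implication (commensurable and $\Delta$ algebraic $\Rightarrow$ all units) asserts that a key polynomial ``forces a fresh element in $\gn/\g$ or in $\Delta/k$''; this is precisely the content of Theorems~\ref{incomm} and~\ref{comm}, which are independent citations from \cite{KP} and which you should invoke explicitly rather than paraphrase.
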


%If $\nu/v$ is incommensurable, then $\nu$ admits key polynomials, all of them improper.

\begin{theorem}\cite[Thm.4.2]{KP}\label{incomm}
Suppose $\nu/v$ incommensurable. Let $\phi\in\kx$ be a monic polynomial of minimal degree satisfying $\nu(\phi)\not\in\gq$. Then,
$\phi$ is a MLV key polynomial for $\nu$, and $\kpn=\cl{\phi}$.
%In particular, all key polynomials for $\nu$ are improper.

In this case, $\ka=\dm=\km$ is a finite extension of $k$.
%In particular, $\ggn$ has a unique homogeneous prime ideal $\inn(\phi)\ggn$. 
\end{theorem}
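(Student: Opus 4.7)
The core idea is that in the incommensurable setting, the polynomial $\phi$ together with the degree-$<m$ polynomials completely controls the graded algebra $\ggn$, where $m=\deg\phi$. I begin with two preliminary observations. First, any nonzero $f \in \kx$ with $\deg f < m$ satisfies $\nu(f) \in \gq$: writing $f = c\,\tilde f$ with $c \in K^*$ and $\tilde f$ monic of the same degree, minimality of $m$ forces $\nu(\tilde f) \in \gq$, hence $\nu(f) = v(c)+\nu(\tilde f) \in \gq$. Second, $\phi$ is irreducible in $\kx$: a proper factorization into monic factors $\phi = \phi_1\phi_2$ would give $\nu(\phi) = \nu(\phi_1) + \nu(\phi_2)$, and since $\gq$ is a subgroup, some $\nu(\phi_i)$ lies outside $\gq$, producing a monic polynomial of smaller degree with value outside $\gq$, contradicting the minimality of $m$.

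Next I study $\phi$-expansions $f = \sum_{s\geq 0} a_s\phi^s$ with $\deg a_s < m$. Since $\nu(a_s)\in\gq$ while $\nu(\phi)$ has infinite order in $\gn/(\gn\cap\gq)$, the values $\nu(a_s\phi^s)$ for nonzero $a_s$ lie in pairwise distinct cosets of $\gq$ in $\gn$, hence are pairwise distinct, so $\nu(f) = \min_s \nu(a_s\phi^s)$ is uniquely attained at some index $s_0(f)$. By Lemma \ref{minimal0} this proves $\phi$ is $\nu$-minimal, and in the graded algebra one has $\inn(f) = \inn(a_{s_0(f)})\, T^{s_0(f)}$ where $T := \inn(\phi)$. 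Setting $A := \bigoplus_{\alpha \in \gn\cap\gq}\pset_\alpha/\pset_\alpha^+ \subset \ggn$, this identification gives a graded ring isomorphism $\ggn \cong A[T]$ with $T$ of degree $\nu(\phi)\notin\gq$. Moreover, if $a,b \in \kx_{<m}$ and $ab = q\phi+r$ is the Euclidean division by $\phi$ (so $\deg r<m$), then $\nu(q\phi)\notin\gq$ while $\nu(ab),\nu(r)\in\gq$ force $\nu(ab) = \nu(r) < \nu(q\phi)$; this yields a graded ring isomorphism $A \cong \op{gr}_{\tilde\nu}(L)$, where $L = \kx/(\phi)$ is a field and $\tilde\nu$ is the induced valuation. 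Since $\op{gr}_{\tilde\nu}(L)$ is a graded field, so is $A$.

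With this structural description everything else falls out. Homogeneous elements of $A[T]$ are monomials $aT^s$ (since distinct powers of $T$ live in distinct cosets of $\gn\cap\gq$), so the only proper graded prime of $\ggn$ is $(T)=(\inn\phi)$; this proves $\phi$ is $\nu$-irreducible, hence a MLV key polynomial. For any other $\psi \in \kpn$, the same reasoning forces $(\inn\psi)=(T)$; since $\deg\psi<m$ would make $\inn\psi\in A$ a unit, and $\deg\psi>m$ would give $\psi\mid_\nu\phi$ in violation of $\nu$-minimality of $\psi$, we get $\deg\psi=m$. Unpacking $\inn\psi = \inn(a_1)\,T$ with $a_1\in K^*$ and using $\psi$ monic then yields $\psi=\phi+a_0$ with $\nu(a_0)>\nu(\phi)$, so $\psi \sim_\nu \phi$ and $\kpn=\cl{\phi}$. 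Finally, the homogeneous localization $\op{gr}_\nu(K(x))$ equals $A[T,T^{-1}]$ (the remaining homogeneous elements being already units in the graded field $A$), whose degree-$0$ part is $A_0=\Delta$ by the same coset argument; hence $k_\nu=\Delta$, and since $\Delta\cong k_{\tilde\nu}$ is finite over $k$ (as $[L:K]=m$), it is algebraic over $k$, giving $\ka=\Delta=k_\nu$.

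The main technical point will be establishing the isomorphism $A\cong\op{gr}_{\tilde\nu}(L)$ cleanly: one must verify that multiplication in $A$ inherited from $\ggn$ matches multiplication of remainders modulo $\phi$ in $L$, and the coset calculation $\nu(q\phi)>\nu(r)$ shown above is exactly what powers this compatibility.
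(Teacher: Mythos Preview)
The paper does not supply its own proof of this statement; it is quoted from \cite[Thm.~4.2]{KP} without argument. Your proof is correct and self-contained. The decisive structural observation---that distinct powers of $T=\inn\phi$ have degrees in distinct cosets modulo the commensurable part of $\gn$, so every homogeneous element of $\ggn$ is a monomial $aT^s$ with $a$ a unit in the graded field $A\cong\gr_{\tilde\nu}(L)$---immediately yields that $(T)$ is the unique nonzero proper homogeneous prime, from which $\nu$-irreducibility of $\phi$, the description of $\kpn$, and the identification $\kappa=\Delta=k_\nu=k_{\tilde\nu}$ all follow cleanly.

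One small notational wrinkle worth tightening: the phrase ``cosets of $\gq$ in $\gn$'' is a mild abuse, since $\gq=\g\otimes\Q$ is not literally a subgroup of $\gn$ in the incommensurable case. What you actually use (correctly) is cosets of the subgroup $\{\al\in\gn\mid n\al\in\g\ \mbox{for some }n\ge1\}$, i.e.\ the preimage of the torsion of $\gn/\g$; equivalently one works inside $(\gn)_\Q$. Also, in the step ``$\nu(q\phi)\notin\gq$'' you should note explicitly that $\deg q<m$ (which holds since $\deg(ab)<2m$), so that $\nu(q)\in\gq$. These are cosmetic; the logic is sound.
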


%\subsection{Description of $\kpn$ in the commensurable case}

 %In this case, the graded algebra is isomorphic to the group algebra $\ka[\gn]$.   

%By Lemma \ref{groupchain0}, $\gn=\gen{\g_{\nu,m},\nu(\phi)}$. 

\begin{theorem}\cite[Thms.4.5,4.6]{KP}\label{comm}
Suppose that $\nu/v$ is commensurable and $\kpn\ne\emptyset$.  
Let $\phi$ be a MLV key polynomial for $\nu$ of minimal degree $m$. 

Let $e=\erel(\nu)$. Let $a\in\kx$ be a polynomial of degree less than $m$ with $\nu(a)=e\nu(\phi)$, and let $u=\inn a\in\ggn^*$. Then, 
$$
\xi=(\inn \phi)^eu^{-1}\in\Delta
$$
is transcendental over $k$ and satisfies $\Delta=\kappa[\xi]$.

Moreover, the canonical embedding $\Delta\hookrightarrow \km$ induces an isomorphism $\ka(\xi)\simeq \km$. 
\end{theorem}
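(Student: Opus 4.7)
The plan is to prove the three assertions in sequence: (i) transcendence of $\xi$ over $k$; (ii) the identity $\Delta=\kappa[\xi]$; (iii) the residue field isomorphism $\kappa(\xi)\simeq\km$.

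First, $(\inn\phi)^e$ and $u$ are both homogeneous of graded degree $e\nu(\phi)=\nu(a)$, so $\xi\in\Delta$. For (i), if $\xi$ were algebraic over $k$ then $\xi\in\kappa$, and since $\kappa^*=\Delta^*$, $\xi$ would be a unit in $\ggn$. The identity $(\inn\phi)^e=u\xi$ would then force $\inn\phi$ itself to be a unit, contradicting the $\nu$-irreducibility of $\phi$, which says $(\inn\phi)\,\ggn$ is a proper prime ideal.

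For (ii), take any $\inn g\in\Delta$ with $\nu(g)=0$ and apply the $\phi$-expansion $g=\sum_s b_s\phi^s$, $\deg b_s<m$. Lemma \ref{minimal0} gives
$$
\inn g=\sum_{s\in S}\inn(b_s)\,(\inn\phi)^s,\qquad S=\{s:\nu(b_s\phi^s)=0\}.
$$
For $s\in S$, the value $\nu(b_s)=-s\nu(\phi)$ lies in $\g_{\nu,\deg(\nu)}$ because $\deg b_s<m$. Lemma \ref{minimal0} also implies $\gn=\g_{\nu,\deg(\nu)}+\Z\nu(\phi)$, so $\gn/\g_{\nu,\deg(\nu)}$ is cyclic of order $e$, generated by the class of $\nu(\phi)$; hence $e\mid s$. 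Writing $s=ej$ and substituting $(\inn\phi)^e=u\xi$ yields
$$
\inn(b_s)(\inn\phi)^s=\inn(b_s a^j)\,\xi^j.
$$
The proof then reduces to the sublemma that every $\inn c$ with $\deg c<m$ and $\nu(c)=0$ belongs to $\kappa$. Granting the sublemma, iterated $\phi$-expansion of $b_s a^j$ (using $(\inn\phi)^e=u\xi$ to absorb surviving $\phi$-factors) expresses $\inn g$ as an element of $\kappa[\xi]$.

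The sublemma is the main obstacle. The idea is that the minimality of $\deg\phi$ among MLV key polynomials prevents $\inn c$ from being divisible by $\inn\phi'$ for any $\phi'\in\kpn$, since $\deg\phi'\ge m>\deg c$ and $\phi'$ is $\nu$-minimal. Combined with a description of the relevant homogeneous primes of $\ggn$ as being generated by initial terms of MLV key polynomials, this forces $\inn c$ to be a unit, and then Theorem \ref{empty}, applied to the graded structure induced on the polynomials of degree $<m$, pins down $\inn c$ as algebraic over $k$, hence in $\kappa$. The truly delicate case is $e=1$, where $\nu(\phi)\in\g_{\nu,\deg(\nu)}$ and the product of two degree-$<m$ value-$0$ polynomials can produce a $\phi$-term of $\nu$-value $0$; these $\phi$-terms must be absorbed into $\xi$-factors by iterated substitution, controlled by a degree induction on $g$.

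Finally, (iii) follows directly from (i) and (ii): as $\xi$ is transcendental over $\kappa$, $\Delta=\kappa[\xi]$ is a polynomial ring, with fraction field $\kappa(\xi)$, and the embedding $\Delta\hookrightarrow\km$ extends to $\kappa(\xi)\hookrightarrow\km$. Surjectivity: any class in $\km$ equals $\inn f/\inn g$ for some $f,g\in\kx$ with $\nu(f)=\nu(g)$, and both $\inn f$ and $\inn g$ lie in $\kappa[\xi]$ by (ii).
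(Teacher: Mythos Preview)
Note first that this theorem is not proved in the present paper: it is quoted from \cite{KP}, so there is no proof here against which to compare your argument.

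On the substance of your sketch, part (iii) has a real gap. You assert that for $f,g\in\kx$ with $\nu(f)=\nu(g)$ ``both $\inn f$ and $\inn g$ lie in $\kappa[\xi]$ by (ii)''. But (ii) describes only $\Delta$, the degree-zero homogeneous piece; when $\nu(f)=\nu(g)\ne 0$ neither $\inn f$ nor $\inn g$ lies in $\Delta$, let alone in $\kappa[\xi]$. A correct fix: since $e\,\gn\subset\g_{\nu,\deg(\nu)}$, choose $c\in\kx$ with $\deg c<m$ and $\nu(c)=e\,\nu(g)$, and then $c'\in\kx$ with $\inn c\cdot\inn c'=1$ (possible because $\inn c$ is a unit). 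The polynomials $fg^{e-1}c'$ and $g^{e}c'$ lie in $\kx$ with $\nu$-value $0$, so their initial terms belong to $\Delta=\kappa[\xi]$ by (ii), and the residue of $f/g$ in $\km$ is their quotient, hence in $\kappa(\xi)$.

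Part (ii) is correct in spirit but needlessly convoluted. After writing $\inn(b_s)(\inn\phi)^s=\inn(b_s)\,u^{j}\xi^{j}$ with $s=ej$, you worry that $b_sa^{j}$ may have degree $\ge m$ and propose an iterated $\phi$-expansion. There is nothing to iterate: $\inn(b_s)$ is already a unit in $\ggn$ because $\deg b_s<m$ (this is the standard fact underlying your sublemma, cf.\ \cite[Prop.~3.5]{KP}), and $u$ is a unit by hypothesis, so $\inn(b_s)u^{j}$ is a homogeneous unit of degree zero and therefore lies in $\Delta^*=\kappa^*$ by the very definition of $\kappa$. Your appeal to Theorem~\ref{empty} on an unspecified ``graded structure induced on polynomials of degree $<m$'' is neither needed nor well-posed; once you know $\inn c$ is a unit of degree zero, membership in $\kappa^*$ is immediate. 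The ``truly delicate case $e=1$'' is no more delicate than the general one.
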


These comensurable extensions $\nu/v$ admitting MLV key polynomials are called \emph{residually transcendental} valuations on $\kx$.

The pair $\phi,\, u$ determines a  (non-canonical) \emph{residual polynomial operator} 
$$
R=R_{\nu,\phi,u}\colon\;\kx\lra \kappa[y],
$$
whose images are monic polynomials in the indeterminate $y$, which are not divisible by $y$ \cite[Sec.5]{KP}. 
This operator facilitates a complete description of the set $\kpn$.

\begin{theorem}\cite[Prop.6.3]{KP}\label{charKP}
Suppose that $\nu/v$ is commensurable and $\kpn\ne\emptyset$. Let $\phi$ be a MLV key polynomial for $\nu$ of minimal degree $m$. 
A monic $\chi\in\kx$ is a key polynomial for $\nu$ if and  only if either
\begin{enumerate}
\item[(1)] $\deg(\chi)=m$ \,and\; $\chi\snu\phi$, or
\item[(2)] $\deg(\chi)=me\deg(R(\chi))$ \,and\; $R(\chi)$ is irreducible in $\kappa[y]$.
\end{enumerate}
Moreover, $\chi,\,\chi'\in\kpn$ are $\nu$-equivalent if and only if $R(\chi)=R(\chi')$. In this case, $\deg(\chi)=\deg(\chi')$.
\end{theorem}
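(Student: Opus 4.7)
The plan is to translate the $\nu$-minimality and $\nu$-irreducibility conditions on a monic $\chi$ into explicit conditions on its residual polynomial, exploiting the identification $\Delta=\kappa[\xi]$ provided by Theorem \ref{comm}. The case $\deg(\chi)<m$ is ruled out by minimality of $m$. For $\deg(\chi)=m$ one writes $\chi=\phi+a$ with $\deg(a)<m$ and examines the three possibilities for $\nu(a)$ versus $\nu(\phi)$: the inequality $\nu(a)<\nu(\phi)$ would give $\inn\chi=\inn a$, hence $\chi\mmu a$ with $\deg(a)<\deg(\chi)$, contradicting $\nu$-minimality of $\chi$; the equality $\nu(a)=\nu(\phi)$ is impossible when $e>1$ since $\nu(\phi)\notin\g_{\nu,m}$ while $\nu(a)\in\g_{\nu,m}$, and is ruled out when $e=1$ by comparing the prime ideals generated by $\inn\chi$ and $\inn\phi$ in $\ggn$ together with the monicity of both polynomials; hence $\nu(a)>\nu(\phi)$, which yields $\chi\snu\phi$ and case (1).

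For $\deg(\chi)>m$ I consider the $\phi$-expansion $\chi=\sum_{s=0}^n a_s\phi^s$ with $\deg(a_s)<m$. By Lemma \ref{minimal0}, $\nu$-minimality of $\chi$ forces $\nu(\chi)=\min_s\nu(a_s\phi^s)$; let $S$ be the set of indices attaining this minimum, $s_0=\min S$, $s_1=\max S$. Since $\nu(\phi)$ has order $e$ in $\gn/\g_{\nu,m}$, indices in $S$ differ by multiples of $e$, so $s_1-s_0=ed$ for some $d\ge 0$. Using the relation $(\inn\phi)^e=u\xi$ one extracts a factorization
$$
\inn\chi=(\inn a_{s_0})(\inn\phi)^{s_0}\,R(\chi)(\xi),
$$
where $R(\chi)\in\kappa[y]$ is monic of degree $d$; this is exactly the defining construction of the residual polynomial operator. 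Since $\inn a_{s_0}$ is a unit in $\ggn$ and $\inn\phi$ is prime of degree $m<\deg(\chi)$, primality of $\inn\chi$ forces $s_0=0$ (otherwise $\phi\mmu\chi$ contradicts $\nu$-minimality of $\chi$) and forces $R(\chi)(\xi)$ to be irreducible in $\kappa[\xi]$; by transcendence of $\xi$ over $\kappa$ this is equivalent to irreducibility of $R(\chi)$ in $\kappa[y]$. With $s_0=0$, tracking the leading term shows $n=s_1$ and $a_{s_1}=1$, whence $\deg(\chi)=m s_1=med=me\deg(R(\chi))$, giving case (2). The converse implications (that (1) or (2) produces a key polynomial) follow by reversing these steps.

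For the final clause, $\chi\snu\chi'$ is equivalent to $\inn\chi=\inn\chi'$, and via the factorization above (with $s_0=0$ in both cases) this transfers to $R(\chi)(\xi)=R(\chi')(\xi)$ in $\kappa[\xi]$; transcendence of $\xi$ over $\kappa$ then yields $R(\chi)=R(\chi')$ in $\kappa[y]$, and the degree formula $\deg=me\deg R$ gives $\deg(\chi)=\deg(\chi')$. The main obstacle I expect is the careful derivation of the displayed factorization: one must verify that the normalized coefficients $(\inn a_{s_0+ej})(\inn a_{s_0})^{-1}u^j$ appearing in the sum over $S$ lie in the subfield $\kappa\subset\Delta$, not merely in $\Delta_0$. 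This amounts to matching the algebraic extension of $k$ generated by these reductions with the algebraic closure of $k$ inside $\Delta$, which is the technical heart of the construction of $R_{\nu,\phi,u}$.
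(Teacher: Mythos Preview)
The paper does not prove this theorem: it is quoted verbatim from \cite[Prop.6.3]{KP} and used as a black box, so there is no ``paper's own proof'' to compare against. Your outline is broadly the right strategy (translate $\nu$-minimality and $\nu$-irreducibility into conditions on $R(\chi)$ via the $\phi$-expansion), but there is a genuine gap in your treatment of the case $\deg(\chi)=m$.

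You claim that when $e=1$ the equality $\nu(a)=\nu(\phi)$ ``is ruled out by comparing the prime ideals generated by $\inn\chi$ and $\inn\phi$ \dots\ together with the monicity of both polynomials'', and you then conclude that every degree-$m$ key polynomial falls under case~(1). This is false. When $e=1$ and $\nu(a)=\nu(\phi)$, write $\inn a=cu$ with $c\in\kappa^*$ (this uses that $\inn a$ is a unit of degree $\nu(\phi)$ and that $u$ has the same degree); then $\inn\chi=\inn\phi+\inn a=u(\xi+c)$, which generates the prime ideal $(\xi+c)\Delta$ with $c\ne0$. So $\chi$ \emph{is} a key polynomial, it is \emph{not} $\nu$-equivalent to $\phi$ (since $(\xi+c)\ne(\xi)$), and it satisfies condition~(2) with $\deg(R(\chi))=1$. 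Concretely, take $K=\Q_p$ with the Gauss valuation and $\phi=x$: both $x$ and $x+1$ are key polynomials of degree one, but $x+1\not\snu x$. Your case split must therefore allow $\deg(\chi)=m$ to land in case~(2) as well; the correct organization is to run the $\phi$-expansion argument uniformly for all $\deg(\chi)\ge m$, and observe that case~(1) is exactly the degenerate situation $s_1=0$, i.e.\ $R(\chi)=1$, which occurs precisely when $\chi\snu\phi$.

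A smaller point: in the ``moreover'' clause you pass from $\inn\chi=\inn\chi'$ to $R(\chi)(\xi)=R(\chi')(\xi)$, but your displayed factorization carries the unit prefactor $\inn a_0$, which need not coincide for $\chi$ and $\chi'$. You must argue that equality of the homogeneous elements forces equality of both the unit and the $\kappa[\xi]$-factor separately (e.g.\ by comparing grades, or by using that $R$ is normalized to be monic with nonzero constant term so that the unit is recovered as the coefficient of $\xi^0$).
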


The set $\kpn/\!\sim_\nu$ is in canonical bijection with the maximal spectrum of $\Delta$ \cite[Thm.6.7]{KP}. 

Since the choice of a pair $\phi,u$ as above determines an isomorphism $\Delta\simeq \ka[y]$, it induces a (non-canonical) bijection between 
$\kpn/\!\sim_\nu$ and the set of monic irreducible polynomials in $\ka[y]$.

\subsection{Chains of valuations}\label{subsecChains}
For any valuation $\mu$ on $\kx$ taking values in a subgroup of $\gn$, we say that $$\mu\le\nu\quad\mbox{ if }\quad\mu(f)\le\nu(f),\qquad \forall\,f\in\kx.$$ 

Suppose that $\mu<\nu$.  Let $\phmn$ be the set of all monic polynomials $\phi\in\kx$ of minimal degree among those satisfying $\mu(\phi)<\nu(\phi)$.

By a well known result of MacLane-Vaqui\'e \cite[Lem.1.15]{Vaq}, any $\phi\in\phmn$ is a MLV key polynomial for $\mu$ and satisfies
$$
\mu(f)=\nu(f)\sii \phi\nmid_\mu f,\qquad \forall\,f\in\kx.
$$

Actually, $\phmn$ is an element in $\kpm/\!\smu$. That is, $\phmn=[\phi]_\mu$, for all $\phi\in\phmn$ \cite[Cor.2.6]{MLV}. 
We define $$\deg\left(\phmn\right)=\deg(\phi)\quad \mbox{ for any }\quad \phi\in\phmn.$$

If we have a chain $\mu<\eta<\nu$ of valuations, we have $\phmn=\Phi_{\mu,\eta}$ \cite[Cor.2.6]{MLV}. In particular,
\begin{equation}\label{transitive}
\mu(f)=\nu(f)\sii \mu(f)=\eta(f),\qquad  \forall\,f\in\kx.
\end{equation}

\section{Abstract key polynomials}\label{secAKP}

Consider a valuation $\nu$ on $\kx$. 

Abstract key polynomials for $\nu$ were introduced by J. Decaup, W. Mahboub and M. Spivakovsky in \cite{Dec}
as an intrinsic characterization of the members of a \emph{complete system of key polynomials} defined by F.J. Herrera Govantes, W. Mahboub, M.A. Olalla Acosta and M. Spivakovsky in \cite{hmos}. %This idea was developed by Novacoski and Spivakovsky in \cite{NS2018,NS2019}, where they proved some further properties of key polynomials.

In this section, we review some of these results. Our aim is to find exactly what MLV key polynomials of the intermediate valuations $\mu_i$ are abstract key polynomials for $\nu$, completing in this way some partial results from \cite[Sec.3]{Dec}.     

%Also, we obtain some results on certain invariants attached to abstract key polyomials.

\subsection{Invariants of polynomials with respect to the valuation $\nu$}

We denote by $\N$ the set of positive integers.
For any $b\in \N$, consider the linear differential operator $\partial_b$ on $\kx$, defined by Taylor's formula:
$$
f(x+y)=\sum_{0\le b}\pb fy^b, \quad\ \forall\,f\in\kx,
$$
where $y$ is another indeterminate. Note that%We agree that $\partial f=\prt1f$. Note that
$$
\pb{x^n}=\comb{n}{b}x^{n-b},\quad\ \forall\, n\in\N,
$$
if we agree that $\comb{n}{b}=0$ whenever $n<b$.

Let $f\in\kx$ be a polynomial of positive degree.  Denote 
$$
\ml(f)=\mbox{least $b\in\N$ such that }\prt{b}f\ne0. 
$$

Clearly, $\ml(f)=1$ if $\chr(K)=0$. 
If $\chr(K)=p$, then $\ml(f)=p^r$ is the largest power of $p$ such that $f$ belongs to $K[x^{p^r}]$.

This integer $\ml(f)$ is an intrinsic datum of $f$. We are interested in some data that may be attached to $f$ in terms of the valuation $\nu$.

\begin{definition}
Let $f\in\kx\setminus K$ such that $\nu(f)<\infty$. We define
$$
\ep(f)=\max\left\{\dfrac{\nu(f)-\nu(\pb{f})}b\ \Big|\ b\in\N\right\}\in\left(\gn\right)_\Q.
$$
If $\nu(f)=\infty$, we define $\ep(f)=\infty$. 
\end{definition}

In particular, if $\nu(f)<\infty$ we have
\begin{equation}\label{epsdef}
 \nu(\pb{f})\ge\nu(f)-b\,\ep(f),\quad\ \forall\, b\ge0,
\end{equation}
and we define $I(f)\subset \N$ to be the set of positive integers for which equality holds.% in (\ref{epsdef}). 

If $\nu(f)=\infty$ and $f$ is irreducible, we agree that $I(f)=\{\ml(f)\}$. 
Otherwise, the set  $I(f)$ is not defined.\e

\noindent{\bf Examples. }\e

\begin{itemize}
\item If $\deg(f)=1$, then $\ep(f)=\nu(f)$ and $I(f)=\{1\}$. \e
\item If $a\in K^*$, then $\ep(af)=\ep(f)$ and $I(af)=I(f)$.\e
\item If $f$ is monic and $b=\deg(f)$, then $\pb{f}=1$. Hence, $\nu(f)/\deg(f)\le\ep(f)$.\e
\item If $b\not\in[\ml(f),\deg(f)]$, then $\pb{f}=0$. Hence, $I(f)\subset[\ml(f),\deg(f)]$. \e
\item For $a\in K$ and $f=(x-a)^n$, we have $\ep(f)=\nu(x-a)$ and
$$
\begin{array}{ll}
I(f)=[1,n]\cap\N,&\quad\mbox{if }\ \chr(k)=0,\\ I(f)=\left\{b\in [1,n]\cap\N\ \Big|\ p\nmid \comb{n}{b}\right\},&\quad\mbox{if }\ \chr(k)=p.
\end{array}
$$
\end{itemize}

%Let us display a short list of basic properties of this invariant.\e

Novacoski and Spivakovski found an interesting interpretation of $\ep(f)$ in \cite{NS2019}.

\begin{proposition}\label{r=maxI}
Let $f\in \kx$ be a monic polynomial such that $\nu(f)<\infty$. Let $\op{Z}(f)\subset\overline{K}$ be the multiset of roots of $f$ in an algebraic closure of $K$. 

For any extension $\bar{\nu}$ of $\nu$ to $\overline{K}[x]$, we have
\begin{equation}\label{ep=max}
\ep(f)=\max\{\bar{\nu}(x-\t)\mid \t\in \op{Z}(f)\}.
\end{equation}

Moreover, the multiplicity of $\ep(f)$ in the multiset $\{\bar{\nu}(x-\t)\mid \t\in \op{Z}(f)\}$ is equal to $\max(I(f))$.
\end{proposition}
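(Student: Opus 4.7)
The plan is to work over an algebraic closure of $K$, where $f$ splits as $f=\prod_{i=1}^n(x-\t_i)$, and to reduce both claims of the proposition to a single calculation with elementary symmetric polynomials. Expanding $f(x+y)=\prod_{i=1}^n\bigl((x-\t_i)+y\bigr)$ in powers of $y$ identifies
\[
\pb{f}\;=\;\sum_{\substack{T\subset\{1,\ldots,n\}\\ |T|=n-b}}\prod_{i\in T}(x-\t_i),
\]
that is, $\pb{f}$ is the $(n-b)$-th elementary symmetric function evaluated at the linear forms $x-\t_i$. This is the key formula that translates formal-derivative bookkeeping into valuations of the roots.

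With this formula in hand, I would order the roots so that $\bar{\nu}(x-\t_1)\ge\cdots\ge\bar{\nu}(x-\t_n)$, denote this maximum by $M$, and let $m$ be its multiplicity, so that $\bar{\nu}(x-\t_m)=M>\bar{\nu}(x-\t_{m+1})$ (the strict inequality being vacuous when $m=n$). The ultrametric inequality applied to the above expansion yields $\bar{\nu}(\pb{f})\ge\sum_{i=b+1}^n\bar{\nu}(x-\t_i)$, since the $n-b$ smallest of the values $\bar{\nu}(x-\t_i)$ are those indexed by $i>b$. Subtracting from $\bar{\nu}(f)=\sum_i\bar{\nu}(x-\t_i)$ gives $(\nu(f)-\nu(\pb{f}))/b\le\frac{1}{b}\sum_{i=1}^b\bar{\nu}(x-\t_i)\le M$ for every $b\in\N$, and the last inequality is strict whenever $b>m$.

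To conclude, I would show the bound is sharp precisely at $b=m$: the minimum of $\sum_{i\in T}\bar{\nu}(x-\t_i)$ over subsets $T$ of size $n-m$ is \emph{uniquely} attained at $T=\{m+1,\ldots,n\}$, so the ultrametric inequality is promoted to an equality, forcing $(\nu(f)-\nu(\partial_m f))/m=M$. This simultaneously gives $\ep(f)=M$, i.e.\ the equality (\ref{ep=max}), and $m\in I(f)$. Combined with the strict bound $(\nu(f)-\nu(\pb{f}))/b<M$ for $b>m$, this forces $\max(I(f))=m$.

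The main obstacle is exactly this equality step: ultrametric bounds on sums with repeated minima can be strict because of cancellation among the leading terms, so one cannot simply quote the inequality for an arbitrary $b$. Isolating the index $b=m$ is what forces the minimum to be attained only once, which in turn forces equality and also rules out the degenerate case $\pb{f}=0$. Everything else is routine: the passage between $\bar{\nu}$ and $\nu$ is automatic because $\pb{f}\in\kx$, and the independence of the statement from the chosen extension $\bar{\nu}$ emerges as a byproduct, since $\ep(f)$ and $\max(I(f))$ only involve the intrinsic valuation $\nu$.
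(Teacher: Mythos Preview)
Your proposal is correct and follows essentially the same argument as the paper: both use the identity expressing $\pb{f}$ as the elementary symmetric function of the linear forms $x-\t_i$, bound each term via the ultrametric inequality, and then isolate the index $b=m$ (the multiplicity of the maximal root value) as the unique place where the minimum is attained by a single subset, forcing equality and yielding both $\ep(f)=M$ and $\max(I(f))=m$. The only cosmetic difference is that you order the roots by decreasing value of $\bar\nu(x-\t_i)$, whereas the paper singles out the index set $J_0$ of roots achieving the maximum; the logic is otherwise identical.
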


\begin{proof}
The equality (\ref{ep=max}) is proved in \cite[Prop.3.1]{NS2019}. We reproduce the proof because we need it to prove the second statement.

Let $\op{Z}(f)=\{\t_1,\dots,\t_n\}$, $\delta=\max\{\bar{\nu}(x-\t_i)\mid 1\le i\le n\}$. Let $r$ be the multiplicity of $\delta$ in this multiset.
For any integer $1\le s\le n$, we have
$$%\begin{equation}\label{Taylorcoeffs}
\prt{s}{f}=\sum_{J}\left(\prod_{i\not\in J}(x-\t_i)\right)\ \imp\ \bar{\nu}\left(\prt{s}{f}\right)\ge\min_J\left\{\sum_{i\not\in J}\bar{\nu}(x-\t_i)\right\},
$$%\end{equation}
where $J$ runs on all subsets of $[1,n]\cap\N$ of cardinality $s$.

For $s=r$, the set $J_0=\{i\in [1,n]\cap\N\mid \bar{\nu}(x-\t_i)=\delta\}$ is the unique subset of cardinality $r$ for which the term $\sum_{i\not\in J_0}\bar{\nu}(x-\t_i)$ takes the minimal value. Hence,
$$
\nu(\prt{r}{f})=\sum_{i\not\in J_0}\bar{\nu}(x-\t_i).
$$
This implies 
$$
\nu(f)-\nu(\prt{r}{f})=\sum_{i\in J_0}\bar{\nu}(x-\t_i)=r\delta.
$$

For any $s\ne r$, let $J$ be one of the subsets of cardinality $s$ for which $\sum_{i\not\in J}\bar{\nu}(x-\t_i)$ takes the minimal value. Then,
\begin{equation}\label{ineq}
\nu(f)-\nu(\prt{r}{f})\le\sum_{i\in J}\bar{\nu}(x-\t_i)\le s\delta.
\end{equation}
This proves that $\ep(f)=\delta$ and $r$ belongs to $I(f)$.

Now, if $s>r$, there is at least one index $i\in J$ for which  $\bar{\nu}(x-\t_i)<\delta$. Hence, we get an strict inequality in (\ref{ineq}). This proves that $s\not\in I(f)$.
\end{proof}%\e

\begin{corollary}\label{maxepfg}
For any two $f,g\in\kx\setminus K$, we have 
\begin{equation}\label{epfg}
\ep(fg)=\max\{\ep(f),\,\ep(g)\}.
\end{equation}
Moreover, if $\ep(f)<\ep(g)<\infty$, then $I(fg)=I(g)$.
\end{corollary}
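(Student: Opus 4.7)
First I would dispose of the edge cases. If $\nu(f)=\infty$ or $\nu(g)=\infty$, then $\nu(fg)=\infty$, and both sides of (\ref{epfg}) equal $\infty$; the hypothesis $\ep(g)<\infty$ of the second statement excludes these cases. Hence I would assume $\nu(f), \nu(g)<\infty$. Since $\ep(cf)=\ep(f)$ and $I(cf)=I(f)$ for every $c\in K^*$, I would further assume $f$ and $g$ are monic, so that Proposition \ref{r=maxI} applies directly.

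For the identity (\ref{epfg}), I would fix an extension $\bar\nu$ of $\nu$ to $\overline{K}[x]$. The multiset of roots of $fg$ in $\overline{K}$ is the disjoint union $\op{Z}(f)\sqcup \op{Z}(g)$, so Proposition \ref{r=maxI} applied to $fg$, $f$, and $g$ yields
$$\ep(fg) \;=\; \max_{\alpha\in \op{Z}(fg)} \bar\nu(x-\alpha) \;=\; \max\{\ep(f),\, \ep(g)\}.$$

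For the ``moreover'' statement, assume $\ep(f)<\ep(g)<\infty$, so that $\ep(fg)=\ep(g)$. Here I would use the Leibniz rule $\pb{fg} = \sum_{s=0}^{b} \prt{s}{f}\, \prt{b-s}{g}$, which follows from writing Taylor's formula for the product. For every $s\ge 1$, applying (\ref{epsdef}) to each factor gives
$$\nu\bigl(\prt{s}{f}\,\prt{b-s}{g}\bigr) \;\ge\; \nu(fg) - b\,\ep(g) + s\bigl(\ep(g)-\ep(f)\bigr) \;>\; \nu(fg) - b\,\ep(g),$$
using $\ep(g)>\ep(f)$ and $s\ge 1$. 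Hence only the $s=0$ term, namely $f\cdot \prt{b}{g}$, can achieve the minimal valuation. If $b\in I(g)$, then $\nu(f\cdot \prt{b}{g})=\nu(fg)-b\,\ep(g)=\nu(fg)-b\,\ep(fg)$ is strictly smaller than every other summand, so $\nu(\pb{fg})$ equals this value and $b\in I(fg)$. If $b\notin I(g)$, every summand has valuation strictly larger than $\nu(fg)-b\,\ep(fg)$, forcing $\nu(\pb{fg})>\nu(fg)-b\,\ep(fg)$ and hence $b\notin I(fg)$.

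The crucial point to get right is that the gap $\ep(g)>\ep(f)$ must be \emph{strict} for the $s\ge 1$ contributions to be dominated by the $s=0$ term; otherwise cancellations in the Leibniz sum could occur, consistent with the fact that $I(fg)=I(g)$ need not hold when $\ep(f)=\ep(g)$.
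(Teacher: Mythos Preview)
Your proof is correct and follows essentially the same approach as the paper: both deduce (\ref{epfg}) from Proposition \ref{r=maxI} via $\op{Z}(fg)=\op{Z}(f)\sqcup\op{Z}(g)$, and both prove the second statement by applying the Leibniz rule and (\ref{epsdef}) to show that the $s=0$ term dominates. Your treatment is slightly more careful in dispatching the $\nu(f)=\infty$ case and the reduction to monic polynomials, which the paper leaves implicit.
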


\begin{proof}
The equality  (\ref{epfg}) follows immediately from Proposition \ref{r=maxI}.

Suppose  $\ep(f)<\ep(g)$, and denote $\ep=\ep(g)=\ep(fg)$. For any $b\in \N$,
$$
\pb{fg}=\sum_{j=0}^b\prt{j}{f}\prt{b-j}{g}\imp\nu\left(\pb{fg}\right)\ge\min\{\nu(\prt{j}{f})+\nu(\prt{b-j}{g})\mid 0\le j\le b\}.
$$

For any index $j>0$ the inequality (\ref{epsdef}) shows that
$$
\nu(\prt{j}{f})+\nu(\prt{b-j}{g})\ge \nu(f)-j\ep(f)+\nu(g)+(b-j)\ep>\nu(fg)-b\,\ep.
$$
For the index $j=0$,
$$
\nu(f)+\nu(\prt{b}{g})\ge \nu(f)+\nu(g)-b\,\ep=\nu(fg)-b\,\ep,
$$
and equality holds if and only if $b\in I(g)$. This proves that $I(fg)=I(g)$.
\end{proof}\e

\begin{remark}\rm
If $\supp(\nu)=f\kx$, then Proposition \ref{r=maxI} still holds for $f$. In fact, there must be a root $\t\in\op{Z}(f)$ such that $\bar{\nu}(x-\t)=\infty$. Then, necessarily  $\supp(\bar{\nu})=(x-\t)\overline{K}[x]$. Hence, the multiplicity of $\infty$ in the multiset $\{\bar{\nu}(x-\t)\mid \t\in \op{Z}(f)\}$ is equal to the multiplicity of $\t$ in the multiset $\op{Z}(f)$, which coincides with  $\ml(f)$ because $f$ is irreducible.
\end{remark}

\subsection{Abstract key polynomials. Basic properties}
Following the criterion of \cite{NS2018}, we drop the adjective ``abstract" and talk simply of key polynomials for the valuation $\nu$.

\begin{definition}
A monic $Q\in\kx$ is a \emph{key polynomial for $\nu$} if for all $f\in\kx$,
 it satisfies
$$
0<\deg(f)<\deg(Q)\ \imp\ \ep(f)<\ep(Q).
$$
\end{definition}

\noindent{\bf Examples}

\begin{itemize}
\item All monic polynomials of degree one are key polynomials for $\nu$.
\item If $\op{supp}(\nu)=\phi\kx$ for a monic $\phi\in\kx$, then $\phi$ is a key polynomial for $\nu$.

On the other hand, we saw in section \ref{subsecMLVKP} that $\kpn=\emptyset$ if $\op{supp}(\nu)\ne0$.
\end{itemize}\e

By Corollary \ref{maxepfg}, all key polynomials are irreducible in $\kx$.

Let $p$ be the \emph{characteristic exponent} of the valued field $(K,v)$. That is, 
$$
p=\begin{cases}
\chr(k),&\mbox{ if }\chr(k)>0,\\
1,&\mbox{ if }\chr(k)=0.
\end{cases}
$$

\begin{proposition}\label{bIp}\cite[Prop.2.4]{NS2018}
If $Q\in\kx$ is a key polynomial, then all elements in $I(Q)$ are a power of the characteristic exponent $p$. 
\end{proposition}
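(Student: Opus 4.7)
The plan is to argue by contradiction: assume $b\in I(Q)$ is not a power of $p$. The case $\nu(Q)=\infty$ is immediate, because then $I(Q)=\{\ml(Q)\}$ by the convention for irreducible polynomials in $\supp(\nu)$, and $\ml(Q)$ is manifestly a power of the characteristic exponent (equal to $1$ in residue characteristic zero, and to a power of $\chr(K)$ otherwise). So assume $\nu(Q)<\infty$; then $\ep(Q)\in(\gn)_\Q$, and the key polynomial property forces every nonzero polynomial of degree less than $\deg(Q)$ to have finite $\nu$-value (otherwise its $\ep$ would be $\infty$, contradicting $\ep(\cdot)<\ep(Q)<\infty$).

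The first real step is a combinatorial lemma: whenever $b$ is not a power of $p$, there exist $a,c\in\N$ with $a+c=b$ and $v\bigl(\tbinom{b}{a}\bigr)=0$. In residue characteristic zero this is trivial (take $a=1$ and use that $v$ is trivial on $\Z$); when $\chr(k)=p>0$, Kummer's theorem identifies $v_p(\tbinom{b}{a})$ with the number of carries in the base-$p$ addition $a+(b-a)=b$, and a non-power-of-$p$ integer $b$ always admits a splitting $b=a+c$ with $0<a,c$ and no carries. Combined with the Taylor composition identity $\partial_a\circ\partial_c=\tbinom{a+c}{a}\,\partial_{a+c}$, this gives $\nu(\partial_bQ)=\nu(\partial_a(\partial_cQ))$, since $\tbinom{b}{a}$ is a $v$-unit in $K$.

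Next, I invoke the key polynomial hypothesis on $\partial_cQ$. Since $0<c<b\le\deg(Q)$, this polynomial has degree strictly between $0$ and $\deg(Q)$; it is nonzero too, for otherwise the composition identity would force $\partial_bQ=0$, contradicting $b\in I(Q)$. The defining property of a key polynomial then yields $\ep(\partial_cQ)<\ep(Q)$. Using this strict inequality, together with $a>0$ and with $\nu(\partial_cQ)\ge\nu(Q)-c\,\ep(Q)$ from the definition of $\ep(Q)$,
\begin{equation*}
\nu(\partial_a\partial_cQ)\;\ge\;\nu(\partial_cQ)-a\,\ep(\partial_cQ)\;>\;\nu(\partial_cQ)-a\,\ep(Q)\;\ge\;\nu(Q)-b\,\ep(Q).
\end{equation*}
Translating back via the composition identity gives $\nu(\partial_bQ)>\nu(Q)-b\,\ep(Q)$, contradicting $b\in I(Q)$.

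The main subtlety lies in the combinatorial step: one must confirm uniformly across all compatible pairs $(\chr(K),\chr(k))$ that a nontrivial splitting with $v(\tbinom{b}{a})=0$ exists whenever $b$ is not a $p$-power. Beyond that, the valuation-theoretic core is short and clean: two applications of the defining inequality for $\ep$, plus the strictness furnished by the key polynomial property applied to the lower-degree polynomial $\partial_cQ$.
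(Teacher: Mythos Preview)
The paper does not supply its own proof of this proposition; it simply quotes the result from \cite{NS2018}. Your argument is correct and is essentially the standard one: decompose $b=a+c$ with $\binom{b}{a}$ a $v$-unit (via Kummer's carry-counting when $\chr(k)=p>0$, trivially when $\chr(k)=0$), invoke the Hasse composition $\partial_a\circ\partial_c=\binom{b}{a}\,\partial_b$, and use $\ep(\partial_cQ)<\ep(Q)$ together with two applications of the defining inequality for $\ep$ to obtain $\nu(\partial_bQ)>\nu(Q)-b\,\ep(Q)$, contradicting $b\in I(Q)$.

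One small imprecision: you assert that $\partial_cQ$ has positive degree directly from $0<c<\deg(Q)$. In positive characteristic the leading coefficient $\binom{\deg Q}{c}$ of $\partial_cQ$ may vanish in $K$, so $\deg(\partial_cQ)=\deg(Q)-c$ need not hold. However, the needed conclusion follows by the same device you already use for nonvanishing: if $\partial_cQ$ were constant, then $\partial_a(\partial_cQ)=0$ (since $a\ge1$), whence $\partial_bQ=0$, contradicting $b\in I(Q)$. With this patch the proof is complete.
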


The next basic property of key polynomials is a generalization of \cite[Prop.10]{Dec}.

\begin{lemma}\label{abcd}
Let $Q\in\kx$ be a key polynomial for $\nu$, and let $f\in\kx$ be non-constant polynomial such that $\ep(f)<\ep(Q)$.
Consider the division with remainder in $\kx$:
$$
f=a+qQ, \qquad \deg(a)<\deg(Q).
$$
Then, \ $\nu(f)=\nu(a)<\nu(qQ)$.
\end{lemma}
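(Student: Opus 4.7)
The plan is to argue by contradiction: assume $\nu(a)\geq \nu(qQ)$ and use the derivatives $\partial_b$ to manufacture a non-constant polynomial of degree less than $\deg(Q)$ whose $\ep$-invariant is at least $\ep(Q)$, contradicting the definition of a key polynomial.

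First I would dispose of the degenerate cases. If $\deg(f)<\deg(Q)$ then $q=0$, $a=f$, and $\nu(qQ)=\infty$, so the conclusion is immediate. If $\nu(qQ)=\infty$ but $q\ne0$, the assumption $\nu(a)\geq\nu(qQ)$ forces $a=0$; then $f=qQ$, and Corollary~\ref{maxepfg} gives $\ep(f)=\max\{\ep(q),\ep(Q)\}\geq\ep(Q)$, contradicting the hypothesis. So I may assume $q\ne0$ and $\nu(qQ)<\infty$; together with the standing assumption this yields $\nu(f)\geq\min\{\nu(a),\nu(qQ)\}=\nu(qQ)$ by the ultrametric inequality.

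Next comes the derivative computation, which is the heart of the argument. Corollary~\ref{maxepfg} gives $\ep(qQ)\geq\ep(Q)>\ep(f)$, strictly. Pick any $b\in I(qQ)$ (non-empty because $qQ$ is non-constant with $\nu(qQ)<\infty$). Then
$$
\nu(\pb{qQ})=\nu(qQ)-b\,\ep(qQ),\qquad \nu(\pb{f})\geq \nu(f)-b\,\ep(f)\geq \nu(qQ)-b\,\ep(f)>\nu(\pb{qQ}).
$$
Since $\pb{a}=\pb{f}-\pb{qQ}$, the ultrametric inequality forces $\nu(\pb{a})=\nu(\pb{qQ})=\nu(qQ)-b\,\ep(qQ)$, which is finite. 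In particular $\pb{a}\ne0$, so $a$ is non-constant.

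Finally, combining $\nu(a)\geq\nu(qQ)$ with the value of $\nu(\pb{a})$ just obtained,
$$
\ep(a)\geq \frac{\nu(a)-\nu(\pb{a})}{b}=\frac{\nu(a)-\nu(qQ)}{b}+\ep(qQ)\geq \ep(qQ)\geq \ep(Q),
$$
contradicting the key polynomial property applied to $a$, since $0<\deg(a)<\deg(Q)$. Hence $\nu(a)<\nu(qQ)$, and the ultrametric inequality then yields $\nu(f)=\nu(a)$. The main subtlety is to work with $b\in I(qQ)$ rather than $b\in I(Q)$: this is what supplies the strict inequality $\nu(\pb{f})>\nu(\pb{qQ})$ needed to read off $\nu(\pb{a})$, and simultaneously rules out the degenerate possibility that $a$ is zero or a nonzero constant in the contradiction step.
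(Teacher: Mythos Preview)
Your argument is correct and follows the same overall strategy as the paper: assume $\nu(qQ)\le\nu(a)$, pick $b\in I(qQ)$, and use the basic estimate $\nu(\pb{g})\ge\nu(g)-b\,\ep(g)$ to derive a contradiction. The paper reaches the contradiction by a slightly different route: it bounds \emph{both} $\nu(\pb{f})$ and $\nu(\pb{a})$ strictly above $\nu(qQ)-b\,\ep(Q)$ (using $\ep(f)<\ep(Q)$ and $\ep(a)<\ep(Q)$), and then from $\pb{qQ}=\pb{f}-\pb{a}$ deduces $\ep(qQ)<\ep(Q)$, contradicting Corollary~\ref{maxepfg}. You instead bound only $\nu(\pb{f})$, read off $\nu(\pb{a})=\nu(\pb{qQ})$ exactly, and conclude $\ep(a)\ge\ep(Q)$, contradicting the key-polynomial property directly. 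A small bonus of your variant is that the equality $\nu(\pb{a})=\nu(\pb{qQ})<\infty$ forces $\pb{a}\ne0$ and hence $a\notin K$, so you never need to invoke $\ep(a)<\ep(Q)$ as an input; the paper's write-up tacitly assumes $a$ is non-constant at that step (harmless, since for constant $a$ one has $\nu(\pb{a})=\infty$ and the same inequality holds trivially).
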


\begin{proof}
Suppose that $\nu(qQ)\le \nu(a)$. Then, we have  $\nu(qQ)\le \nu(f)$ as well. Let us show that this leads to a contradiction.

Since $Q$ is a key polynomial, $\ep(a)<\ep(Q)$. For any $b\in I(qQ)$, we have
$$
\nu\left(\pb{f}\right)\ge \nu\left(f\right)-b\,\ep(f)>\nu\left(f\right)-b\,\ep(Q)\ge \nu\left(qQ\right)-b\,\ep(Q),
$$
$$
\nu\left(\pb{a}\right)\ge \nu\left(a\right)-b\,\ep(a)>\nu\left(a\right)-b\,\ep(Q)\ge \nu\left(qQ\right)-b\,\ep(Q).
$$
Since $\pb{qQ}=\pb{f}-\pb{a}$, we deduce 
$$
\nu(qQ)-b\,\ep(qQ)=\nu\left(\pb{qQ}\right)>\nu\left(qQ\right)-b\,\ep(Q).
$$
This implies $\ep(qQ)<\ep(Q)$, contradicting Corollary \ref{maxepfg}.
\end{proof}

\begin{corollary}\label{units}
Let $Q\in\kx$ be a key polynomial for $\nu$, and let $f\in\kx$ be non-constant polynomial such that $\ep(f)<\ep(Q)$.
Then, $\inn f$ is a unit in $\ggn$.
\end{corollary}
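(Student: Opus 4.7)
The plan is to read the conclusion off of Lemma~\ref{abcd} directly. Perform the Euclidean division $f = a + qQ$ in $\kx$ with $\deg(a) < \deg(Q)$. Under the hypothesis $\ep(f) < \ep(Q)$, Lemma~\ref{abcd} yields $\nu(f) = \nu(a) < \nu(qQ)$. Passing to the graded algebra $\ggn$, this says that the class of $qQ$ lives in a strictly higher graded piece than the level $\nu(f)$, so in the component of degree $\nu(f)$ we obtain the identity $\inn f = \inn a$. The problem is thereby reduced to showing that $\inn a$ is a unit in $\ggn$.

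From $\ep(f) < \ep(Q) \le \infty$ one infers $\ep(f) < \infty$, hence $\nu(f) < \infty$; as $\nu(a) = \nu(f)$, this rules out $a = 0$. If $a \in K^*$, then $a^{-1} \in K^* \subset \kx$ has value $-\nu(a)$, and $\inn a \cdot \inn{a^{-1}} = \inn 1$ in degree zero, so $\inn a$ is a unit and we are done. Otherwise $1 \le \deg(a) < \deg(Q)$, and the key polynomial property of $Q$ automatically gives $\ep(a) < \ep(Q)$, so the corollary's hypothesis is inherited by the pair $(a, Q)$, now with $\deg(a) < \deg(Q)$.

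To finish, I would argue by induction on $\deg(Q)$. The base case $\deg(Q) = 1$ is vacuous, since $\deg(a) < 1$ forces $a \in K^*$, already treated. In the inductive step, the aim is to replace $Q$ by a key polynomial $Q'$ of strictly smaller degree that still dominates $a$ in the sense $\ep(a) < \ep(Q')$, and then invoke the inductive hypothesis applied to $(a, Q')$. The main obstacle I anticipate is producing such an intermediate $Q'$: nothing in the hypotheses hands us a smaller key polynomial directly. I expect to resolve this using Proposition~\ref{r=maxI}, which reinterprets $\ep$ as the maximal $\bar\nu$-distance from $x$ to a root; this geometric description should let one carve out a monic polynomial of degree intermediate between $\deg(a)$ and $\deg(Q)$ whose root multiset forces $\ep(Q') > \ep(a)$, and then verify that such a $Q'$ is indeed a key polynomial (its smaller-degree truncations necessarily have strictly smaller $\ep$, since they also satisfy the bound coming from $Q$).
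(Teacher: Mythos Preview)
Your first reduction is fine: Lemma~\ref{abcd} does give $\inn f=\inn a$ with $\deg(a)<\deg(Q)$, and the case $a\in K^*$ is immediate. The gap is in the inductive step. You need a key polynomial $Q'$ with $\deg(Q')<\deg(Q)$ and $\ep(a)<\ep(Q')$, but nothing guarantees such a $Q'$ exists. Your proposed construction via Proposition~\ref{r=maxI}---taking a monic polynomial whose roots are a well-chosen subset of $\op{Z}(Q)$---does not in general produce a polynomial in $K[x]$, only in $\overline{K}[x]$; and even if you could arrange $K$-rationality, checking that the result is a key polynomial for $\nu$ (i.e.\ that \emph{every} polynomial of smaller degree has strictly smaller $\ep$) is not a consequence of the bound coming from $Q$. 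Concretely, if $\deg(Q)=2$ and $\deg(a)=1$, any candidate $Q'$ would have degree~$1$, say $Q'=x-c$; then $\ep(Q')=\nu(x-c)$, and you would need $\nu(x-c)>\ep(a)=\nu(a/\operatorname{lc}(a))$, i.e.\ a linear polynomial of strictly larger $\nu$-value than the monic associate of $a$. There is no reason such a $c\in K$ exists.

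The paper avoids the induction entirely by exploiting the irreducibility of $Q$. Since $\ep(f)<\ep(Q)$, Corollary~\ref{maxepfg} forces $Q\nmid f$ in $\kx$; hence there is a B\'ezout identity $aQ+bf=1$ with $\deg(b)<\deg(Q)$. Now $\ep(bf)=\max\{\ep(b),\ep(f)\}<\ep(Q)$ (using the key-polynomial property for $b$), so Lemma~\ref{abcd} applied to $bf$ with remainder $1$ yields $\nu(bf)=\nu(1)=0<\nu(aQ)$, i.e.\ $\inn b\cdot\inn f=1$. This is a one-line argument once you see the B\'ezout trick; your initial reduction to $\inn a$ is correct but unnecessary.
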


\begin{proof}
Corollary \ref{maxepfg} shows that $f$ is not divisible by $Q$ in $\kx$.
Since $Q$ is irreducible, there is a B\'ezout identity:
$$
aQ+bf=1,\quad \deg(b)<\deg(Q).
$$
By Lemma \ref{abcd}, $bf\sim_\nu 1$, or equivalently, $(\inn b)(\inn f)=\inn 1$ in $\ggn$.
\end{proof}\e

Let $Q\in\kx$ be a  monic polynomial. Consider the function
$$
\nu_Q\colon \kx \lra \gn\infty,\qquad \nu_Q(f)=\min\{\nu(a_sQ^s)\mid 0\le s\},
$$
where $f=\sum_{0\le s}a_sQ^s$ is the canonical $Q$-expansion of $f$ (cf. Lemma \ref{minimal0}).

We denote by $S_{\nu,Q}(f)$ the set of indices $s$ for which $\nu(a_sQ^s)=\nu_Q(f)$.

This function $\nu_Q$ is not necessarily a valuation (see Lemma \ref{zero} below). However, it is a valuation if $Q$ is a key polynomial.

\begin{proposition}\label{vQval}\cite[Prop.12]{Dec}
If $Q$ is a key polynomial for $\nu$, then $\nu_Q$ is a valuation on $\kx$ such that $\nu_Q\le \nu$.  
\end{proposition}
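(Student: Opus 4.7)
The plan is to verify the valuation axioms for $\nu_Q$ directly, using $Q$-expansions and reducing the analysis of products to Lemma \ref{abcd}. The bound $\nu_Q \le \nu$ is immediate from the ultrametric inequality for $\nu$ applied to $f = \sum_s a_s Q^s$, and the non-archimedean inequality $\nu_Q(f+g) \ge \min\{\nu_Q(f), \nu_Q(g)\}$ is routine, since the $Q$-expansion of $f+g$ is obtained coefficient-wise (the constraint $\deg(a_s + b_s) < \deg(Q)$ is preserved under addition).

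For multiplicativity, I write $f = \sum_s a_s Q^s$, $g = \sum_t b_t Q^t$, and for each pair $(s,t)$ I perform a Euclidean division $a_s b_t = c_{s,t} + Q\, q_{s,t}$ with $\deg(c_{s,t}) < \deg(Q)$. Since $\deg(a_s b_t) < 2\deg(Q)$, also $\deg(q_{s,t}) < \deg(Q)$, and regrouping produces the $Q$-expansion
$$
fg = \sum_n d_n Q^n, \qquad d_n = \sum_{s+t=n} c_{s,t} + \sum_{s+t=n-1} q_{s,t}.
$$
Applying Lemma \ref{abcd} to each non-constant $a_s b_t$ (the constant case being trivial, with $q_{s,t}=0$) yields $\nu(a_s b_t) = \nu(c_{s,t}) < \nu(Q\, q_{s,t})$; the required hypothesis $\ep(a_s b_t) < \ep(Q)$ follows from Corollary \ref{maxepfg} together with the key-polynomial bounds $\ep(a_s), \ep(b_t) < \ep(Q)$. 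Combined with the baseline estimates $\nu(a_s Q^s) \ge \nu_Q(f)$ and $\nu(b_t Q^t) \ge \nu_Q(g)$, this gives $\nu(d_n Q^n) \ge \nu_Q(f) + \nu_Q(g)$ for every $n$, hence $\nu_Q(fg) \ge \nu_Q(f) + \nu_Q(g)$.

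To obtain equality, let $s_0 = \max S_{\nu, Q}(f)$ and $t_0 = \max S_{\nu, Q}(g)$. Any pair $(s,t) \ne (s_0, t_0)$ with $s + t = s_0 + t_0$ must satisfy $s > s_0$ or $t > t_0$, hence lies outside $S_{\nu,Q}(f)$ or $S_{\nu,Q}(g)$ by maximality, forcing $\nu(a_s b_t Q^{s+t}) > \nu_Q(f) + \nu_Q(g)$. Together with the strict inequality $\nu(q_{s,t} Q^n) > \nu(a_s b_t Q^{n-1}) \ge \nu_Q(f) + \nu_Q(g)$ coming from Lemma \ref{abcd} for $s+t = n-1$, this singles out $c_{s_0, t_0}$ as the unique minimal contribution to $d_{s_0+t_0}$, giving $\nu(d_{s_0+t_0} Q^{s_0+t_0}) = \nu_Q(f) + \nu_Q(g)$. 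The main obstacle is organizing the product of two $Q$-expansions into a single $Q$-expansion without iterated reductions modulo $Q$; the degree bound $\deg(a_s b_t) < 2\deg(Q)$ ensures that one pass of Euclidean division suffices, after which the value estimates drop straight out of Lemma \ref{abcd}.
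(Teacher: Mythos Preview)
Your proof is correct and follows the standard argument (the paper itself does not supply a proof, deferring instead to \cite[Prop.~12]{Dec}, and your approach is essentially the one found there). The only trivial point to tidy is the appeal to Corollary~\ref{maxepfg} when exactly one of $a_s,b_t$ lies in $K^*$: that corollary is stated for $f,g\in\kx\setminus K$, so in the mixed case you should invoke the identity $\ep(af)=\ep(f)$ for $a\in K^*$ (recorded among the examples) rather than Corollary~\ref{maxepfg}; the conclusion $\ep(a_sb_t)<\ep(Q)$ is of course unchanged.
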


\subsection{Comparison between abstract and MLV key polynomials}

\begin{proposition}\label{A->MLV}
If $Q$ is a key polynomial for $\nu$, then either $\nu(Q)=\infty$, or $Q$ is a MLV key polynomial  of minimal degree for $\nu_Q$.  
\end{proposition}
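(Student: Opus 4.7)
Assume $\nu(Q)<\infty$; I will verify that $Q$ is simultaneously $\nu_Q$-minimal and $\nu_Q$-irreducible, and that no monic polynomial of smaller degree is a MLV key polynomial for $\nu_Q$. The $\nu_Q$-minimality is the easy part, handled via the criterion of Lemma \ref{minimal0}: for any $Q$-expansion $f=\sum a_sQ^s$ of $f\in\kx$, each coefficient satisfies $\deg(a_s)<\deg(Q)$, so its own $Q$-expansion reduces to $a_s$ itself, giving $\nu_Q(a_s)=\nu(a_s)$; the same reasoning yields $\nu_Q(Q)=\nu(Q)$. Since $\nu_Q$ is a valuation by Proposition \ref{vQval}, $\nu_Q(a_sQ^s)=\nu(a_s)+s\nu(Q)=\nu(a_sQ^s)$, and taking the minimum recovers $\nu_Q(f)$.

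\textbf{Main step.} For the $\nu_Q$-irreducibility, I would first establish the characterization
\[
Q\nmid_{\nu_Q}h\iff \nu_Q(h)=\nu(c_0),
\]
where $c_0$ is the constant term of the $Q$-expansion of $h$. If $\nu(c_0)>\nu_Q(h)$, writing $h-c_0=Qg$ with $g=c_1+c_2Q+\cdots$ gives $\nu_Q(h)=\nu_Q(Qg)$; since $\nu_Q$ is a valuation, $\op{in}_{\nu_Q}(h)=\op{in}_{\nu_Q}(Q)\op{in}_{\nu_Q}(g)$, so $Q\mid_{\nu_Q}h$. Conversely, if $\nu(c_0)=\nu_Q(h)$ and $\op{in}_{\nu_Q}(Q)\mid\op{in}_{\nu_Q}(h)$, the expansion $\op{in}_{\nu_Q}(h)=\sum_{s\in S}\op{in}_{\nu_Q}(c_s)\op{in}_{\nu_Q}(Q)^s$ (valid thanks to Step 1) splits off $\op{in}_{\nu_Q}(c_0)$ from a multiple of $\op{in}_{\nu_Q}(Q)$, forcing $\op{in}_{\nu_Q}(Q)\mid\op{in}_{\nu_Q}(c_0)$ and contradicting $\nu_Q$-minimality since $\deg(c_0)<\deg(Q)$. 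Granted this equivalence, suppose $Q\nmid_{\nu_Q}f$ and $Q\nmid_{\nu_Q}g$, with respective constant $Q$-expansion terms $a_0,b_0$, so $\nu_Q(f)=\nu(a_0)$ and $\nu_Q(g)=\nu(b_0)$. Euclidean division $a_0b_0=qQ+c_0$ identifies $c_0$ as the constant term of the $Q$-expansion of $fg$. By Corollary \ref{maxepfg}, $\ep(a_0b_0)=\max\{\ep(a_0),\ep(b_0)\}<\ep(Q)$, so Lemma \ref{abcd} yields $\nu(c_0)=\nu(a_0b_0)=\nu_Q(f)+\nu_Q(g)=\nu_Q(fg)$, whence $Q\nmid_{\nu_Q}fg$.

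\textbf{Minimal degree, and main obstacle.} Finally, let $\phi$ be monic of degree smaller than $\deg(Q)$. Since $Q$ is a key polynomial, $\ep(\phi)<\ep(Q)$; as $Q$ is irreducible in $\kx$ and $Q\nmid\phi$, a B\'ezout identity $aQ+b\phi=1$ exists with $\deg(b)<\deg(Q)$. Reading $b\phi=1+(-a)Q$ as the $Q$-expansion of $b\phi$, Lemma \ref{abcd} gives $\nu(b\phi)=0<\nu(aQ)$, so $\nu_Q(b\phi)=0$ is attained only at $s=0$ and $\op{in}_{\nu_Q}(b\phi)=1$; hence $\op{in}_{\nu_Q}(\phi)$ is a unit in $\gr_{\nu_Q}$, which precludes $\phi$ from being $\nu_Q$-irreducible. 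The main obstacle in the whole proof is the divisibility equivalence in the middle step: although parallel to familiar arguments in the MacLane-Vaqui\'e theory, it must be carried out in $\gr_{\nu_Q}$ rather than $\gr_\nu$, and interlocks the not-yet-verified $\nu_Q$-minimality with Proposition \ref{vQval}.
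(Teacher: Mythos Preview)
Your proof is correct, but it takes a different and more self-contained route than the paper's.

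The paper's argument is very short: after observing $\nu_Q$-minimality of $Q$ via Lemma~\ref{minimal0} (your first paragraph) and noting that every $f$ with $\deg(f)<\deg(Q)$ has $\op{in}_{\nu_Q}f$ a unit (your last paragraph, which parallels Corollary~\ref{units}), it simply invokes \cite[Thm.~3.2 + Prop.~3.5]{KP} to conclude that $Q$ is a MLV key polynomial of minimal degree for $\nu_Q$. In other words, the paper outsources the $\nu_Q$-irreducibility step entirely to the companion paper.

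You instead prove $\nu_Q$-irreducibility by hand. The divisibility criterion $Q\nmid_{\nu_Q}h\iff 0\in S_{\nu_Q,Q}(h)$ that you isolate is exactly the content hidden inside the cited results of \cite{KP}, and your proof of it (splitting $\op{in}_{\nu_Q}h$ along the $Q$-expansion and using the already-established $\nu_Q$-minimality) is the standard one. Your use of Lemma~\ref{abcd} to control the constant term of the $Q$-expansion of $fg$ is neat and makes the primality check transparent. Two small points worth tightening: in the product step, Corollary~\ref{maxepfg} requires both factors to be non-constant, so the case where $a_0$ or $b_0$ lies in $K$ should be mentioned separately (it is trivial since then $a_0b_0$ already has degree $<\deg(Q)$); and you should note explicitly that $\op{in}_{\nu_Q}Q$ is not a unit (equivalently $Q\nmid_{\nu_Q}1$), so that the prime ideal is proper---this falls out of your divisibility criterion applied to $h=1$.

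What your approach buys is independence from \cite{KP}; what the paper's approach buys is brevity.
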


\begin{proof}
Suppose $\nu(Q)<\infty$.

By the very definition of $\nu_Q$, the key polynomial $Q$ satisfies the criterion of $\nu_Q$-minimality of Lemma \ref{minimal0}.
In particular,  $Q\nmid_\nu 1$, so that $\inn Q$ is not a unit in $\ggn$.

On the other hand, for all polynomials $f\in\kx$ of degree less than $\deg(Q)$, the element $\inn f$ is a unit in $\ggn$. In fact, this follows from $\ep(f)<\ep(Q)$, by Corollary \ref{units}.

Hence, $Q$ is a MLV key polynomial of minimal degree for $\nu_Q$ \cite[Thm.3.2+Prop.3.5]{KP}.
\end{proof}\e

The rest of the section is devoted to analyze what MLV polynomials of valuations $\mu\le\nu$ are (abstract) key polynomials for $\nu$. The next two results are crucial for this purpose. 

\begin{proposition}\label{nuQnu}\cite[Lem.14+Prop.15]{Dec}, \cite[Prop.2.7]{NS2018}
Let $Q\in\kx$ be a key po\-ly\-nomial for $\nu$. For all $f\in\kx$, the following hold:

(i) \quad\, For all $b\in\N$ we have
\begin{equation}\label{epdefQ}
\nu_Q\left(\pb{f}\right)\ge \nu_Q(f)-b\,\ep(Q). 
\end{equation}

(ii) \ \; If $S_{\nu,Q}(f)\ne\{0\}$, then equality holds in (\ref{epdefQ}) for some $b\in\N$.\e

(iii) \ If equality holds in (\ref{epdefQ}) for $b\in\N$ and $\nu_Q\left(\pb{f}\right)=\nu\left(\pb{f}\right)$, then $\ep(f)\ge \ep(Q)$.

If in addition, $\nu(f)>\nu_Q(f)$, then $\ep(f)>\ep(Q)$.
\end{proposition}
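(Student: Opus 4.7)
My plan for part~(i) is to expand $f=\sum_s a_s Q^s$ in the $Q$-expansion and iterate the Hasse--Leibniz rule: $\partial_b(a_s Q^s)=\sum_{j+k=b}(\partial_j a_s)(\partial_k Q^s)$ and $\partial_k Q^s=\sum_{b_1+\cdots+b_s=k}\prod_{i=1}^s \partial_{b_i}Q$. Each factor $\partial_{b_i}Q$ is either $Q$ itself (when $b_i=0$) or has degree strictly less than $\deg Q$, and in both cases $\nu_Q(\partial_{b_i}Q)=\nu(\partial_{b_i}Q)\ge \nu(Q)-b_i\ep(Q)$ by the definition of $\ep(Q)$. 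Similarly $\nu_Q(\partial_j a_s)=\nu(\partial_j a_s)\ge \nu(a_s)-j\ep(Q)$, using $\ep(a_s)\le\ep(Q)$ from the key polynomial hypothesis (or trivially when $a_s$ is constant). Multiplying and summing these termwise bounds via the valuation property of $\nu_Q$ (Proposition~\ref{vQval}) yields $\nu_Q(\partial_b(a_sQ^s))\ge\nu_Q(a_sQ^s)-b\ep(Q)\ge\nu_Q(f)-b\ep(Q)$, and taking the minimum over $s$ proves~(i).

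For part~(ii), set $s_0=\max S_{\nu,Q}(f)>0$ and $b^*=\max I(Q)$, and take $b=s_0\,b^*$. My claim is that among all summands in the Leibniz/multinomial expansion of $\partial_b f$, only the term $a_{s_0}(\partial_{b^*}Q)^{s_0}$ --- arising from the constant multinomial tuple $(b^*,\dots,b^*)$ inside $\partial_b Q^{s_0}$, within the $(j,k)=(0,b)$ Leibniz piece of $\partial_b(a_{s_0}Q^{s_0})$ --- attains $\nu_Q(f)-b\ep(Q)$, while all other summands strictly exceed it. The constant tuple is the only $s_0$-tuple with $\sum b_i=b$ and entries in $I(Q)\cup\{0\}$, because the constraint $b_i\le b^*$ forces $\sum b_i\le s_0 b^*=b$ with equality only in that case; this gives the dominant contribution. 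The other Leibniz summands with $j>0$ exceed the target thanks to the strict inequality $\ep(a_{s_0})<\ep(Q)$. For $s\ne s_0$: if $s\notin S_{\nu,Q}(f)$, then $\nu_Q(a_sQ^s)>\nu_Q(f)$ gives strict via~(i); if $s\in S_{\nu,Q}(f)$ with $s<s_0$, the same combinatorial analysis applies but now $sb^*<b$ forbids \emph{any} $s$-tuple from achieving equality, so every Leibniz summand in $\partial_b(a_sQ^s)$ strictly exceeds $\nu_Q(f)-b\ep(Q)$. The strict triangle inequality for $\nu_Q$ then closes the argument, producing $\nu_Q(\partial_b f)=\nu_Q(f)-b\ep(Q)$.

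For part~(iii), under the stated hypotheses one chains
\[
\ep(f)\,\ge\,\frac{\nu(f)-\nu(\partial_b f)}{b}\,=\,\frac{\nu(f)-\nu_Q(\partial_b f)}{b}\,\ge\,\frac{\nu_Q(f)-\nu_Q(\partial_b f)}{b}\,=\,\ep(Q),
\]
where the first inequality is the definition of $\ep(f)$, the middle equality is the assumption $\nu(\partial_b f)=\nu_Q(\partial_b f)$, the second inequality uses $\nu(f)\ge\nu_Q(f)$, and the last equality is the assumed case of equality in~(i). The second inequality is strict precisely when $\nu(f)>\nu_Q(f)$, yielding the final assertion. The main obstacle I foresee is the careful bookkeeping in part~(ii): identifying, among the many Leibniz and multinomial summands, the unique dominant contribution and ruling out competing summands, where the crucial arithmetic ingredient is the constraint $sb^*<s_0 b^*=b$ for $s<s_0$.
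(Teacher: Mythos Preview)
The paper does not supply its own proof of this proposition; it cites \cite[Lem.~14 and Prop.~15]{Dec} and \cite[Prop.~2.7]{NS2018} and moves on. Your argument via the Hasse--Leibniz and multinomial expansion of $\partial_b(a_sQ^s)$ is precisely the technique used in those references (and in \cite[Prop.~6.1]{hmos}, which the present paper invokes later in Lemma~\ref{bt>=ml}), so there is nothing to contrast on the level of strategy.

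Your proof is correct. A few remarks on the bookkeeping in~(ii), since you flagged it as the delicate part. The key observations you use are: (a)~for a tuple $(b_1,\dots,b_{s_0})$ summing to $b=s_0b^*$ with not all $b_i=b^*$, the average forces some $b_i>b^*$, hence $b_i\notin I(Q)$ and the corresponding factor contributes a strict gain; (b)~for $s<s_0$ with $s\in S_{\nu,Q}(f)$ and $j=0$, any tuple summing to $b$ must again have some entry exceeding $b^*$, for the same averaging reason; (c)~for $j>0$ the strict inequality $\ep(a_s)<\ep(Q)$ (or $\partial_ja_s=0$ when $a_s\in K$) handles the remaining Leibniz pieces; and (d)~for $s\notin S_{\nu,Q}(f)$ the bound from~(i) applied to $a_sQ^s$ is already strict. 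These exhaust all summands, and the unique minimum $a_{s_0}(\partial_{b^*}Q)^{s_0}$ then determines $\nu_Q(\partial_b f)$ by the strict ultrametric inequality for the valuation~$\nu_Q$. Part~(iii) is indeed the one-line chain you give.
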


%The next result follows from  \cite[Prop.20+Lem.24]{Dec}. For the sake of completeness we display a proof.

\begin{proposition}\label{eporder} \cite[Prop.20+Lem.24]{Dec}
Let $Q, Q'\in\kx$ be key polynomials for $\nu$. Then, $$\ep(Q)\le\ep(Q')\sii \nu_Q\le\nu_{Q'}.$$
In this case, $\nu_{Q'}(Q)=\nu(Q)$.
Moreover, $\ep(Q)<\ep(Q')$ if and only if $\nu_Q(Q')<\nu(Q')$.
\end{proposition}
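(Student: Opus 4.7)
The plan is to extract everything from Proposition~\ref{nuQnu}, exploiting two standing observations: for any monic key polynomial $Q$, the $Q$-expansion of $Q$ itself is trivial so $\nu_Q(Q)=\nu(Q)$, and for any $g\in\kx$ with $\deg(g)<\deg(Q)$ the $Q$-expansion of $g$ is $g$, whence $\nu_Q(g)=\nu(g)$. The argument splits naturally into the two directions of the main biconditional, the ``moreover'' equality $\nu_{Q'}(Q)=\nu(Q)$, and the strict refinement.

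The direction $\nu_Q\le\nu_{Q'}\Rightarrow\ep(Q)\le\ep(Q')$ is the cleaner one. First I would sandwich $\nu(Q)=\nu_Q(Q)\le\nu_{Q'}(Q)\le\nu(Q)$ to conclude $\nu_{Q'}(Q)=\nu(Q)$, and similarly $\nu_{Q'}\bigl(\pb Q\bigr)=\nu\bigl(\pb Q\bigr)$ for every $b\ge 1$, since $\deg(\pb Q)<\deg(Q)$ makes the same squeeze work. Then for any $b\in I(Q)$, Proposition~\ref{nuQnu}(i) applied with the key polynomial $Q'$ and $f=Q$ gives
\[
\nu(Q)-b\,\ep(Q)=\nu_{Q'}\bigl(\pb Q\bigr)\ge\nu_{Q'}(Q)-b\,\ep(Q')=\nu(Q)-b\,\ep(Q'),
\]
from which $\ep(Q)\le\ep(Q')$ is immediate.

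For the converse $\ep(Q)\le\ep(Q')\Rightarrow\nu_Q\le\nu_{Q'}$, I would first deduce $\deg(Q)\le\deg(Q')$: otherwise $Q'$ would be non-constant of degree less than $\deg(Q)$, and the key-polynomial property of $Q$ would force $\ep(Q')<\ep(Q)$. Next I would establish the moreover clause $\nu_{Q'}(Q)=\nu(Q)$. This is immediate when $\deg(Q)<\deg(Q')$; in the equal-degree case I would argue by contradiction. If $\nu_{Q'}(Q)<\nu(Q)$, inspection of the $Q'$-expansion $Q=1\cdot Q'+(Q-Q')$ shows $S_{\nu,Q'}(Q)=\{0,1\}\ne\{0\}$, so Proposition~\ref{nuQnu}(ii) produces some $b\ge 1$ with equality in~(\ref{epdefQ}); since $\deg(\pb Q)<\deg(Q')$ one also has $\nu_{Q'}(\pb Q)=\nu(\pb Q)$, and the strengthened part of~(iii) (applicable because $\nu(Q)>\nu_{Q'}(Q)$) yields $\ep(Q)>\ep(Q')$, contradicting the hypothesis. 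With $\nu_{Q'}(Q)=\nu(Q)$ secured, the inequality $\nu_Q\le\nu_{Q'}$ is just bookkeeping: in any $Q$-expansion $f=\sum_s a_sQ^s$ every $a_s$ has degree less than $\deg(Q)\le\deg(Q')$, so $\nu_{Q'}(a_sQ^s)=\nu(a_sQ^s)$, and the valuation property of $\nu_{Q'}$ gives $\nu_{Q'}(f)\ge\min_s\nu(a_sQ^s)=\nu_Q(f)$.

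For the strict refinement, the direction $\nu_Q(Q')<\nu(Q')\Rightarrow\ep(Q)<\ep(Q')$ is now free: were $\ep(Q)\ge\ep(Q')$, the already-proved biconditional with $Q,Q'$ swapped would give $\nu_{Q'}\le\nu_Q$, whence $\nu(Q')=\nu_{Q'}(Q')\le\nu_Q(Q')\le\nu(Q')$ forces $\nu_Q(Q')=\nu(Q')$. For the reverse, assume $\ep(Q)<\ep(Q')$ and pick any $b\in I(Q')$. If $\nu_Q(Q')=\nu(Q')$ held, Proposition~\ref{nuQnu}(i) would require $\nu_Q(\pb{Q'})\ge\nu(Q')-b\,\ep(Q)$, while simultaneously $\nu_Q(\pb{Q'})\le\nu(\pb{Q'})=\nu(Q')-b\,\ep(Q')<\nu(Q')-b\,\ep(Q)$, a contradiction. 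The principal obstacle throughout is the equal-degree case of the moreover clause: the expansion $Q=Q'+(Q-Q')$ admits genuine cancellation at the initial-form level, and excluding this under $\ep(Q)\le\ep(Q')$ is precisely where the strengthened part of Proposition~\ref{nuQnu}(iii) becomes indispensable.
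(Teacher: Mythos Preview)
The paper does not supply its own proof of this proposition; it is stated with a citation to \cite[Prop.~20 + Lem.~24]{Dec} and no proof environment follows. So there is nothing in the paper to compare your argument against line by line.

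That said, your proof is correct and entirely self-contained within the paper's framework: every step rests on Proposition~\ref{nuQnu} together with the trivial observations $\nu_Q(Q)=\nu(Q)$ and $\nu_Q(g)=\nu(g)$ for $\deg(g)<\deg(Q)$. The squeeze $\nu_Q(Q)\le\nu_{Q'}(Q)\le\nu(Q)=\nu_Q(Q)$ cleanly handles the ``in this case'' assertion under $\nu_Q\le\nu_{Q'}$, and your treatment of the equal-degree case via $S_{\nu,Q'}(Q)=\{0,1\}$ and the strict clause of Proposition~\ref{nuQnu}(iii) is exactly the right mechanism. The strict refinement is dispatched by a clean symmetry-plus-contradiction, which is tidier than arguing directly. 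In short, your write-up could stand in for the missing proof without any change of substance.
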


%\begin{proof}Suppose $\ep(Q)\le \ep(Q')$. Then $\deg(Q)\le\deg(Q')$. By the definition of $\nu_Q$ and $\nu_{Q'}$,$$a\in\kx,\ \deg(a)<\deg(Q) \imp \nu_Q(a)=\nu(a)=\nu_{Q'}(a).$$

%We claim that $\nu_{Q'}(Q)=\nu(Q)$. This will prove $\nu_Q\le\nu_{Q'}$ because for any $f\in\kx$ with $Q$-expansion $f=\sum_{0\le s} a_sQ^s$, we shall have$$\nu_{Q'}\left(f\right)\ge \min\{\nu_{Q'}(a_sQ^s)\mid 0\le s\}=\min\{\nu(a_sQ^s)\mid 0\le s\}=\nu_Q\left(f\right).$$If $\deg(Q)<\deg(Q')$, the equality $\nu_{Q'}(Q)=\nu(Q)$ follows from the definition of $\nu_{Q'}$.If $\deg(Q)=\deg(Q')$, we may write $Q=Q'+a$ with $\deg(a)<\deg(Q)$. The inequality $\nu_{Q'}(Q)=\min\{\nu(Q'),\,\nu(a)\}<\nu(Q)$  leads to a contradiction. In fact, it implies $$\nu_{Q'}(Q')=\nu(Q')=\nu(a)=\nu_{Q'}(a).$$ Hence,  $S_{\nu,Q'}(Q)\ne\{0\}$ and (iii) of Proposition \ref{nuQnu} shows that $\ep(Q)>\ep(Q')$, against our assumption.\e

%Now, suppose $\ep(Q)<\ep(Q')$. We have just seen that $\nu_Q\le\nu_{Q'}$. We claim that $\nu_Q(Q')<\nu_{Q'}(Q')=\nu(Q')$, so that $\nu_Q<\nu_{Q'}$.In fact, suppose that $\nu_Q(Q')=\nu(Q')$. Then, for all $b\in\N$,$$\nu(Q')-\nu\left(\pb{Q'}\right)\le \nu_Q(Q')-\nu_Q\left(\pb{Q'}\right)\le b\,\ep(Q),  $$the last inequality by (i) of Proposition \ref{nuQnu}. Thus, $\ep(Q')\le\ep(Q)$, against our assumption. \end{proof}

From now on, we fix a valuation $\mu$ on $\kx$ with values in the group $\gn$ and satisfying $$\mu\le\nu.$$ 
Let us first determine for what MLV key polynomials for $\mu$ the truncation $\nu_\phi$ is a valuation.

\begin{lemma}\label{zero}
Suppose $\mu<\nu$, and take $\phi\in\kpm$.
\begin{enumerate}
\item $\phi\in\phmn\ \imp\ \nu_\phi\mbox{ is a valuation and }\ \mu<\nu_\phi\le \nu$. 
\item $\phi\not \in\phmn,\ \deg(\phi)\le \deg\left(\phmn\right)\ \imp\ \nu_\phi=\mu$. 
\item $\deg(\phi)> \deg\left(\phmn\right)\ \imp\ \nu_\phi$ \ is not a valuation. 
\end{enumerate}
\end{lemma}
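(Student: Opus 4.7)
For Case (1), since $\phi\in\phmn$ has $\deg(\phi)=\deg(\phmn)$, the MacLane--Vaqui\'e lemma gives $\mu(a)=\nu(a)$ for every $a\in\kx$ with $\deg(a)<\deg(\phi)$; applied to the coefficients of a $\phi$-expansion, this turns the definition of $\nu_\phi$ into the formula for the ordinary augmentation $\mu_\phi^{\nu(\phi)}$. Since $\phi\in\kpm$ and $\nu(\phi)>\mu(\phi)$, this augmentation is a valuation, and $\mu<\nu_\phi\le\nu$ follows from $\nu_\phi(\phi)=\nu(\phi)>\mu(\phi)$ together with the trivial bound $\nu_\phi\le\nu$.

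For Case (2), I first establish $\mu(\phi)=\nu(\phi)$. This is immediate if $\deg(\phi)<\deg(\phmn)$. Otherwise $\deg(\phi)=\deg(\phmn)$, and picking $\phi_0\in\phmn$ and writing $\phi=\phi_0+(\phi-\phi_0)$ with $\deg(\phi-\phi_0)<\deg(\phmn)$ (so $\mu=\nu$ on $\phi-\phi_0$ while $\mu(\phi_0)<\nu(\phi_0)$), the assumption $\phi\not\sim_\mu\phi_0$ rules out the subcase $\mu(\phi_0)<\mu(\phi-\phi_0)$, and inspection of the remaining subcases forces $\mu(\phi)=\nu(\phi)$. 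Once this is known, $\nu_\phi(f)=\min_s(\mu(a_s)+s\mu(\phi))=\mu(f)$ by Lemma \ref{minimal0} applied to the $\mu$-minimal key polynomial $\phi$.

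For Case (3), suppose for contradiction that $\nu_\phi$ is a valuation. Pick $\phi_0\in\phmn$ and perform Euclidean division $\phi=q\phi_0+r$, so $\deg(q)>0$ and $0\ne r$ has degree less than $\deg(\phi_0)$ (hence $\mu(r)=\nu(r)$; note $r\ne0$ since $\phi$ is irreducible). Comparing $\mu(q\phi_0)$ and $\mu(r)$ yields three subcases. If $\mu(q\phi_0)<\mu(r)$, then $\inm\phi=\inm q\cdot\inm\phi_0$; primality of $(\inm\phi)\subset\ggm$ together with non-invertibility of $\inm\phi_0$ forces $\inm q$ to be a unit, whence $\inm\phi\mid\inm\phi_0$, i.e., $\phi\mid_\mu\phi_0$, contradicting $\mu$-minimality of $\phi$ (as $\deg(\phi_0)<\deg(\phi)$). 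If $\mu(q\phi_0)>\mu(r)$, then $\inm\phi=\inm r$ gives $\phi\mid_\mu r$ with $\deg(r)<\deg(\phi)$, again contradicting $\mu$-minimality. In the remaining subcase $\mu(q\phi_0)=\mu(r)$, the strict inequality $\nu(\phi_0)>\mu(\phi_0)$ upgrades to
$$
\nu(q\phi_0)\ge\mu(q)+\nu(\phi_0)=\mu(q\phi_0)+(\nu(\phi_0)-\mu(\phi_0))>\mu(r)=\nu(r),
$$
and the ultrametric inequality applied to $\phi=q\phi_0+r$ then forces $\nu(\phi)=\nu(r)$. The $\phi$-expansion $q\phi_0=-r+\phi$ yields $\nu_\phi(q\phi_0)=\min(\nu(r),\nu(\phi))=\nu(r)$, which is strictly less than $\nu(q\phi_0)=\nu_\phi(q)+\nu_\phi(\phi_0)$, contradicting the multiplicativity of $\nu_\phi$.

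The main obstacle is Case (3) when $\mu(\phi)=\nu(\phi)$: the natural shortcut via the augmentation $\mu_\phi^{\nu(\phi)}$ (which when $\mu(\phi)<\nu(\phi)$ has degree $\deg(\phi)$, immediately conflicting with $\phi_0\in\phmn$ being a key polynomial for it of strictly smaller degree) collapses to $\mu$ because $\mu_\phi^{\mu(\phi)}=\mu$. The Euclidean-division analysis above handles both situations uniformly, as the crucial contradiction is extracted directly from the shape of the $\phi$-expansion of $q\phi_0$.
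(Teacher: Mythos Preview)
Your proof is correct. Parts (1) and (2) follow the paper almost verbatim; you simply spell out the verification of $\mu(\phi)=\nu(\phi)$ in (2) that the paper leaves implicit.

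For part (3) you take a genuinely different route. The paper argues by the ``slope'' inequality \cite[Thm.~3.9]{KP}: assuming $\nu_\phi$ is a valuation, both $\mu$ and $\nu_\phi$ satisfy $\eta(f)/\deg(f)\le \eta(\phi)/\deg(\phi)$ for monic $f$, with equality exactly for $\eta$-minimal $f$; applying this to $f=Q\in\phmn$ yields the contradictory chain
\[
\frac{\mu(Q)}{\deg(Q)}<\frac{\nu_\phi(Q)}{\deg(Q)}\le\frac{\nu_\phi(\phi)}{\deg(\phi)}=\frac{\mu(\phi)}{\deg(\phi)}=\frac{\mu(Q)}{\deg(Q)}.
\]
This is slick but relies on the external structural result and on the (true, but not entirely obvious) fact that $\mu(\phi)=\nu(\phi)$, which follows from $Q\nmid_\mu\phi$ for distinct key-polynomial classes. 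Your Euclidean-division argument is more hands-on: the first two subcases rule themselves out purely from the $\mu$-minimality and $\mu$-irreducibility of $\phi$, and in the remaining subcase you exhibit an explicit failure of multiplicativity for $\nu_\phi$ on the product $q\cdot\phi_0$ via its $\phi$-expansion $-r+\phi$. Your approach is more elementary and, as you note, insensitive to whether $\mu(\phi)$ equals $\nu(\phi)$; the paper's approach is shorter once one is willing to invoke \cite[Thm.~3.9]{KP}.
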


\begin{proof}
(1)  For any $\phi\in\phmn$, the function $\nu_\phi$ coincides with the augmented valuation $[\mu;\phi,\nu(\phi)]$ \cite[Sec.1]{Vaq}. The inequalities $\mu<\nu_\phi\le \nu$ are obvious.\e

(2) \ Suppose $\phi\not \in\phmn$ and $\deg(\phi)\le \deg\left(\phmn\right)$. Then, 
$$\mu(\phi)=\nu(\phi),\qquad \mu(a)=\nu(a),\ \forall\,a\in\kx \mbox{ with }\deg(a)<\deg(\phi). $$
Since $\phi$ is $\mu$-minimal, Lemma \ref{minimal0} shows that
$$
\mu(f)=\min\{\mu\left(a_s\phi^s\right)\mid 0\le s\}=\min\{\nu\left(a_s\phi^s\right)\mid 0\le s\}=\nu_\phi(f),
$$
for all $f\in\kx$ with $\phi$-expansion $f=\sum_{0\le s} a_s\phi^s$.\e

(3) \ Let $\phmn=[Q]_\mu$, and suppose $\deg(\phi)> \deg(Q)$ and $\nu_\phi$ is a valuation. Let us derive a contradiction.

By the definition of $\nu_\phi$, our polynomial $\phi$ satisfies the criterion of $\nu_\phi$-minimality of Lemma \ref{minimal0}. Hence, $\op{in}_{\nu_\phi} \phi$ is not a unit in $\gg_{\nu_\phi}$. By Theorem \ref{empty}, the set $\op{KP}(\nu_\phi)$ is not empty.

Therefore, we may apply \cite[Thm.3.9]{KP} to both valuations $\mu$ and $\nu_\phi$. For all monic polynomials $f\in\kx$ we have
$$
\dfrac{\mu(f)}{\deg(f)}\le\dfrac{\mu(\phi)}{\deg(\phi)},\qquad \dfrac{\nu_\phi(f)}{\deg(f)}\le\dfrac{\nu_\phi(\phi)}{\deg(\phi)},
$$
and equality holds if and only if $f$ is $\mu$-minimal, or $\nu_\phi$-minimal, respectively. 

If we apply these inequalities to $f=Q$ we get a contradiction: 
$$
\dfrac{\mu(Q)}{\deg(Q)}<\dfrac{\nu(Q)}{\deg(Q)}=\dfrac{\nu_\phi(Q)}{\deg(Q)}\le \dfrac{\nu_\phi(\phi)}{\deg(\phi)}=\dfrac{\nu(\phi)}{\deg(\phi)}=\dfrac{\mu(\phi)}{\deg(\phi)}=\dfrac{\mu(Q)}{\deg(Q)},
$$
where the last equality holds because $Q$ is $\mu$-minimal.
\end{proof}\e

\begin{lemma}\label{u}\cite[Lem.2.11]{NS2018}
Let $Q$ be a key polynomial for  $\nu$ such that $\nu_Q<\nu$. Then, all polynomials in $\Phi_{\nu_Q,\nu}$ are key polynomials for $\nu$.
\end{lemma}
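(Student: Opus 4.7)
The plan is to verify the defining property of key polynomials for $\nu$ directly: given $\phi\in\Phi_{\nu_Q,\nu}$, we must show that for every non-constant $f\in\kx$ with $\deg(f)<\deg(\phi)$ we have $\ep(f)<\ep(\phi)$. The whole argument hinges on the two consequences of minimality of $\deg(\phi)$ in $\Phi_{\nu_Q,\nu}$: first, that $\nu_Q(g)=\nu(g)$ for every $g\in\kx$ with $\deg(g)<\deg(\phi)$, and second, that $\phi$ itself is a MLV key polynomial for $\nu_Q$ by \cite[Lem.1.15]{Vaq}. Combined with Proposition \ref{A->MLV} applied to $Q$, this gives $\deg(Q)\le\deg(\phi)$. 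Note also that we may assume $\nu(Q)<\infty$, since $\nu(Q)=\infty$ would force $\nu_Q=\nu$ on $\kx$.

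The first key step is to prove $\ep(Q)<\ep(\phi)$. Consider the $Q$-expansion $\phi=\sum_{s\ge0}a_s\,Q^s$; since $\deg(\phi)\ge\deg(Q)$, at least one coefficient with $s\ge 1$ is non-zero. If the minimum $\nu_Q(\phi)$ were achieved only at $s=0$, the strict triangle inequality would give $\nu(\phi)=\nu(a_0)=\nu_Q(\phi)$, contradicting $\nu_Q(\phi)<\nu(\phi)$. So $S_{\nu,Q}(\phi)\ne\{0\}$, and Proposition \ref{nuQnu}(ii) produces some $b\in\N$ for which
\[
\nu_Q\!\left(\pb{\phi}\right)=\nu_Q(\phi)-b\,\ep(Q).
\]
For $b\ge1$ we have $\deg(\pb{\phi})<\deg(\phi)$, so the minimality of $\deg(\phi)$ yields $\nu(\pb{\phi})=\nu_Q(\pb{\phi})$. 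Hence
\[
\frac{\nu(\phi)-\nu(\pb{\phi})}{b}=\ep(Q)+\frac{\nu(\phi)-\nu_Q(\phi)}{b}>\ep(Q),
\]
and taking the maximum on the left gives $\ep(\phi)>\ep(Q)$.

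The second step is the key-polynomial inequality for arbitrary $f$ with $0<\deg(f)<\deg(\phi)$, split by whether $\deg(f)$ is below or above $\deg(Q)$. If $\deg(f)<\deg(Q)$, then $\ep(f)<\ep(Q)<\ep(\phi)$ because $Q$ is itself a key polynomial for $\nu$. If $\deg(Q)\le\deg(f)<\deg(\phi)$, then by minimality of $\deg(\phi)$ we have $\nu(f)=\nu_Q(f)$ and $\nu(\pb{f})=\nu_Q(\pb{f})$ for all $b\ge1$; Proposition \ref{nuQnu}(i) applied to $f$ gives
\[
\nu(\pb{f})=\nu_Q(\pb{f})\ge\nu_Q(f)-b\,\ep(Q)=\nu(f)-b\,\ep(Q),
\]
so every term $(\nu(f)-\nu(\pb{f}))/b$ is bounded by $\ep(Q)$, whence $\ep(f)\le\ep(Q)<\ep(\phi)$.

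The only technical wrinkle is the behaviour at $\nu$-support. Since $Q$ is a MLV key polynomial for $\nu_Q$, the valuation $\nu_Q$ has trivial support, so no polynomial of degree less than $\deg(\phi)$ can have infinite $\nu$-value (otherwise it would lie in $\supp(\nu_Q)$ by minimality), which keeps $\ep(f)$ finite; and in the corner case $\nu(\phi)=\infty$ the inequality $\ep(f)<\ep(\phi)=\infty$ is automatic. I do not expect a real obstacle in the argument: the main delicate point is the proof that $\ep(\phi)>\ep(Q)$, where one must correctly invoke Proposition \ref{nuQnu}(ii) and verify the hypothesis $S_{\nu,Q}(\phi)\ne\{0\}$ from the strict inequality $\nu_Q(\phi)<\nu(\phi)$.
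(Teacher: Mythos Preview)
The paper does not give its own proof of this lemma; it is quoted directly from \cite[Lem.~2.11]{NS2018}. Your argument is correct and is essentially the natural proof: use the minimality of $\deg(\phi)$ in $\Phi_{\nu_Q,\nu}$ to transfer all $\nu$-values on polynomials of smaller degree to $\nu_Q$-values, then invoke Proposition~\ref{nuQnu}. Two small remarks. First, your Step~1 is precisely Proposition~\ref{nuQnu}(iii) applied to $f=\phi$: once you have checked $S_{\nu,Q}(\phi)\ne\{0\}$ (via (ii)) and $\nu_Q(\pb{\phi})=\nu(\pb{\phi})$ (via minimality), the conclusion $\ep(\phi)>\ep(Q)$ is exactly the ``in addition'' clause of (iii), so you may simply cite it rather than rederive it. Second, the case split in Step~2 is unnecessary: Proposition~\ref{nuQnu}(i) applies to all $f$, and for any $f$ with $0<\deg(f)<\deg(\phi)$ both $\nu_Q(f)=\nu(f)$ and $\nu_Q(\pb f)=\nu(\pb f)$ hold by minimality, giving $\ep(f)\le\ep(Q)<\ep(\phi)$ uniformly. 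Neither remark affects correctness.
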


\begin{proposition}\label{dos}
Let $\mu$ be a valuation on $\kx$ such that $\mu\le \nu$. Then all MLV key polynomials for $\mu$ of minimal degree are key polynomials for $\nu$. 
\end{proposition}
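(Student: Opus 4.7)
The plan is a two-stage argument: first reduce the question from $\nu$ down to $\mu$, and then address the resulting claim that $\phi$ is an abstract key polynomial for $\mu$ itself. Write $m=\deg(\mu)=\deg(\phi)$, and fix $f\in\kx$ with $0<\deg(f)<m$; the goal is $\ep_\nu(f)<\ep_\nu(\phi)$.

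\emph{Reduction from $\nu$ to $\mu$.} I would first show that $\mu(g)=\nu(g)$ for every $g\in\kx$ with $\deg(g)<m$. This is immediate when $\mu=\nu$; when $\mu<\nu$, the MacLane--Vaqui\'e description recalled in Section \ref{subsecChains} gives that $\mu(g)<\nu(g)$ would force $\phmn\mid_\mu g$, which is impossible since $\phmn$ is $\mu$-minimal with $\deg(\phmn)\ge\deg(\mu)=m>\deg(g)$. Every nonzero derivative $\pb{h}$ with $b\ge1$ has degree strictly smaller than $h$, so applying this identity to $f$, $\phi$, and all their nonzero derivatives yields
$$
\ep_\nu(f)=\ep_\mu(f), \qquad \ep_\nu(\phi)\ge\ep_\mu(\phi),
$$
the second being only an inequality because only $\mu(\phi)\le\nu(\phi)$ is guaranteed. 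Hence it suffices to prove $\ep_\mu(f)<\ep_\mu(\phi)$, reducing the problem to the case $\nu=\mu$.

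\emph{Showing that $\phi$ is an abstract key polynomial for $\mu$.} By Corollary \ref{maxepfg} we may replace $f$ by one of its monic irreducible factors, so assume $f$ is monic and irreducible. Fix an extension $\bar\mu$ of $\mu$ to $\overline{K}[x]$; by Proposition \ref{r=maxI},
$$
\ep_\mu(\phi)=\max_i\bar\mu(x-\t_i), \qquad \ep_\mu(f)=\max_j\bar\mu(x-\beta_j),
$$
where the $\t_i$ are the roots of $\phi$ and the $\beta_j$ those of $f$. Set $\delta=\ep_\mu(\phi)$ and suppose, for contradiction, that some root $\beta$ of $f$ satisfies $\bar\mu(x-\beta)\ge\delta$. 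Since $[K(\beta):K]\le\deg(f)<m$, I would pass to $L=K(\beta)$ with the restricted valuation $\mu_L:=\bar\mu|_{L[x]}$: the polynomial $x-\beta$ is a MLV key polynomial of minimal degree for $\mu_L$, so \cite[Thm.~3.9]{KP} applied over $L$ gives
$$
\mu(\phi)/m=\mu_L(\phi)/\deg_L(\phi)\le\mu_L(x-\beta)=\bar\mu(x-\beta),
$$
with equality if and only if $\phi$ is $\mu_L$-minimal.

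\emph{The main obstacle} is to convert this into a contradiction with the $\mu$-minimality of $\phi$ in $\kx$. I would use the Taylor expansion $\phi(x)=\sum_{b\ge0}\pb{\phi}(\beta)(x-\beta)^b$, the inequality $\mu(\pb{\phi})\ge\mu(\phi)-b\,\delta$ built into the definition of $\delta$, and ultrametric estimates on $\bar\mu(\pb{\phi}(\beta))$ obtained from Proposition \ref{r=maxI} applied to $\pb{\phi}$. In the strict case $\bar\mu(x-\beta)>\delta$, a term-by-term comparison forces $\bar\mu(\phi)>\mu(\phi)$, which is absurd. In the boundary case $\bar\mu(x-\beta)=\delta=\mu(\phi)/m$, one finds that all $\bar\mu(x-\t_i)$ equal $\delta$, that $\phi$ is $\mu_L$-minimal, and that $\phi\equiv(x-\beta)^m$ in $L[x]$ modulo terms of strictly larger $\mu_L$-value; pulling this congruence back to $\kx$ exhibits a polynomial of degree $<m$ whose initial form in $\ggm$ divides $\inm\phi$, contradicting the $\mu$-minimality of $\phi$. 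Extracting such a witness over $K$ from an $L$-theoretic congruence is the core difficulty; everything before—the reduction from $\nu$ to $\mu$, the passage to roots, and the appeal to \cite[Thm.~3.9]{KP}—is routine.
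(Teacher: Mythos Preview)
Your reduction from $\nu$ to $\mu$ is correct and clean: since every $g$ with $\deg(g)<m$ satisfies $\mu(g)=\nu(g)$, you get $\ep_\nu(f)=\ep_\mu(f)$ and $\ep_\nu(\phi)\ge\ep_\mu(\phi)$, so it suffices to treat $\mu=\nu$. The paper does not isolate this reduction; it is a genuine simplification.

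However, your Step~2 is not a proof but a plan with an acknowledged hole, and the scaffolding around that hole is shakier than you suggest. Two concrete issues. First, your appeal to \cite[Thm.~3.9]{KP} over $L=K(\beta)$ presumes that $x-\beta$ is a MLV key polynomial \emph{of minimal degree} for $\mu_L$; you have not checked that $\op{in}_{\mu_L}(x-\beta)$ is a non-unit, nor that $\deg(\mu_L)=1$, and in general neither is automatic. Second, in the ``strict case'' your term-by-term comparison needs lower bounds on the constants $\bar\mu(\pb{\phi}(\beta))\in\overline{K}$, but the inequality $\mu(\pb{\phi})\ge\mu(\phi)-b\delta$ controls the valuation of the \emph{polynomial} $\pb{\phi}$ in $\kx$, not the valuation of its evaluation at $\beta$; passing from one to the other is exactly the kind of $\mu_L$-minimality statement you have not established. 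The boundary case you yourself flag as open, and I do not see how to ``pull back'' the congruence $\phi\sim_{\mu_L}(x-\beta)^m$ to produce a $K[x]$-witness against $\mu$-minimality when $\beta\notin K$.

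The paper avoids all of this by a structural induction on $\deg(\mu)$, using the MacLane--Vaqui\'e theorem to realize $\mu$ as an augmentation of some $\rho$ with $\deg(\rho)<\deg(\mu)$. In the ordinary case one shows $\phi\in\Phi_{\rho,\nu}$ and invokes Lemma~\ref{u} (after the induction hypothesis gives a key polynomial $Q$ for $\nu$ with $\nu_Q=\rho$); in the limit case one uses Proposition~\ref{nuQnu} along the continuous chain $(\rho_i)$ to squeeze $\ep(f)\le\ep(\chi_i)<\ep(\phi)$. This machinery replaces your root-theoretic analysis entirely and never leaves $\kx$. If you want to salvage your approach, the honest target is a direct proof that a minimal-degree MLV key polynomial for $\mu$ is an abstract key polynomial for $\mu$; but that statement is essentially the $\mu=\nu$ case of the proposition itself, and the paper's experience suggests it is most cleanly obtained through the augmentation structure rather than through roots.
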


\begin{proof}
If $\mu=\nu$ and $\kpn=\emptyset$, the statement of the proposition is empty. Therefore, in the case $\mu=\nu$ we may assume that $\kpn\ne\emptyset$, 

We proceed by induction on $\deg(\mu)$. If $\deg(\mu)=1$, the statement is obvious because all monic polynomials of degree one are key polynomials.

Suppose $\deg(\mu)\ge2$ and the statement holds for all valuations $\rho<\nu$ of degree less than $\deg(\mu)$. Let $\phi\in\kpm$ be a MLV key polynomial for $\mu$ of minimal degree $\deg(\phi)=\deg(\mu)$. 

Since $\kpm\ne\emptyset$, the theorem of MacLane-Vaqui\'e 
shows that $\mu$ is the augmentation of a valuation $\rho$ of smaller degree. Let us discuss in an independent way the cases in which $\mu$ is an ordinary or a limit augmentation of $\rho$.\e

\noindent{\bf Ordinary augmentation. }We have $\mu=[\rho; \chi,\mu(\chi)]=\nu_\chi$, for a certain MLV key polynomial $\chi\in\kpr$ satisfying $\mu(\chi)>\rho(\chi)$, which becomes a MLV key polynomial of minimal degree for $\mu$ \cite[Cor.7.3]{KP}. 

In particular, $\deg(\phi)=\dg(\chi)$ and $\mu(\phi)=\mu(\chi)$ \cite[Thm.3.9]{KP}. Let us write $\phi=\chi+a$, with $a\in\kx$ of degree less than $\deg(\chi)$.
Since $\Phi_{\rho,\mu}=[\chi]_\rho$, we have  
$$
\rho(a)=\mu(a)\ge\mu(\chi)>\rho(\chi).
$$
Hence, $\phi\srh\chi$, so that $\phi\in\Phi_{\rho,\mu}$. 

Now, let $Q\in\kpr$ be a MLV key polynomial for $\rho$ of minimal degree; that is, $\deg(Q)=\deg(\rho)<\deg\mu)=\deg(\phi)$.
By the induction hypothesis, $Q$ is a key polynomial for $\nu$. Since $Q\not\in \Phi_{\rho,\mu}$, Lemma \ref{zero} shows that $\nu_Q=\rho$. Thus, Lemma \ref{u} shows that $\phi\in\Phi_{\rho,\mu}$ is a key polynomial for $\nu$.\e

\noindent{\bf Limit augmentation. }The valuation $\rho$ is the initial valuation of a continuous MacLane chain of valuations of constant degree $m$:
$$
\rho=\rho_0\ \stackrel{\chi_1,\be_1}\lra\ \rho_1\ \lra\ \cdots\ \stackrel{\chi_i,\be_i}\lra\ \rho_i\ \lra\ \cdots
$$
Each $\rho_i=[\rho_{i-1};\chi_i,\be_i]=\nu_{\chi_i}$ is an ordinary augmentation. All polynomials $\chi_i$ have degree $m$ and belong to $\kp(\rho_{i-1})\cap\kp(\rho_i)$.  

All polynomials $f\in\kx$ of degree less than or equal to $m$ are stable; that is, $\rho_{i}(f)=\mu(f)=\nu(f)$ for all $i$ sufficiently large.

%for some index $i_0$ we have$$\rho_{i}(f)=\rho_{i_0}(f), \quad\ \forall\,i\ge i_0.$$This implies  $\rho_{i}(f)=\mu(f)=\nu(f)$ for all $i\ge i_0$.

Also, the chain admits polynomials which are not stable, and  we have $\mu=\nu_\varphi$ for some monic non-stable $\varphi\in\kx$ of minimal degree, which becomes a MLV key polynomial of minimal degree for $\mu$ \cite[Cor.7.13]{KP}. See section \ref{subsecLMLV} for a more precise definition of a limit augmentation.%\cite[Sec.2]{Vaq} or \cite[Sec.3]{MLV}

In particular, $\deg(\varphi)=\deg(\phi)$ and $\mu(\varphi)=\mu(\phi)$ \cite[Thm.3.9]{KP}. Let us write $\phi=\varphi+a$, with $a\in\kx$ of degree less than $\deg(\varphi)$. By the minimality of $\deg(\varphi)$, the polynomial $a$ is stable; that is, for some index $i_0$ we have
$$
\rho_i(a)=\mu(a)\ge \mu(\varphi)>\rho_i(\varphi),\qquad \forall\,i\ge i_0.
$$
Hence, $\phi\sim_{\rho_i}\varphi$ for all $i\ge i_0$. This implies that $\phi$ is non-stable too:
$$
\rho_i(\phi)=\rho_i(\varphi)<\mu(\varphi)=\mu(\phi),\qquad \forall\,i\ge i_0.
$$
%By \cite[Lem.5.5]{MLV}, we have $\mu=[(\rho_i)_{i\ge0};\phi,\mu(\phi)]$.

By the induction hypothesis, all $\chi_i$ are key polynomials for $\nu$. %By (1) of Lemma \ref{zero}, $$\nu_{\chi_i}=\rho_i \quad\mbox{ for all $i$}.$$ 
\ Take any $b\in[1,\deg(\phi)]\cap\N$. Since $\deg\left(\pb{\phi}\right)<\deg(\phi)$, the polynomial $\pb{\phi}$ is stable. Take $i$ sufficiently large so that 
$$
\rho_i\left(\pb{\phi}\right)=\mu\left(\pb{\phi}\right)=\nu\left(\pb{\phi}\right),\qquad \forall\, b\in\N.
$$

By \cite[Sec.4]{hmos}, or \cite[Sec.3]{Vaq2004}, the integers $\max(S_{\nu,\chi_i}(\phi))$ are all positive, and stabilize for a sufficiently large index $i$. In particular, $S_{\nu,\chi_i}(\phi)\ne\{0\}$ for all $i$. 
By (iii) of Proposition \ref{nuQnu}, $\ep(\phi)>\ep(\chi_i)$ for all $i$ sufficiently large.

Now, take any $f\in\kx$ with $\deg(f)<\deg(\phi)$. Since $f$ and $\pb{f}$ are stable, we may take $i$ sufficiently large so that
$$
\rho_i(f)=\nu(f),\qquad \rho_i\left(\pb{f}\right)=\nu\left(\pb{f}\right),\qquad \forall\,b\in\N.
$$
By (i) of Proposition \ref{nuQnu}, for all $b\in\N$ we have
$$
\dfrac{\nu(f)-\nu\left(\pb{f}\right)}b=
\dfrac{\rho_i(f)-\rho_i\left(\pb{f}\right)}b\le \ep(\chi_i)<\ep(\phi).
$$
Thus, $\phi$ is a key polynomial for $\nu$.
\end{proof}\e

Lemma \ref{zero} exhibited the first examples of MLV key polynomials for $\mu$ that are not (abstract) key polynomials for $\nu$. The next lemma offers some more examples.

\begin{lemma}\label{tres}
Suppose $\mu<\nu$, and take $\phi\in\kpm$.
If $\phi\not \in\phmn$ and $\deg(\phi)>\deg(\mu)$, then $\phi$ is not a key polynomial for $\nu$. 
\end{lemma}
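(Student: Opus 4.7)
The plan is to proceed by contradiction and let $\nu_\phi$ mediate the argument. Assume $\phi$ is a key polynomial for $\nu$; by Proposition \ref{vQval} this forces $\nu_\phi$ to be a valuation with $\nu_\phi\le\nu$, and then Proposition \ref{A->MLV} (when $\nu(\phi)<\infty$) forces $\phi$ to be a MLV key polynomial of minimal degree for $\nu_\phi$, that is, $\deg(\phi)=\deg(\nu_\phi)$. The strategy is to combine these two rigidities with Lemma \ref{zero} to produce a contradiction in each possible range of $\deg(\phi)$.

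First I would dispose of the case $\deg(\phi)>\deg(\phmn)$. Lemma \ref{zero}(3) says $\nu_\phi$ is \emph{not} a valuation in this range, which directly contradicts Proposition \ref{vQval}. No finer analysis is needed here.

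The remaining case is $\deg(\phi)\le\deg(\phmn)$ combined with the hypothesis $\phi\notin\phmn$. By Lemma \ref{zero}(2), $\nu_\phi=\mu$. Since $\nu_\phi(\phi)=\nu(\phi)$ by the definition of truncation, this already gives $\nu(\phi)=\mu(\phi)<\infty$, so Proposition \ref{A->MLV} applies and tells us $\phi$ is a MLV key polynomial of minimal degree for $\nu_\phi=\mu$. Therefore $\deg(\phi)=\deg(\mu)$, contradicting the standing hypothesis $\deg(\phi)>\deg(\mu)$.

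The only delicate point will be the bookkeeping around $\nu(\phi)=\infty$: one must observe that if $\nu(\phi)=\infty$ and $\phi\notin\phmn$, then $\mu(\phi)<\nu(\phi)$ forces $\deg(\phi)>\deg(\phmn)$ (by minimality of the degree of $\phmn$ together with $\phmn=[\varphi]_\mu$), and hence one lands in the first case, which requires no finiteness of $\nu(\phi)$. Beyond this observation the proof is essentially a one-line invocation of Lemma \ref{zero} together with Propositions \ref{vQval} and \ref{A->MLV}; I expect no further obstacle.
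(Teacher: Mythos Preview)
Your proof is correct, and for the first case ($\deg(\phi)>\deg(\phmn)$) it coincides with the paper's argument. For the second case ($\deg(\mu)<\deg(\phi)\le\deg(\phmn)$ with $\phi\notin\phmn$) you take a genuinely different and slightly more direct route.

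The paper argues as follows in the second case: pick $\phi_0\in\kpm$ of minimal degree $\deg(\mu)$; by Proposition~\ref{dos}, $\phi_0$ is an abstract key polynomial for $\nu$, and by Lemma~\ref{zero}(2) one has $\nu_{\phi_0}=\mu=\nu_\phi$. If $\phi$ were also a key polynomial for $\nu$, the definition of key polynomial would force $\ep(\phi_0)<\ep(\phi)$, while Proposition~\ref{eporder} (applied to the equality $\nu_{\phi_0}=\nu_\phi$) forces $\ep(\phi_0)=\ep(\phi)$, a contradiction. You bypass this $\ep$-comparison entirely: from $\nu_\phi=\mu$ and $\nu(\phi)=\nu_\phi(\phi)=\mu(\phi)<\infty$ you invoke Proposition~\ref{A->MLV} to conclude $\deg(\phi)=\deg(\nu_\phi)=\deg(\mu)$, contradicting the hypothesis directly. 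Your argument is shorter and avoids both Proposition~\ref{dos} and Proposition~\ref{eporder}; the paper's approach, on the other hand, illustrates how the invariant $\ep$ governs the ordering of truncations, which is thematically closer to the surrounding material. Your bookkeeping remark about $\nu(\phi)=\infty$ is accurate: in that situation $\mu(\phi)<\nu(\phi)$ and $\phi\notin\phmn$ force $\deg(\phi)>\deg(\phmn)$, so one is back in the first case, which is exactly how the paper (implicitly) handles it as well.
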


\begin{proof}
If $\deg(\phi)>\deg(\phmn)$, the lemma follows from Lemma \ref{zero} and Proposition \ref{vQval}. Suppose $\deg(\mu)<\deg(\phi)\le\deg(\phmn)$.

Let $\phi_0$ be a MLV key polynomial of minimal degree $\deg(\phi_0)=\deg(\mu)$. 
By Lemma \ref{zero}, $\nu_\phi=\mu=\nu_{\phi_0}$.

By Proposition \ref{dos}, $\phi_0$ is a key polynomial for $\mu$. Hence, $\phi$ cannot be a key polynomial because it would have $\ep(\phi)>\ep(\phi_0)$, contradicting Proposition \ref{eporder}. 
\end{proof}\e

We may summarize the results obtained so far in the next two theorems.

\begin{theorem}\label{main}
Suppose that $\mu<\nu$ and $\phi\in\kpm$. Then, $\phi$ is a key polynomial for $\nu$ if and only if it satisfies one of the following two conditions.
\begin{enumerate}
\item $\phi\in\phmn$,
\item $\phi\not \in\phmn$ \ and \ $\deg(\phi)=\deg(\mu)$.
\end{enumerate}

In the first case, $\nu_\phi=[\mu;\phi,\nu(\phi)]$. In the second case, $\nu_\phi=\mu$.
\end{theorem}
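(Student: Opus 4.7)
The plan is to split the proof into the two implications and invoke the preceding lemmas essentially as pre-packaged tools. For the backward direction, I would start with Case (2), which is immediate: if $\phi\in\kpm$ is a MLV key polynomial for $\mu$ with $\deg(\phi)=\deg(\mu)$, then $\phi$ is of minimal degree in $\kpm$, so Proposition \ref{dos} gives that $\phi$ is a key polynomial for $\nu$. Case (1) with $\deg(\phi)=\deg(\mu)$ reduces to the same argument. The substantive subcase is $\phi\in\phmn$ with $\deg(\phi)>\deg(\mu)$; here I would introduce an auxiliary pivot: pick any $\phi_0\in\kpm$ of minimal degree $\deg(\mu)$, observe that $\phi_0\notin\phmn$ because $\deg(\phi_0)<\deg(\phmn)=\deg(\phi)$, and apply Lemma \ref{zero}(2) to conclude $\nu_{\phi_0}=\mu<\nu$. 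Proposition \ref{dos} makes $\phi_0$ a key polynomial for $\nu$, and then Lemma \ref{u} applied to $Q=\phi_0$ shows that every element of $\Phi_{\nu_{\phi_0},\nu}=\phmn$ is a key polynomial for $\nu$; in particular so is $\phi$.

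For the forward direction, I would argue by contrapositive. If $\phi\in\kpm$ satisfies neither (1) nor (2), then $\phi\notin\phmn$ and $\deg(\phi)\ne\deg(\mu)$; since $\phi$ is a MLV key polynomial for $\mu$ its degree is at least $\deg(\mu)$, so in fact $\deg(\phi)>\deg(\mu)$. This is precisely the hypothesis of Lemma \ref{tres}, which asserts that $\phi$ is not a key polynomial for $\nu$, giving the desired contradiction.

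The two final identifications of $\nu_\phi$ are already embedded in Lemma \ref{zero}: Case (1) corresponds to part (1) of that lemma, whose proof records $\nu_\phi=[\mu;\phi,\nu(\phi)]$, and Case (2) fits part (2) since $\deg(\phi)=\deg(\mu)\le\deg(\phmn)$ and $\phi\notin\phmn$, yielding $\nu_\phi=\mu$. The only mildly delicate step in the whole argument is the pivot construction in Case (1) with $\deg(\phi)>\deg(\mu)$: here $\phi$ itself is not of minimal degree in $\kpm$, so Proposition \ref{dos} does not apply to $\phi$ directly, and rerouting through $\phi_0$ to invoke Lemma \ref{u} is the hinge of the argument.
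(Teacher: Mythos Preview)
Your proposal is correct and is precisely how the paper intends the theorem to be read: the paper offers no separate proof, merely introducing Theorems \ref{main} and \ref{mu=nu} with ``We may summarize the results obtained so far in the next two theorems.'' Your assembly of Proposition \ref{dos}, Lemma \ref{u}, Lemma \ref{tres}, and Lemma \ref{zero} is exactly the intended synthesis, including the pivot through a minimal-degree $\phi_0$ to handle $\phi\in\phmn$ with $\deg(\phi)>\deg(\mu)$.
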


\begin{theorem}\label{mu=nu}
Let $\phi\in\kpn$. Then, $\phi$ is a key polynomial for $\nu$ if and only if $\deg(\phi)=\deg(\nu)$. In this case, $\nu_\phi=\nu$.
\end{theorem}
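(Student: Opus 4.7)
The statement divides naturally into two implications, and the key observation is that any $\phi\in\kpn$ must satisfy $\deg(\phi)\ge\deg(\nu)$ by the very definition of $\deg(\nu)$ as the minimal degree of a MLV key polynomial for $\nu$. So the proof splits into two cases, depending on whether this inequality is an equality or strict. Also, since $\kpn\ne\emptyset$ forces $\op{supp}(\nu)=0$ via the isomorphism (\ref{isomL}), we have $\nu(\phi)<\infty$ throughout.

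For the case $\deg(\phi)=\deg(\nu)$, the plan is to invoke Proposition \ref{dos} directly with the valuation $\mu$ taken equal to $\nu$: since $\phi$ is a MLV key polynomial for $\mu=\nu$ of minimal degree $\deg(\mu)=\deg(\nu)$, the proposition delivers immediately that $\phi$ is an abstract key polynomial for $\nu$. For the accompanying identity $\nu_\phi=\nu$, I would note that $\phi$ is $\nu$-minimal, so Lemma \ref{minimal0} gives $\nu(f)=\min\{\nu(a_s\phi^s)\mid 0\le s\}=\nu_\phi(f)$ for every $f\in\kx$.

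For the remaining case $\deg(\phi)>\deg(\nu)$, I plan to derive a contradiction from the assumption that $\phi$ is a key polynomial for $\nu$. Choose a MLV key polynomial $\phi_0\in\kpn$ of minimal degree, so that $\deg(\phi_0)=\deg(\nu)<\deg(\phi)$. By Proposition \ref{dos} (again with $\mu=\nu$), this $\phi_0$ is also a key polynomial for $\nu$, and by Lemma \ref{minimal0} applied to $\phi_0$ we have $\nu_{\phi_0}=\nu$. On the other hand, since $\phi$ is assumed to be a key polynomial and $0<\deg(\phi_0)<\deg(\phi)$, the definition of key polynomial forces $\ep(\phi_0)<\ep(\phi)$. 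Applying Proposition \ref{eporder} to the two key polynomials $\phi_0,\phi$ translates this strict inequality into $\nu_{\phi_0}(\phi)<\nu(\phi)$, which contradicts the already established equality $\nu_{\phi_0}=\nu$.

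The main obstacle here is essentially nonexistent: once Propositions \ref{dos} and \ref{eporder} together with Lemma \ref{minimal0} are in hand, the theorem reduces to a short bookkeeping argument. The only point deserving attention is the observation that $\nu(\phi)<\infty$ automatically holds in this setting, which is what makes the strict comparison in Proposition \ref{eporder} meaningful and produces a genuine contradiction.
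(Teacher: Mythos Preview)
Your proof is correct and follows essentially the same approach as the paper. The paper presents Theorem \ref{mu=nu} as a summary of the preceding results without an explicit proof, and your argument is precisely the intended deduction: Proposition \ref{dos} with $\mu=\nu$ handles the forward direction and $\nu_\phi=\nu$ via Lemma \ref{minimal0}, while the converse is the exact analogue of the proof of Lemma \ref{tres} (choose $\phi_0$ of minimal degree, compare $\ep$-values, and invoke Proposition \ref{eporder} to reach a contradiction with $\nu_{\phi_0}=\nu$).
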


By Theorem \ref{charKP}, two $\mu$-equivalent MLV key polynomials for $\mu$ have the same degree. Hence, the next result follows immediately from Theorems \ref{main} and \ref{mu=nu}.

\begin{corollary}\label{classes}
 Suppose that $\mu\le\nu$ and $\phi\in\kpm$. If $\phi$ is a key polynomial for $\nu$, then all polynomials in $[\phi]_\mu$ are key polynomials for $\nu$ too. 
\end{corollary}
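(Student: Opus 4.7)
The plan is to split on whether $\mu=\nu$ or $\mu<\nu$ and reduce everything to the two structure theorems just proved, using the key fact from Theorem \ref{charKP} that $\mu$-equivalent MLV key polynomials share the same degree.

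First I would fix $\phi'\in[\phi]_\mu$, so that $\phi'\in\kpm$ with $\phi'\snu\phi$ and, by Theorem \ref{charKP}, $\deg(\phi')=\deg(\phi)$.

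In the case $\mu=\nu$, I invoke Theorem \ref{mu=nu}: since $\phi$ is assumed to be a key polynomial for $\nu$, we have $\deg(\phi)=\deg(\nu)$; hence $\deg(\phi')=\deg(\nu)$ too, and Theorem \ref{mu=nu} applied to $\phi'$ yields that $\phi'$ is a key polynomial for $\nu$.

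In the case $\mu<\nu$, I apply Theorem \ref{main} to $\phi$, obtaining two alternatives. If $\phi\in\phmn$, then since $\phmn$ is a single $\mu$-equivalence class (that is, $\phmn=[\phi]_\mu$ by the MacLane--Vaqui\'e result recalled in section \ref{subsecChains}), we also have $\phi'\in\phmn$, so $\phi'$ is a key polynomial for $\nu$ by the first clause of Theorem \ref{main}. If instead $\phi\notin\phmn$ and $\deg(\phi)=\deg(\mu)$, then $\deg(\phi')=\deg(\mu)$ as well; moreover $[\phi]_\mu\ne\phmn$, and since distinct $\mu$-equivalence classes in $\kpm$ are disjoint, $\phi'\notin\phmn$ either, so the second clause of Theorem \ref{main} gives that $\phi'$ is a key polynomial for $\nu$.

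There is no real obstacle here: once the two theorems are in hand, the only nontrivial ingredient is that $\phmn$ is a full $\mu$-equivalence class, which is precisely the content of \cite[Cor.2.6]{MLV} already cited in section \ref{subsecChains}, together with the degree-preservation statement of Theorem \ref{charKP}.
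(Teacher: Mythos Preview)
Your proof is correct and follows essentially the same approach as the paper: the paper's argument is the one-line observation that Theorem \ref{charKP} gives $\deg(\phi')=\deg(\phi)$, whence the result follows immediately from Theorems \ref{main} and \ref{mu=nu}; you have simply unpacked this into the explicit case split. One small slip: you wrote $\phi'\snu\phi$ where you meant $\phi'\sim_\mu\phi$, since $[\phi]_\mu$ is a $\mu$-equivalence class.
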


\begin{corollary}\label{Qmaxep}
Let $\phi\in\kpn$ of minimal degree. Then, $\ep(\phi)\ge\ep(f)$ for all $f\in\kx$.  
\end{corollary}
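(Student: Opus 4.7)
The plan is to reduce the statement to a direct combination of Theorem \ref{mu=nu} and part (i) of Proposition \ref{nuQnu}. First I would observe that, since $\phi\in\kpn$ is non-empty, the valuation $\nu$ has trivial support, so $\nu(f)<\infty$ for every nonzero $f\in\kx$ and $\ep(f)$ is a genuine element of $(\gn)_\Q$ for every $f\in\kx\setminus K$.

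Next, because $\phi$ has minimal degree among MLV key polynomials for $\nu$, we have $\deg(\phi)=\deg(\nu)$ by Definition \ref{rele}. Applying Theorem \ref{mu=nu} then yields two things at once: $\phi$ is an (abstract) key polynomial for $\nu$, and $\nu_\phi=\nu$.

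Finally, for an arbitrary non-constant $f\in\kx$, I would invoke Proposition \ref{nuQnu}(i) with $Q=\phi$: for every $b\in\N$,
$$
\nu_\phi(\pb{f})\ge \nu_\phi(f)-b\,\ep(\phi).
$$
Substituting $\nu_\phi=\nu$ and rearranging gives $\frac{\nu(f)-\nu(\pb{f})}{b}\le \ep(\phi)$ for every $b\in\N$, and taking the maximum over $b$ yields $\ep(f)\le\ep(\phi)$ by the very definition of $\ep(f)$.

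There is essentially no obstacle here: all the real content has already been packaged into Theorem \ref{mu=nu} (which both promotes $\phi$ to an abstract key polynomial and identifies $\nu_\phi$ with $\nu$) and into the inequality of Proposition \ref{nuQnu}(i) (which bounds the values of formal derivatives in terms of $\ep(\phi)$). The corollary is simply the concatenation of these two facts, with no need for the multiplicativity of $\ep$ or for any case analysis on $\deg(f)$ relative to $\deg(\phi)$.
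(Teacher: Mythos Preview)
Your proposal is correct and follows exactly the same approach as the paper: invoke Theorem \ref{mu=nu} to get that $\phi$ is an abstract key polynomial with $\nu_\phi=\nu$, then apply Proposition \ref{nuQnu}(i). The paper's proof is just a terser version of what you wrote.
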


\begin{proof}
By Theorem \ref{mu=nu}, $\phi$ is a key polynomial for $\nu$ and $\nu_\phi=\nu$.

%Let $f\in\kx$ be a polynomial of minimal degree satisfying $\ep(f)>\ep(Q)$. For any $g\in\kx$ with $\deg(g)<deg(f)$ we have$$\ep(g)\le\ep(Q)<\ep(f).$$Thus, $f$ is a key polynomial for $\nu$. By Proposition \ref{eporder}, we get a contradiction: $\nu<\nu_f$.

The result follows from (i) of Proposition \ref{nuQnu}.
\end{proof}\e

Also, these results lead to another characterization of abstract key polynomials.

\begin{theorem}\label{More}
 Let $\nu$ be a valuation on $\kx$, and $Q\in\kx$ a monic polynomial.
 The following conditions are equivalent.
 
 \begin{enumerate}
 \item  $Q$ is a key polynomial for $\nu$.
 \item $\nu_Q$ is a valuation and either $\supp(\nu)=Q\kx$, or $Q$ is a MLV key polynomial for $\nu_Q$ of minimal degree.
 \item  $\nu_Q$ is a valuation and $Q$ has minimal degree among all monic polynomials $f\in\kx$ satisfying $\nu_f=\nu_Q$.
 \end{enumerate}
\end{theorem}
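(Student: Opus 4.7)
The plan is to prove the cycle $(1)\Rightarrow(2)\Rightarrow(3)\Rightarrow(1)$. The equivalence $(1)\Leftrightarrow(2)$ is almost automatic from earlier results: $(1)\Rightarrow(2)$ combines Proposition~\ref{vQval} with Proposition~\ref{A->MLV}, and in the case $\nu(Q)=\infty$ the identification $\supp(\nu)=Q\kx$ follows from the irreducibility of $Q$ (Corollary~\ref{maxepfg}) together with Corollary~\ref{units}, which makes every nonzero polynomial of degree strictly less than $\deg(Q)$ have a unit initial form, hence lie outside $\supp(\nu)$. Conversely $(2)\Rightarrow(1)$ is immediate: if $\supp(\nu)=Q\kx$ then $Q$ is a key polynomial by the examples of Section~\ref{secAKP}, while in the MLV case Proposition~\ref{dos} applied with $\mu=\nu_Q\le\nu$ gives that $Q$ is a key polynomial for $\nu$.

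For $(2)\Rightarrow(3)$ I assume a monic $f$ of degree strictly less than $\deg(Q)$ satisfies $\nu_f=\nu_Q$ and derive a contradiction in each subcase. In the support case the identity $\nu_f(Q)=\nu_Q(Q)=\infty$ combined with $\nu(f)<\infty$ forces every coefficient $a_s$ in the $f$-expansion $Q=\sum a_s f^s$ into $\supp(\nu)=Q\kx$; irreducibility of $Q$ and $\deg(a_s)<\deg(Q)$ then force $a_s=0$, giving $Q=0$. In the MLV case, Theorem~\ref{mu=nu} promotes $Q$ to a key polynomial for $\nu_Q$; Corollary~\ref{units} applied within $\nu_Q$ then makes $\op{in}_{\nu_Q}f$ a unit, contradicting the automatic $\nu_f$-minimality of $f$ guaranteed by Lemma~\ref{minimal0}, which under $\nu_f=\nu_Q$ forbids $\op{in}_{\nu_Q}f$ from dividing $\op{in}_{\nu_Q}1$.

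For $(3)\Rightarrow(1)$, if $\nu(Q)=\infty$ the $Q$-expansion argument gives $\nu_Q=\nu$, and any nonzero element of $\supp(\nu)$ of degree strictly less than $\deg(Q)$ would produce a smaller-degree $f$ with $\nu_f=\nu_Q$, violating~(3); this forces $\supp(\nu)=Q\kx$, so $Q$ is a key polynomial by the examples of Section~\ref{secAKP}. If $\nu(Q)<\infty$, I invoke the standard fact that $\nu_Q$ being a valuation forces $Q$ to be an MLV key polynomial for $\nu_Q$. When $\nu_Q=\nu$, any minimal-degree MLV key $\phi_0$ for $\nu$ satisfies $\nu_{\phi_0}=\nu=\nu_Q$ by Lemma~\ref{minimal0}, so~(3) gives $\deg(Q)=\deg(\nu)$ and Theorem~\ref{mu=nu} yields~(1). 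When $\nu_Q<\nu$, Lemma~\ref{zero} applied to $Q$ places $Q\notin\Phi_{\nu_Q,\nu}$ with $\deg(Q)\le\deg(\Phi_{\nu_Q,\nu})$; the same lemma applied to a hypothetical MLV key $Q_0$ of minimal degree less than $\deg(Q)$ gives either $\nu_{Q_0}=\nu_Q$ (violating~(3)) or $Q_0\in\Phi_{\nu_Q,\nu}$, whence $\deg(\Phi_{\nu_Q,\nu})=\deg(\nu_Q)$ and Theorem~\ref{charKP} collapses the minimal-degree MLV class into $\Phi_{\nu_Q,\nu}$, forcing $Q$ itself into $\Phi_{\nu_Q,\nu}$, a contradiction; thus $\deg(Q)=\deg(\nu_Q)$ and Theorem~\ref{main}(2) identifies $Q$ as a key polynomial for $\nu$.

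The main obstacle is the unstated MLV-theoretic fact invoked in $(3)\Rightarrow(1)$: that $\nu_Q$ being a valuation with $\nu(Q)<\infty$ forces $Q$ to be an MLV key polynomial for $\nu_Q$. This is a standard consequence of the self-truncation identity $(\nu_Q)_Q=\nu_Q$ together with the MacLane-Vaqui\'e machinery, but it is not explicitly recorded in the excerpt and will require an external reference such as \cite{KP} or \cite{Vaq}.
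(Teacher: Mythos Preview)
Your cycle $(1)\Leftrightarrow(2)$ and the implication $(2)\Rightarrow(3)$ are correct, though your $(2)\Rightarrow(3)$ argument is more roundabout than the paper's: the paper simply observes that any monic $f$ with $\nu_f=\nu_Q$ is $\nu_Q$-minimal (Lemma~\ref{minimal0}), and then invokes \cite[Prop.~3.7]{KP} to conclude that $\deg(Q)\mid\deg(f)$.

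The real problem is in $(3)\Rightarrow(1)$. The ``standard fact'' you invoke --- that $\nu_Q$ being a valuation with $\nu(Q)<\infty$ forces $Q$ to be an MLV key polynomial for $\nu_Q$ --- is \emph{false}, and no reference to \cite{KP} or \cite{Vaq} will supply it. Take $K=\Q_p$ with $p$ odd, $\nu$ the Gauss valuation, and $Q=x^2-1$. Then $Q$ is $\nu$-minimal (one checks directly, or via \cite[Thm.~3.9]{KP} since $\nu(Q)/\deg(Q)=0=\nu(x)/\deg(x)$), so $\nu_Q=\nu$ is a valuation; but $\inn Q=(y-1)(y+1)$ in $\Delta_\nu\simeq\F_p[y]$ generates a non-prime ideal, so $Q\notin\kp(\nu_Q)$. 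Since your entire case analysis for $\nu(Q)<\infty$ (both $\nu_Q=\nu$ and $\nu_Q<\nu$, including the appeal to Lemma~\ref{zero} and Theorem~\ref{main}) is predicated on $Q\in\kp(\nu_Q)$, the argument collapses.

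The paper closes the gap by proving $(3)\Rightarrow(2)$ instead, and it uses the \emph{minimality} hypothesis in (3), not merely that $\nu_Q$ is a valuation. The key observation is that any monic $\nu_Q$-minimal polynomial $g$ of degree $<\deg(Q)$ automatically satisfies $\nu_g=\nu_Q$ (because $\nu_Q(a)=\nu(a)$ for all $a$ of degree $<\deg(Q)$, hence $\nu_Q(a_sg^s)=\nu(a_sg^s)$ in the $g$-expansion), contradicting (3). Thus $Q$ has minimal degree among $\nu_Q$-minimal polynomials; then \cite[Prop.~3.7]{KP} writes $Q=Q_0+a$ with $Q_0$ an MLV key polynomial of minimal degree and $\nu_Q(a)\ge\nu_Q(Q_0)$, and Theorem~\ref{charKP} finishes. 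This is the missing idea in your attempt.
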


\begin{proof}
(1) $\Rightarrow$ (2) follows from Propositions \ref{vQval} and \ref{A->MLV}.

(2) $\Rightarrow$ (1) follows from Proposition \ref{dos}.

(2) $\Rightarrow$ (3). Let $f\in\kx$ be a monic polynomial such that $\nu_f=\nu_Q$. If $\supp(\nu)=Q\kx$, then $\nu(f)=\nu_f(f)=\nu_Q(f)=\infty$, so that $f$ is a multiple of $Q$. 

Suppose that $Q$ is a MLV key polynomial for $\nu_Q$ of minimal degree. By Lemma \ref{minimal0}, $f$ is $\nu_Q$-minimal; thus, $\deg(f)$ is a multiple of $\deg(Q)$ \cite[Prop.3.7]{KP}.

(3) $\Rightarrow$ (2). Suppose $\supp(\nu)\ne Q\kx$. Then, (3) implies that $\nu(Q)<\infty$. By Lemma \ref{minimal0}, $Q$ has minimal degree among all $\nu_Q$-minimal polynomials. 

Let $Q_0$ be a MLV key polynomial for $\nu_Q$ of minimal degree. By \cite[Prop.3.7]{KP}, $Q=Q_0+a$ for some $a\in\kx$ with $\deg(a)<\deg(Q_0)$ and $\nu_Q(a)\ge \nu_Q(Q_0)$. Hence, either $Q\sim_{\nu_Q}Q_0$ (if $\nu_Q(a)> \nu_Q(Q_0)$), or $\deg(R(Q))=1$ (if $\nu_Q(a)=\nu_Q(Q_0)$). By Theorem \ref{charKP}, $Q$ is a MLV key polynomial for $\nu_Q$ of minimal degree.
\end{proof}\e

A key polynomial $Q$ for $\nu$ is said to be  \emph{maximal} if $\nu_Q=\nu$. These key polynomials admit the following characterization.

\begin{corollary}\label{More2}
 Let $\nu$ be a valuation on $\kx$, and $Q\in\kx$ a monic polynomial.
 The following conditions are equivalent.
 
 \begin{enumerate}
 \item  $Q$ is a maximal key polynomial for $\nu$.
 \item Either $\supp(\nu)=Q\kx$, or $Q$ is a MLV key polynomial for $\nu$ of minimal degree.
 \item  $\ep(Q)\ge \ep(f)$ for all polynomials $f\in\kx$, and $Q$ has minimal degree with this property.
 \end{enumerate}
\end{corollary}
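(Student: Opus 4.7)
The plan is to deduce this characterization from the more general Theorem \ref{More} by specializing to the case $\nu_Q=\nu$, combined with Corollary \ref{Qmaxep} for the $\ep$-bound. I would organize the argument as the chain (2) $\Rightarrow$ (1) $\Rightarrow$ (3) $\Rightarrow$ (2).

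For (2) $\Rightarrow$ (1): when $\supp(\nu)=Q\kx$, the $Q$-expansion of any $f\in\kx$ has all terms with positive $Q$-power in the support, so $\nu_Q(f)=\nu(a_0)=\nu(f)$, and $Q$ is a key polynomial by the examples listed in section \ref{secAKP}. When $Q$ is a MLV key polynomial for $\nu$ of minimal degree, Theorem \ref{mu=nu} (applied with $\mu=\nu$) supplies both that $Q$ is a key polynomial and that $\nu_Q=\nu$.

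For (1) $\Rightarrow$ (3): the bound $\ep(Q)\ge\ep(f)$ is immediate when $\supp(\nu)=Q\kx$, since then $\ep(Q)=\infty$; otherwise $Q$ is a MLV key polynomial of minimal degree for $\nu$ by Theorem \ref{More}, and the bound follows from Corollary \ref{Qmaxep}. For the minimality of $\deg(Q)$, any monic $h$ with $0<\deg(h)<\deg(Q)$ and $\ep(h)\ge\ep(Q)$ would directly violate the defining property of the key polynomial $Q$.

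The main obstacle, which I expect to drive the argument, is (3) $\Rightarrow$ (2), and I would split it into two cases according to whether $\ep(Q)=\infty$ or $\ep(Q)<\infty$. If $\ep(Q)=\infty$, then $\nu(Q)=\infty$, so $\supp(\nu)$ is a nonzero prime ideal $\psi\kx$ with $\psi$ monic irreducible dividing $Q$; since $\ep(\psi)=\infty$ as well, the minimality in (3) forces $\psi=Q$, yielding $\supp(\nu)=Q\kx$. If $\ep(Q)<\infty$, I would first verify that $Q$ itself is a key polynomial for $\nu$: any non-constant $h$ with $\deg(h)<\deg(Q)$ satisfies $\ep(h)\le\ep(Q)$ by the max property, and equality would, after passing to the monic scaling of $h$, contradict the minimality of $\deg(Q)$ in (3). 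Theorem \ref{More} then gives that $Q$ is a MLV key polynomial for $\nu_Q$ of minimal degree, since the alternative $\supp(\nu_Q)=Q\kx$ is excluded by $\nu_Q(Q)=\nu(Q)<\infty$. The remaining, and most delicate, step is to upgrade this to $\nu_Q=\nu$: if instead $\nu_Q<\nu$, picking $\phi\in\Phi_{\nu_Q,\nu}$ would, by Lemma \ref{u}, produce a further key polynomial $\phi$ for $\nu$, while Lemma \ref{zero} gives $\nu_\phi>\nu_Q$. Proposition \ref{eporder} would then force $\ep(\phi)>\ep(Q)$, contradicting the maximality of $\ep(Q)$ assumed in (3).
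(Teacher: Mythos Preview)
Your proof is correct and follows essentially the same approach as the paper: the same tools (Theorem \ref{More}, Corollary \ref{Qmaxep}, Proposition \ref{eporder}, and the definition of key polynomial) drive the argument, just cycled in a different order. The paper proves (1)$\Leftrightarrow$(2) directly from Theorem \ref{More}, then (2)$\Rightarrow$(3)$\Rightarrow$(1); your (3)$\Rightarrow$(2) step is in fact more explicit than the paper's (3)$\Rightarrow$(1), which simply asserts ``$\nu_Q=\nu$ by Proposition \ref{eporder}'' without spelling out (as you do) the role of Lemma \ref{u} and Lemma \ref{zero} in producing a key polynomial $\phi$ with $\ep(\phi)>\ep(Q)$ when $\nu_Q<\nu$.
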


\begin{proof}
Theorem \ref{More} shows that (1) and (2) are equivalent. \e

(2) $\Rightarrow$ (3). Corollary \ref{Qmaxep} shows that $\ep(Q)\ge \ep(f)$ for all polynomials $f\in\kx$.

Since $Q$ is a key polynomial for $\nu$, for any polynomial $f$ of smaller degree $\ep(f)$ cannot be maximal because $\ep(f)<\ep(Q)$. \e

(3) $\Rightarrow$ (1). By definition, a monic polynomial of minimal degree for which $\ep(Q)$ takes a maximal value is a key polynomial for $\nu$.
Finally, $\nu_Q=\nu$ by Proposition \ref{eporder}.
\end{proof}\e

As shown in the Introduction, only the valuations $\nu$ of finite depth admit maximal key polynomials. %\cite[Thm.4.8]{MLV}. Equivalently, $\nu$ has finite depth if and only if $\supp(\nu)\ne0$ or $\kpn\ne\emptyset$ \cite[Lem.-Def.4.9]{MLV}. 

\section{Comparison of MacLane-Vaqui\'e and abstract limit key polynomials}\label{secLKP}

\subsection{MacLane-Vaqui\'e limit key polynomials}\label{subsecLMLV}
Let us recall the definition of MacLane-Vaqui\'e (MLV) limit key polynomials \cite[Sec.2]{Vaq}, \cite[Sec.3]{MLV}.

Let $\rho$ be a valuation on $\kx$ admitting MLV key polynomials.

\begin{definition}
A \emph{continuous MacLane chain} of $\rho$, of stable degree $m$, is a countably infinite chain of ordinary augmentations:  
\begin{equation}\label{chainrho}
\rho=\rho_0\ \stackrel{\chi_1,\be_1}\lra\ \rho_1\ \lra\ \cdots\ \stackrel{\chi_i,\be_i}\lra\ \rho_i\ \lra\ \cdots\qquad\quad \rho_i=[\rho_{i-1};\chi_i,\be_i],\quad \forall\,i\ge1,
\end{equation}
such that  $\deg(\chi_i)=m$ and $\chi_{i+1}\nmid_{\rho_i}\chi_i$,  for all $i\ge1$.
\end{definition}

By \cite[Prop.7.2]{KP} each $\chi_i\in\op{KP}(\rho_{i-1})$ becomes a MLV key polynomial of minimal degree of $\rho_i$. In particular, $\deg(\rho_i)=m$ for all $i\ge1$.

A polynomial $f\in\kx$ is \emph{stable} with respect to the chain,  if for some index $i_0$ we have
$$
\rho_{i}(f)=\rho_{i_0}(f), \quad\ \forall\,i\ge i_0.
$$
This stable value is denoted $\rhi(f)$.

By the equivalence (\ref{transitive}), a non-stable polynomial $f$ satisfies necessarily 
$$
\rho_i(f)<\rho_j(f),\qquad \forall\,i<j.
$$
Let $\mi$ be the minimal degree of a non-stable polynomial. We agree that $\mi=\infty$ if all polynomials are stable.

The following properties hold for all continuous MacLane chains \cite[Lem.3.3]{MLV}
\begin{itemize}
 \item $\mi\ge m$.
 \item All polynomials $\chi_i$ are stable.
 \item For all $i\ge1$, $\rho_i$ is residually transcendental and $\g_{\rho_i}=\g_{\rho_1}$. 
%\item  coincides with the set of stable values of all stable polynomials.
\end{itemize}

The common value grup $\gi:=\g_{\rho_i}$ for all $i\ge1$ is called the \emph{stable value group} of the continuous MacLane chain. Note that $\be_i\in\gi$ for all $i\ge1$.\e

\noindent{\bf Remark. }In \cite[Sec.3]{MLV} it was supposed that $\rho$ is residually transcendental too, and $\g_{\rho}=\gi$. We omit these conditions on $\rho$ because they are irrelevant for the analysis of the limit behaviour of the chain.\e

Any continuous MacLane chain $\left(\rho_i\right)_{i\ge0}$ falls in one of the following three cases:

\begin{enumerate}
\item[(a)] It has a \emph{stable limit}.
That is, $\mi=\infty$ and the function $\rhi$ is a valuation on $\kx$. This valuation is commensurable and satisfies $\kp(\rhi)=\emptyset$.\e

\item[(b)] It is \emph{inessential}.
That is, $\mi=m$.\e

\item[(c)] It is \emph{essential}.
That is, $\mi>m$.\e
\end{enumerate}

Let $\nu$ be a valuation on $\kx$ such that $\rho_i<\nu$ for all $i\ge0$.

If $\left(\rho_i\right)_{i\ge0}$ is inessential and $f\in\kx$ is a non-stable polynomial of degree $m$, then the ordinary augmentation $\mu=[\rho; f,\nu(f)]$ satisfies 
$$
\rho_i<\mu\le\nu,\qquad \forall\,i\ge0.
$$
In other words, $\mu$ is closer to $\nu$ than any $\rho_i$, and we may access to $\mu$ from $\rho$ by a single augmentation. In the terminology of \cite{MLV},  we may avoid any reference to the continuous MacLane chain $\left(\rho_i\right)_{i\ge0}$ along the process of constructing a MacLane-Vaqui\'e chain of valuations for $\nu$.

In the terminology of \cite{hmos}, all key polynomials $\chi_i$ may be replaced by the single key polynomial $f$ in any complete system of key polynomials for $\mu$.

This justifies why we call it ``inessential".\e

Only the essential continuous MacLane chains admit (non-fake) limit key polynomials. From now on, we suppose that our chain $(\rho_i)_{i\ge0}$ is essential.\e

We define the set of MLV limit key polynomials for $(\rho_i)_{i\ge0}$:
$$
\kpi=\kpi\left((\rho_i)_{i\ge0}\right),
$$
as the set of monic non-stable polynomials in $\kx$ of minimal degree $\mi$.

%Let $\mi$ be the common degree of all MLV key polynomials. By our assumption, $\mi>m$. %\e

Take $\phi\in\kpi$.
Let $\gi\hookrightarrow \La$ be an embedding of ordered groups, and choose $\ga\in\La\infty$ such that 
$$\ga>\rho_i(\phi),\quad\ \forall\,i\ge0.$$  
We may consider a limit augmentation
$$
\mu_{\phi,\ga}=[(\rho_i)_{i\ge0};\phi,\ga],
$$
which on $\phi$-expansions $f=\sum_{0\le s}a_s\phi^s$ acts as follows:
$$
\mu_{\phi,\ga}(f)=\min\{\rhi(a_s)+s\ga\mid 0\le s\}=\min\{\mu_{\phi,\ga}\left(a_s\phi^s\right)\mid 0\le s\}.
$$

This function $\mu_{\phi,\ga}$ is a valuation on $\kx$ which satisfies $\mu_{\phi,\ga}>\rho_i$ for all $i\ge0$. \e

%\begin{proposition}\label{LMLV->AKP}Let $(\rho_i)_{i\ge0}$ be a continuous MacLane chain as in (\ref{chainrho}), staisfying condition (c).\end{proposition}
Let $\nu$ be a valuation on $\kx$ such that $\nu>\rho_i$ for all $i\ge0$. 

For every stable polyomial $f$ one has $\nu(f)=\rhi(f)$. Thus, $\nu(\chi_i)=\be_i$ for all $i\ge 1$.

A monic polynomial $f\in\kx$ of degree $m$ such that $\nu(f)>\be_i$ for all $i\ge 1$ would be non-stable. 
In fact, if $f$ were stable, there would exist an index $i$ such that $\rho_i(f)=\nu(f)$. This is impossible because $\rho_i(f)\le\rho_i(\chi_i)=\be_i$ by \cite[Thm.3.9]{KP}.

Since we are assuming that our continuous MacLane chain is essential, we have  
\begin{equation}\label{betaicofinal}
(\be_i)_{i\ge0}\quad\mbox{ is cofinal in the set }\quad\left\{\nu(f)\mid f\in\kx\mbox{ monic, }\deg(f)=m\right\}.
\end{equation}

It is easy to see that any MLV limit key polynomial $\phi\in\kpi$ is a key polynomial for $\nu$.

In fact, take $\ga=\nu(\phi)$. For all pair of indices $i<j$, we have $\rho_i(\phi)<\rho_j(\phi)\le\nu(\phi)=\ga$. The limit augmented valuation $\mu_{\phi,\ga}$ clearly satisfies $\mu_{\phi,\ga}\le\nu$. 
By \cite[Cor.7.13]{KP}, $\phi$ is a MLV key polynomial for $\mu_{\phi,\ga}$ of minimal degree. Thus, our claim follows from Proposition \ref{dos}.

\subsection{Abstract limit key polynomials}
Let $\nu$ be a valuation on $\kx$.
Novacoski and Spivakovsky define in \cite{NS2018} an (abstract) limit key  polynomial for $\nu$ as a monic polynomial $Q\in\kx$ for which there exists a key polynomial $Q_-$ satisfying the following conditions.
\begin{enumerate}
\item[(K1)] $\deg(Q_-)=\deg\left(\Phi_{\nu_{Q_-},\nu}\right)$. 
\item[(K2)] the set $\{\nu(\chi)\mid\chi\in \Phi_{\nu_{Q_-},\nu}\}$ has no maximal element.
\item[(K3)] $\nu_{\chi}(Q)<\nu(Q)$ for all $\chi\in\Phi_{\nu_{Q_-},\nu}$.
\item[(K4)] $Q$ has minimal degree among all polynomials satisfying (K3).
\end{enumerate}

\begin{proposition}\label{LMLV->LAKP}
Let $(\rho_i)_{i\ge0}$ be an essential continuous MacLane chain as in (\ref{chainrho}).
Let $\nu$ be a valuation on $\kx$ such that $\nu>\rho_i$ for all $i\ge0$. Then, all MLV limit key polynomials for $(\rho_i)_{i\ge0}$ are limit key polynomials for $\nu$.   
\end{proposition}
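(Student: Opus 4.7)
The plan is to exhibit, for any $\phi\in\kpi$, an auxiliary key polynomial $Q_-$ for $\nu$ satisfying the four conditions (K1)--(K4) with $Q=\phi$. I would take $Q_-=\chi_i$ for any fixed $i\ge 1$. Since $\chi_i$ is a MLV key polynomial of minimal degree $m$ for $\rho_i$, Proposition \ref{dos} applied to $\mu=\rho_i<\nu$ shows that $\chi_i$ is a key polynomial for $\nu$. Moreover $\rho_i(\chi_i)=\be_i=\nu(\chi_i)$, so $\chi_i\not\in\Phi_{\rho_i,\nu}$; case (2) of Theorem \ref{main} then gives $\nu_{\chi_i}=\rho_i$, and hence $\Phi_{\nu_{Q_-},\nu}=\Phi_{\rho_i,\nu}$.

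The next observation is that for every $j\ge i$ the polynomial $\chi_{j+1}$ itself lies in $\Phi_{\rho_i,\nu}$: it is monic of degree $m=\deg(\rho_i)$, and the chain of inequalities
$$
\rho_i(\chi_{j+1})\le \rho_j(\chi_{j+1})<\be_{j+1}=\nu(\chi_{j+1})
$$
places it there. This delivers (K1) at once, since $\deg(\Phi_{\rho_i,\nu})=m=\deg(\chi_i)$, and it delivers (K2) because the values $\nu(\chi_{j+1})=\be_{j+1}$, for $j\ge i$, form a strictly increasing sequence inside $\{\nu(\chi)\mid\chi\in\Phi_{\rho_i,\nu}\}$; the strict growth is the standard fact for continuous MacLane chains, and one can also read it from the cofinality property (\ref{betaicofinal}).

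To verify (K3), I would fix any $\chi\in\Phi_{\rho_i,\nu}$ and use that $\deg(\chi)=m<\mi$ forces $\chi$ to be stable; pick $j$ large enough so that $\rho_j(\chi)=\nu(\chi)$. The coefficients $a_s$ in the $\chi$-expansion $f=\sum_s a_s\chi^s$ have degree below $m$ and are thus also stable. A direct term-by-term comparison then yields
$$
\rho_j(f)\ge\min_s\{\rho_j(a_s)+s\rho_j(\chi)\}=\min_s\{\nu(a_s)+s\nu(\chi)\}=\nu_\chi(f),
$$
that is, $\nu_\chi\le\rho_j$; combining this with the non-stability of $\phi$ gives $\nu_\chi(\phi)\le\rho_j(\phi)<\nu(\phi)$.

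Finally, (K4) will follow from the same device: given $f\in\kx$ with $\deg(f)<\mi$, stability of $f$ yields an index $j$ with $\rho_{j+1}(f)=\nu(f)$, and the analogous computation applied to the $\chi_{j+1}$-expansion identifies $\nu_{\chi_{j+1}}$ with $\rho_{j+1}$ (this time as an equality, since both reduce to $\min_s\{\nu(a_s)+s\be_{j+1}\}$ by the very definition of the augmentation $\rho_{j+1}=[\rho_j;\chi_{j+1},\be_{j+1}]$). Hence $\nu_{\chi_{j+1}}(f)=\rho_{j+1}(f)=\nu(f)$, which violates the condition of (K3) for $f$. The main technical point throughout is precisely this comparison of a truncation $\nu_\chi$ with a suitable $\rho_j$ from the chain; both versions rest on the routine observations that $\rho_l$ and $\nu$ coincide on polynomials of degree less than $m$, and that each degree-$m$ key polynomial $\chi_k$ is stable with $\nu(\chi_k)=\be_k$.
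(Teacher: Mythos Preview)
Your proof is correct and follows the same strategy as the paper's: take $Q_-$ to be one of the $\chi_i$, identify $\nu_{Q_-}$ with $\rho_i$, and verify (K1)--(K4) using the cofinality property (\ref{betaicofinal}) together with the non-stability of $\phi$. The only cosmetic differences are that the paper fixes $Q_-=\chi_1$ and appeals to Lemma~\ref{zero} to compare $\nu_\chi$ with an augmentation $[\rho_1;\chi,\nu(\chi)]<\rho_i$, whereas you allow any $\chi_i$ and obtain $\nu_\chi\le\rho_j$ by a direct term-by-term estimate on the $\chi$-expansion.
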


\begin{proof}
Let  $\phi\in\kpi$. We may take $Q_-=\chi_1$, which is a key polynomial by Proposition \ref{dos}. In section \ref{subsecChains} we saw that
$$
[\chi_2]_{\rho_1}=\Phi_{\rho_1,\rho_2}=\Phi_{\rho_1,\rho_i}=\Phi_{\rho_1,\nu},\qquad \forall\,i\ge 1.
$$
Since $[\chi_1]_{\rho_1}\ne [\chi_2]_{\rho_1}$, Lemma \ref{zero} shows that $\nu_{\chi_1}=\rho_1$. Since, $\deg(\chi_1)=\deg(\chi_2)=\deg\left(\Phi_{\rho_1,\nu}\right)$, condition (K1) is satisfied.

Condition (K2) follows from (\ref{betaicofinal}). 

Take $\chi\in\Phi_{\nu_{\chi_1},\nu}=\Phi_{\rho_1,\nu}=[\chi_2]_{\rho_1}$. Let $\mu=[\rho_1;\chi,\nu(\chi)]$. Since $\chi$ is a MLV key polynomial of minimal degree for $\mu$, and $\chi\not\in\phmn$, we have $\nu_\chi=\mu$ by Lemma \ref{zero}. Since $\deg(\chi)=m$, the property (\ref{betaicofinal}) shows that there exists an index $i$ such that $\nu(\chi)<\be_i$. This implies $\mu<\rho_i$, and from this we deduce $\mu(\phi)\le\rho_i(\phi)<\nu(\phi)$. This proves (K3).

Finally, any monic polynomial $Q$ satisfying (K3) is non-stable, Thus, $\deg(Q)\ge \mi=\deg(\phi)$. This proves (K4).
\end{proof}\e

The converse statement holds too.

\begin{proposition}\label{LAKP->LMLV}
Let $Q\in\kx$ be a limit key polynomial for $\nu$. Then, $Q$ is a MLV limit key polynomial for some essential continuous MacLane chain $(\rho_i)_{i\ge0}$.   
\end{proposition}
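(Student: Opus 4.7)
The plan is to construct a continuous MacLane chain by setting $\rho_0=\nu_{Q_-}$ and successively augmenting via elements $\chi_i\in\Phi_{\rho_0,\nu}$ with strictly increasing $\nu$-values. Set $m=\deg(Q_-)$ and $\beta_i=\nu(\chi_i)$; I aim to define $\rho_i=[\rho_{i-1};\chi_i,\beta_i]$. Note that by Theorem \ref{More}, $Q_-$ is an MLV key polynomial of minimal degree $m$ for $\rho_0$, and by (K1), every element of $\Phi_{\rho_0,\nu}$ has degree $m$. A useful preliminary observation is that for $\chi,\chi'\in\Phi_{\rho_0,\nu}$ with $\nu(\chi)<\nu(\chi')$, writing $\chi'=\chi+a$ with $\deg(a)<m$ forces $\nu(a)=\nu(\chi)$ (otherwise the ultrametric inequality contradicts $\nu(\chi')>\nu(\chi)$); the $\chi$-expansion then yields $\nu_\chi(\chi')=\nu(\chi)<\nu(\chi')$, so by Proposition \ref{eporder}, $\nu_\chi<\nu_{\chi'}$ strictly, and in particular $\nu_\chi(Q)\le\nu_{\chi'}(Q)$.

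For the choice of $(\chi_i)$: using (K2), I pick them so that $(\beta_i)$ is strictly increasing and cofinal. I additionally want $\gamma_i:=\nu_{\chi_i}(Q)$ to strictly increase, which will guarantee that $Q$ is non-stable in the chain. This is possible exactly when the set $\{\nu_\chi(Q):\chi\in\Phi_{\rho_0,\nu}\}$ admits no maximum below $\nu(Q)$ --- the main technical point. I would prove this by contradiction from (K4): were $\alpha<\nu(Q)$ such a maximum, attained at some $\chi^*$, then monotonicity would force $\nu_\chi(Q)=\alpha$ for every $\chi$ cofinal above $\chi^*$, and a chain built from such $\chi_i$ would stabilize $Q$ at $\alpha$. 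By (K4), each polynomial $f$ of degree $<\deg(Q)$ admits some $\chi'$ with $\nu_{\chi'}(f)=\nu(f)$, which by monotonicity implies $\rho_\infty(f)=\nu(f)$. The resulting $\rho_\infty$ would then agree with $\nu$ below degree $\deg(Q)$ and have $Q$ as an MLV key polynomial of minimal degree; from the $\chi^*$-expansion of $Q$ one would extract a polynomial of strictly smaller degree satisfying (K3), contradicting (K4).

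For the chain verification, I argue inductively that $\chi_i\in\Phi_{\rho_{i-1},\nu}$. Writing $\chi_i=\chi_{i-1}+a_i$ with $\deg(a_i)<m$ and $\nu(a_i)=\beta_{i-1}$ (preliminary observation), the $\rho_{i-1}$-minimality of $\chi_{i-1}$ (Lemma \ref{minimal0}) together with $\rho_{i-1}(a_i)=\nu(a_i)$ (valid as $\deg(a_i)<m$) gives $\rho_{i-1}(\chi_i)=\beta_{i-1}<\beta_i$, so the augmentation $\rho_i=[\rho_{i-1};\chi_i,\beta_i]$ is valid, and a direct expansion argument identifies $\rho_i$ with $\nu_{\chi_i}$. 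Non-divisibility $\chi_{i+1}\nmid_{\rho_i}\chi_i$ follows from an initial-term analysis in $\mathcal{G}_{\rho_i}$: since $\deg(a_{i+1})<m=\deg(\rho_i)$, $\operatorname{in}_{\rho_i}(a_{i+1})$ is a unit, while $\operatorname{in}_{\rho_i}(\chi_i)$ and $\operatorname{in}_{\rho_i}(\chi_{i+1})$ are non-unit primes of common degree $\beta_i$. An equality $\operatorname{in}_{\rho_i}(\chi_i)=v\,\operatorname{in}_{\rho_i}(\chi_{i+1})$ with $v$ homogeneous of degree zero would give $\operatorname{in}_{\rho_i}(a_{i+1})=(1-v)\operatorname{in}_{\rho_i}(\chi_{i+1})$, forcing $\operatorname{in}_{\rho_i}(\chi_{i+1})$ to be a unit --- a contradiction.

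To finish, I verify essentiality and the MLV limit key polynomial condition. Any monic $f$ of degree $\le m$ is stable with stable value $\nu(f)$: for $\deg(f)<m$ this is immediate since $\rho_i(f)=\rho_0(f)=\nu(f)$; for $\deg(f)=m$, the $\chi_i$-expansion $f=a_{0,i}+\chi_i$ and the $\rho_i$-minimality of $\chi_i$ yield $\rho_i(f)=\min(\nu(a_{0,i}),\beta_i)=\nu(f)$ once $\beta_i$ exceeds $\nu(f)$. The strict increase of $\gamma_i=\rho_i(Q)$ then makes $Q$ non-stable, so the chain is essential ($m_\infty>m$). For minimality: if $f$ is non-stable, $\nu_{\chi_i}(f)<\nu_{\chi_{i+1}}(f)\le\nu(f)$ infinitely often; by monotonicity and cofinality this extends to $\nu_\chi(f)<\nu(f)$ for every $\chi\in\Phi_{\rho_0,\nu}$, so $f$ satisfies (K3) and hence $\deg(f)\ge\deg(Q)$ by (K4). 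Thus $\deg(Q)=m_\infty$ and $Q\in\kpi((\rho_i)_{i\ge0})$, as required. The hardest step is the no-maximum claim for $\{\nu_\chi(Q)\}$ in the second paragraph, where the full force of (K4) is needed.
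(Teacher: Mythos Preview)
Your construction is the same as the paper's, but you take an unnecessary detour. The ``no-maximum'' claim in your second paragraph is not needed: once the $\beta_i$ are strictly increasing, the values $\gamma_i=\rho_i(Q)$ are \emph{automatically} strictly increasing. Indeed, since $\rho_i<\rho_{i+1}<\nu$, the transitivity property (\ref{transitive}) gives $\rho_i(Q)=\rho_{i+1}(Q)\Longleftrightarrow\rho_i(Q)=\nu(Q)$, and (K3) rules out the right-hand side. You invoke this very mechanism in your final paragraph (for a general non-stable $f$) but fail to apply it directly to $Q$. The paper's proof uses exactly this shortcut, obtaining non-stability of $Q$ in one line from (K3); no special choice of the $\chi_i$ beyond cofinality of $(\beta_i)$ is required.

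Your proposed justification of the ``no-maximum'' claim is moreover incomplete: the assertion that ``from the $\chi^*$-expansion of $Q$ one would extract a polynomial of strictly smaller degree satisfying (K3)'' is not substantiated --- it is unclear which polynomial you mean or why it would satisfy (K3). Since the claim itself is unnecessary, this gap does not affect the overall correctness once the detour is removed; with that excision your argument matches the paper's.
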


\begin{proof}
Let $Q_-\in\kx$ be a key polynomial satisfying conditions (K1)--(K4). Define $\rho=\rho_0=\nu_{Q_-}$ and $m=\deg(Q_-)$. %By Proposition \ref{A->MLV} $Q_-$ is a MLV key polynoomial for $\rho$ of minimal degree. 
All elements in $\Phi_{\rho,\nu}$ are MLV key polynomials for $\rho$. By (K1) these polynomials have degree $m$. 

By (K2) there exists a sequence $(\chi_i)_{i\ge1}$ of polynomials in $\Phi_{\rho,\nu}$ such that the sequence $\be_i=\nu(\chi_i)$ is strictly increasing  and cofinal in the set $\nu\left(\Phi_{\rho,\nu}\right)$. 

By Lemma \ref{u}, $\chi_i$ is a key polynomial for $\nu$, for all $i$. Let $\rho_i=\nu_{\chi_i}$ for all $i$. By construction, $\chi_j$ belongs to $\Phi_{\rho_i,\nu}$ for all $j>i$; thus, $\chi_j$ is a MLV key polynomial for $\rho_i$.

By the very definition of truncation and ordinary augmentation, $\rho_i=[\rho_{i-1};\chi_i,\be_i]$. Also, $\chi_i\not\in\Phi_{\rho_i,\nu}$ implies that $[\chi_i]_{\rho_i}\ne[\chi_{i+1}]_{\rho_i}$. 

Therefore, $(\rho_i)_{i\ge0}$ is a continuous MacLane chain of $\rho$, which is not inessential by (K2). 

By (K3) and (K4), $Q$ is non-stable of minimal degree. Hence,  $(\rho_i)_{i\ge0}$ is essential and $Q$ is a MLV key polynomial.

\end{proof}

\section{Invariants of limit key polynomials}\label{secInvLKP}

\subsection{Basic invariants of continuous MacLane chains}\label{subsecBasicInvs}
Consider a fixed essential continuous MacLane chain $\left(\rho_i\right)_{i\ge 0}$ as in (\ref{chainrho}). 

Our aim in this section is to study certain invariants of MLV limit  key polynomials, introduced in \cite[Sec.3]{Vaq2004} and \cite[Sec.4]{hmos}.

Let $\phi\in\kpi$, and let $n=\lfloor \mi/m\rfloor$. Denote the canonical $\chi_i$-expansion of $\phi$ by
$$
\phi=a_{n,i}\,\chi_i^n+a_{n-1,i}\,\chi_i^{n-1}+\cdots+a_{1,i}\,\chi_i+a_{0,i},\qquad \forall\,i\ge 1.
$$

The index $t_i(\phi)=\max\left(S_{\rho_i,\chi_i}(\phi)\right)$ is always positive and decreases as $i$ grows. Thus, it stabilizes for $i$ sufficiently large. The stable value is known as the \emph{numerical character} of $\phi$. We denote it by\footnote{  This invariant is denoted $t$ in \cite{Vaq2004} and $\delta$ in \cite{hmos}.} 
$$
\ti=\ti(\phi).
$$
This integer is a power of the characteristic exponent $p$ of the valued field $(K,v)$ \cite[Sec.7]{hmos}.

Let $i_0$ be an index which stabilizes $\ti$. Let us denote $t=\ti$ for simplicity. It is easy to check that the image of the coefficient $a_{t,i}$ in the graded algebra stabilizes too:
$$
a_{t,i}\sim_{\rho_k}a_{t,j},\qquad\forall\,i_0\le i<j\le k.
$$
In particular, it determines a stable value
$$
\ali=\ali(\phi)=\rho_i(a_{t,i})=\rhi(a_{t,i})\in\gi,\qquad \forall\, i\ge i_0.
$$
Since $\rho_i(\phi)=\rho_i\left(a_{t,i}\,\chi_i^{t}\right)$, we have
$$
\rho_i(\phi)=\ali+\ti\,\be_i,\qquad\forall\,i\ge i_0.
$$

\begin{proposition}\label{powerphi}
Take $\phi\in\kpi$ and let $i_0$ be an index that stabilizes $t=\ti$. Then, 
$$
\phi\sim_{\rho_i}a_{t,j}\,\chi_{j}^t,\qquad \forall\,i_0< i<j.
$$
\end{proposition}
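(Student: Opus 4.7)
The plan is to use the $\chi_j$-expansion $\phi=\sum_s a_{s,j}\chi_j^s$ at the valuation $\rho_i$ and to identify $a_{t,j}\chi_j^t$ as the unique summand attaining the minimum $\rho_i$-value.

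First I would verify that $\chi_j$ is a MLV key polynomial of minimal degree $m$ for $\rho_i$. Since $\chi_j\in\Phi_{\rho_{j-1},\rho_j}$ has degree $m$ and $\rho_i(\chi_j)\le\rho_{j-1}(\chi_j)<\rho_j(\chi_j)$, it lies in $\Phi_{\rho_i,\rho_j}=\Phi_{\rho_i,\rho_{i+1}}=[\chi_{i+1}]_{\rho_i}$ by \cite[Cor.2.6]{MLV}, so $\chi_j\sim_{\rho_i}\chi_{i+1}$ is $\rho_i$-minimal. The equal-degree case of \cite[Thm.3.9]{KP} then gives $\rho_i(\chi_j)=\be_i$. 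Since $\deg a_{s,j}<m<\mi$ each $a_{s,j}$ is stable, whence $\rho_i(a_{s,j})=\rhi(a_{s,j})$ for $i>i_0$. Lemma \ref{minimal0} therefore yields
$$\rho_i(\phi)=\min_s\bigl(\rhi(a_{s,j})+s\be_i\bigr),$$
and combining $\rhi(a_{t,j})=\ali$ with the formula $\rho_i(\phi)=\ali+t\be_i$ displayed just before the proposition shows that the minimum is attained at $s=t$. Thus $\rho_i(a_{t,j}\chi_j^t)=\rho_i(\phi)$ and both terms lie in the same homogeneous piece of $\mathcal{G}_{\rho_i}$, so the claim reduces to the strict inequality
$$\rhi(a_{s,j})+s\be_i>\ali+t\be_i \qquad\text{for every }s\ne t.$$

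For $s<t$ this is immediate: $\rhi(a_{s,j})+s\be_j\ge\rho_j(\phi)=\ali+t\be_j$ gives $\rhi(a_{s,j})\ge\ali+(t-s)\be_j>\ali+(t-s)\be_i$, since $\be_j>\be_i$ and $t-s>0$.

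The hard case is $s>t$, where the analogous $\rho_j$-bound $\rhi(a_{s,j})>\ali-(s-t)\be_j$ weakens when $\be_j$ is replaced by the smaller $\be_i$. To overcome this I would pass to the $\chi_i$-expansion $\phi=\sum_l a_{l,i}\chi_i^l$, in which the stabilization $t_i(\phi)=t$ directly yields $\rho_i(a_{l,i}\chi_i^l)>\rho_i(\phi)$ for every $l>t$. Writing $\chi_j=\chi_i+b_{ij}$ with $\deg b_{ij}<m$ and $\rho_i(b_{ij})=\be_i$ (the latter because $\chi_i\not\sim_{\rho_i}\chi_j$ belong to distinct $\sim_{\rho_i}$-classes of $\kp(\rho_i)$) and expanding $\chi_i^l=(\chi_j-b_{ij})^l$ binomially, one obtains an improper expansion $\phi=\sum_s A_s\chi_j^s$ with $A_s=\sum_{l\ge s}\binom{l}{s}a_{l,i}(-b_{ij})^{l-s}$. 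For $s>t$ every index $l\ge s$ satisfies $l>t$, and the strict inequality transfers: $\rho_i(A_s\chi_j^s)>\rho_i(\phi)$. Reducing modulo $\chi_j$ to pass from $A_s$ to the genuine $a_{s,j}$, and tracking that the mod-$\chi_j$ corrections only raise $\rho_i$-values, then produces the needed bound $\rho_i(a_{s,j}\chi_j^s)>\rho_i(\phi)$. I expect this transfer to be the main technical obstacle: controlling the binomial mixing of $(\chi_j-b_{ij})^l$ together with the reduction steps is delicate, and the characteristic-$p$ structure of Proposition \ref{bIp} (forcing $t$ to be a power of the characteristic exponent, so most $\binom{l}{s}$ vanish modulo $p$) is what ultimately makes the comparison between the two expansions compatible in the graded algebra.
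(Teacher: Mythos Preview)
Your setup and the case $s<t$ match the paper exactly. The difference is entirely in the case $s>t$, and here you are missing the key simplification.

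The paper does not change coordinates at all for $s>t$. It simply evaluates the \emph{same} $\chi_j$-expansion at $\rho_{i_0}$ rather than at $\rho_j$. Since $\chi_j$ is $\rho_{i_0}$-minimal (your own argument shows $\chi_j\in\Phi_{\rho_k,\nu}$ for every $k<j$), Lemma~\ref{minimal0} gives
\[
\rhi(a_{s,j})+s\be_{i_0}=\rho_{i_0}(a_{s,j}\chi_j^s)\ge\rho_{i_0}(\phi)=\ali+t\be_{i_0},
\]
whence $\rhi(a_{s,j})\ge\ali+(t-s)\be_{i_0}$. Now $t-s<0$ and $\be_{i_0}<\be_i$, so $(t-s)\be_{i_0}>(t-s)\be_i$, and the desired strict inequality $\rhi(a_{s,j})+s\be_i>\ali+t\be_i$ follows in one line. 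The whole point is that the inequality $\rhi(a_{s,j})\ge\ali+(t-s)\be_k$ obtained from $\rho_k$ is \emph{stronger} for small $k$ when $s>t$ and stronger for large $k$ when $s<t$; the paper just picks $k=i_0$ and $k=j$ accordingly.

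Your binomial route is not only much heavier, it is not complete as written. When you reduce the improper expansion $\sum_s A_s\chi_j^s$ to the canonical one, a term at level $s'$ propagates \emph{upward}: $A_{s'}=q\chi_j+r$ contributes $q$ to level $s'+1$, then possibly higher. Thus $a_{s,j}$ for $s>t$ receives contributions from $A_{s'}$ with $s'\le t$, for which you only have $\rho_i(A_{s'}\chi_j^{s'})\ge\rho_i(\phi)$, not strict inequality. Your claim that ``the corrections only raise $\rho_i$-values'' is true, but it does not by itself give strictness at levels $s>t$ once the lower levels mix in. Closing that gap would require a genuine cancellation argument; the appeal to Proposition~\ref{bIp} does not help here, since at this stage nothing is known about $S_{\rho_i,\chi_i}(\phi)$ beyond $\max=t$ (Corollary~\ref{Sphi} is a \emph{consequence} of the proposition you are proving).
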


\begin{proof}
For all pair of indices $k<\ell$ we have
$$
\rho_k(\chi_\ell)=\be_k<\be_\ell=\rho_\ell(\chi_\ell)\ \imp\ \chi_\ell\in \Phi_{\rho_k,\rho_\ell}.
$$
Thus, $\chi_\ell$ is a MLV key polynomial for $\rho_k$, for all $k\le\ell$. In particular, it is $\rho_k$-minimal and Lemma \ref{minimal0} shows that
\begin{equation}\label{jminimal}
\rho_k(\phi)=\min\left\{\rho_k\left(a_{s,\ell}\,\chi_\ell^s\right)\mid 0\le s\right\},\qquad \forall\,0\le k\le\ell.
\end{equation}

Now, denote $\al=\ali$ and take any pair of indices $j>i>i_0$. Let us apply (\ref{jminimal}) for $k=i$, $\ell=j$. For $s=t$ we get the minimal value  
$$
\rho_i\left(a_{t,j}\,\chi_j^t\right)=\al+t\be_i=\rho_i(\phi).
$$
The proposition will be proved if we show that $\rho_i\left(a_{s,j}\,\chi_j^s\right)$ takes an strictly larger value for all indices $s\ne t$.   

For $s<t$, we  apply (\ref{jminimal}) for $k=j=\ell$. We get
\begin{align*}
\rhi(a_{s,j})+s\be_j=&\;\rho_j\left(a_{s,j}\,\chi_j^s\right)\ge \rho_j(\phi)=\al+t\be_j\\
\imp&\;\rhi(a_{s,j})\ge \al+(t-s)\be_j>\al+(t-s)\be_i.
\end{align*}

For $s>t$, we  apply (\ref{jminimal}) for $k=i_0$, $\ell=j$. Since $t-s$ is a negative integer,
we get
$$
\rhi(a_{s,j})+s\be_{i_0}=\rho_{i_0}\left(a_{s,j}\,\chi_j^s\right)\ge \rho_{i_0}(\phi)=\al+t\be_{i_0}
$$\vskip-.6cm
\begin{equation}\label{serveix}
\qquad\qquad\qquad\imp\rhi(a_{s,j})\ge \al+(t-s)\be_{i_0}>\al+(t-s)\be_i,
\end{equation}

In both cases, we deduce that
$$\rho_i\left(a_{s,j}\,\chi_j^s\right)=  \rhi(a_{s,j})+s\be_i>\al+t\be_i=\rho_i(\phi).$$
\end{proof}

\subsection*{Residual polynomial operators of a continuous MacLane chain}
From now on, we freely use the definition and properties of the residual polynomial operators introduced in  \cite[Sec.5]{KP}.
Write $$\chi_{i+1}=\chi_i+a_i,\qquad u_i=\op{in}_{\rho_i}a_i,\qquad \forall\,i\ge1.$$
These polynomials $a_i\in\kx$ have degree $\deg(a_i)<m$; hence, they have a stable value $\rhi(a_i)=\rho(a_i)$ for all $i$.

Since $\be_i=\rho_{i+1}(\chi_i)<\rho_{i+1}(\chi_{i+1})=\be_{i+1}$, we necessarily have 
$$\rhi(a_i)=\rho_{i+1}(a_i)=\be_i,\qquad\forall\,i\ge1.$$

For $i\ge1$, all valuations $\rho_i$ have relative ramification index equal to one (cf. Definition \ref{rele}). Thus, we may consider residual polynomial operators
$$
R_i=R_{\rho_i,\chi_i,u_i}\colon \kx\lra \ka_i[y],\qquad i\ge 1,
$$
where $\ka_i$ is the maximal subfield of $\Delta_{\rho_i}$ (cf. Section \ref{subsecMLVKP}).

For this normalization of the residual polynomial operator we have
$$
R_i(a)=1,\qquad R_i(\chi_i)=1,\qquad R_i(\chi_{i+1})=y+1,\qquad \forall\,i\ge 1,
$$
for all $a\in\kx$ with $\deg(a)<m$.

Since the residual operator is multiplicative \cite[Cor.5.4]{KP}, we deduce immediately from Proposition \ref{powerphi} and \cite[Cor.5.5]{KP} that
$$R_i(\phi)=R_i(a_{t,i+1})R_i(\chi_{i+1})^t=(y+1)^t.$$ 
This result may be deduced from \cite[Prop.4.2]{hmos} too.

\begin{corollary}\label{Sphi}
Take $\phi\in\kpi$ and let $i_0$ be an index that stabilizes $\ti$. Then, 
$$
S_{\rho_i,\chi_i}(\phi)=\{0,\ti\},\qquad \forall\,i\ge i_0.
$$
\end{corollary}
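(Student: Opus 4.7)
The plan is to read $S_{\rho_i,\chi_i}(\phi)$ off directly from the formula $R_i(\phi)=(y+1)^{\ti}$ established just before the corollary, using the standard encoding of $S_{\rho_i,\chi_i}(\phi)$ by the support of the residual polynomial. Set $t=\ti$ and $\ell=\min S_{\rho_i,\chi_i}(\phi)$. Since $\erel(\rho_i)=1$ for all $i\ge 1$, the construction of the residual operator in \cite[Sec.5]{KP} tells us that $R_i(\phi)\in\ka_i[y]$ has a nonzero coefficient at $y^{s-\ell}$ precisely for $s\in S_{\rho_i,\chi_i}(\phi)$, and for no other exponent; in particular $\deg R_i(\phi)=t-\ell$.

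Comparing with $\deg(y+1)^t=t$ forces $\ell=0$, so that $0\in S_{\rho_i,\chi_i}(\phi)$ and the support of $R_i(\phi)$ in $\ka_i[y]$ coincides with $S_{\rho_i,\chi_i}(\phi)$ itself. The main step is then to compute this support. Here I would invoke the fact recalled from \cite[Sec.7]{hmos} that $t$ is a power of the characteristic exponent $p$ of $(K,v)$, together with the observation that $\ka_i$, being algebraic over $k$, has characteristic $\chr(k)$. If $\chr(k)=q>0$, then $p=q$ and $t=q^r$ for some $r\ge 0$, so Frobenius in $\ka_i[y]$ gives $(y+1)^t=y^t+1$. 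If $\chr(k)=0$, then $p=1$ forces $t=1$, and trivially $(y+1)^t=y^t+1$. In either case the support of $R_i(\phi)$ is $\{0,t\}$, which completes the proof.

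The only real piece of bookkeeping is the first step: one must be careful that the correspondence between the support of $R_i(\phi)$ and $S_{\rho_i,\chi_i}(\phi)$ is a pure shift by $\ell$, with no further scaling by $\erel$. This is exactly what the normalization $\erel(\rho_i)=1$ ensures, and once it is in hand the Frobenius computation in the second step is entirely mechanical.
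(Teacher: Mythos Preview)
Your proof is correct and follows essentially the same route as the paper: compute $R_i(\phi)=(y+1)^{\ti}$, reduce to $y^{\ti}+1$ using that $\ti$ is a power of the characteristic exponent, and read off $S_{\rho_i,\chi_i}(\phi)$ from the support. The paper's version is slightly terser---it treats $\chr(k)=0$ separately as ``obvious'' (since then $\ti=1$) and asserts the support--$S$ correspondence directly without isolating the shift by $\ell$---whereas you take the extra care of first deducing $\ell=0$ from $\deg R_i(\phi)=t$ before identifying the support with $S_{\rho_i,\chi_i}(\phi)$; this makes your bookkeeping a bit cleaner, but the argument is the same.
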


\begin{proof}
 If $\chr(k)=0$, then $\ti=1$ and the statement is obvious.
 
 If $\chr(k)=p>0$, then $\ti=p^e$ for some $e\ge0$, so that $R_i(\phi)=y^{\ti}+1$. Now, by the very definition of $R_i$, the coefficient of degree $j$ of  $R_i(\phi)$ is zero if and only if $j\not\in S_{\rho_i,\chi_i}(\phi)$. Hence, the statement follows.  
\end{proof}%\e

\subsection*{Intrinsic invariants of continuous MacLane chain}

We are ready to show that the invariants $\ti$, $\ali$ are independent of the choice of the MLV limit key polynomial $\phi$.

\begin{lemma}\label{kp=}
For any two $\phi,\,\varphi\in\kpi$ there exists an index $i_0$ such that
$$
\phi\sim_{\rho_i}\varphi,\qquad\forall\,i\ge i_0.
$$ 
\end{lemma}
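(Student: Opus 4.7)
The plan is to compare $\phi$ and $\varphi$ through the difference $h = \phi - \varphi$, exploiting the dichotomy between stable and non-stable polynomials. Since $\phi$ and $\varphi$ are both monic of degree $\mi$, their difference $h$ has degree strictly less than $\mi$, hence is stable with respect to the chain. If $\phi=\varphi$ there is nothing to prove, so I will assume $h\ne 0$ and set $\sigma=\rhi(h)\in\gi$; note that $\sigma<\infty$ because each $\rho_i$ for $i\ge 1$ is residually transcendental, hence of trivial support.

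The core of the argument is to show that $\sigma>\rho_i(\phi)$ for every $i\ge 0$; once this is established, choosing $i_0$ large enough so that $\rho_i(h)=\sigma$ for all $i\ge i_0$ gives $\rho_i(\phi-\varphi)>\rho_i(\phi)$, which by the ultrametric inequality forces $\rho_i(\phi)=\rho_i(\varphi)$ and $\inn_{\rho_i}\phi=\inn_{\rho_i}\varphi$, i.e.\ $\phi\sim_{\rho_i}\varphi$.

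To get $\sigma\ge\rho_i(\phi)$, I will argue by contradiction: if $\sigma<\rho_{i}(\phi)$ for some index $i$, then since the sequence $(\rho_j(\phi))_j$ is non-decreasing, we have $\rho_j(\phi)>\sigma$ for all $j\ge i$, and for $j$ large enough that $\rho_j(h)=\sigma$ the ultrametric inequality with unequal summands yields $\rho_j(\varphi)=\min(\rho_j(\phi),\rho_j(h))=\sigma$ for all such $j$. This would stabilize $\varphi$, contradicting $\varphi\in\kpi$. Strict inequality $\sigma>\rho_i(\phi)$ then follows because $\phi$ being non-stable forces the monotone sequence $(\rho_j(\phi))_j$ to take values strictly larger than any prescribed $\rho_i(\phi)$ at some later index $j$; if the equality $\sigma=\rho_i(\phi)$ held, applying the previous step to $j$ would give $\sigma<\rho_j(\phi)$, which the first step rules out.

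The main delicate point I anticipate is the contradiction argument for $\sigma\ge\rho_i(\phi)$: one must verify cleanly that the monotonicity of the chain, together with stability of $h$, really forces $\rho_j(\varphi)$ to stabilize at $\sigma$; this uses that $\rho_j(\phi)\ge\rho_i(\phi)>\sigma$ eventually in $j$, so the two summands in $\varphi=\phi-h$ have distinct $\rho_j$-values. Everything else is a direct consequence of the definitions of stable value and of $\mi$, and does not require the machinery of residual polynomial operators.
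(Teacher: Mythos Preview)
Your proof is correct and follows essentially the same approach as the paper: write the difference $\phi-\varphi$ as a polynomial of degree $<\mi$, hence stable, and show that if its stable value did not exceed $\rho_i(\phi)$ then $\varphi$ would stabilize. The paper compresses your two-step argument (weak inequality, then strict) into one line by invoking directly that a non-stable polynomial has \emph{strictly} increasing values along the chain, so that $\rho_i(a)\le\rho_i(\phi)$ together with $j>i\ge i_0$ already gives $\rho_j(a)=\rho_i(a)\le\rho_i(\phi)<\rho_j(\phi)$; the content is identical.
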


\begin{proof}
Write $\phi=\varphi+a$ with $a\in\kx$ of degree less than $\mi$. Since $a$ is stable, there exists an index $i_0$ such that $\rho_i(a)=\rhi(a)$ for all $i\ge i_0$.
We want to show that
$$
\rho_i(a)>\rho_i(\phi)\quad\ \forall\,i\ge i_0.
$$
In fact, $\rho_i(a)\le \rho_i(\phi)$ leads to a contradiction:
$$
\rho_j(a)=\rho_i(a)\le \rho_i(\phi)<\rho_j(\phi),\qquad \forall\,j>i,
$$
which implies that $\varphi$ would be stable: $\rho_j(\varphi)=\rho_j(a)=\rhi(a)$ for all $j>i$.
\end{proof}\e

The next result follows immediately from Proposition \ref{powerphi} and Lemma \ref{kp=}.

\begin{corollary}\label{invinv}
For all $\phi,\varphi\in\kpi$ we have $\ti(\phi)=\ti(\varphi)$ and $\ali(\phi)=\ali(\varphi)$. 
\end{corollary}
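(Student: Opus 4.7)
The plan is to combine Lemma \ref{kp=} and Proposition \ref{powerphi} in a single comparison carried out inside the graded algebra of $\rho_i$. First I will choose an index $i_0$ large enough to simultaneously stabilize $t = \ti(\phi)$, $t' = \ti(\varphi)$, and, by Lemma \ref{kp=}, the $\rho_i$-equivalence $\phi \snu[{\rho_i}] \varphi$. Fix then any pair $i_0 < i < j$. By Proposition \ref{powerphi} applied to $\phi$ and to $\varphi$, with $\chi_j$-coefficients denoted $a_{t,j}$ and $b_{t',j}$ respectively, we obtain
\[
a_{t,j}\,\chi_j^{\,t}\;\sim_{\rho_i}\;\phi\;\sim_{\rho_i}\;\varphi\;\sim_{\rho_i}\;b_{t',j}\,\chi_j^{\,t'}.
\]

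Next I will exploit the fact that $\chi_j$ is $\rho_i$-minimal (it is a MLV key polynomial for $\rho_i$, as observed in the proof of Proposition \ref{powerphi}), so that Lemma \ref{minimal0} applies to $\chi_j$-expansions. Suppose $t \ne t'$; then the $\chi_j$-expansion of the difference $a_{t,j}\,\chi_j^{\,t} - b_{t',j}\,\chi_j^{\,t'}$ has two distinct terms, both of $\rho_i$-value equal to the common value $\rho_i(\phi) = \rho_i(\varphi)$. By $\rho_i$-minimality of $\chi_j$ the $\rho_i$-value of the difference equals this common value, contradicting the $\sim_{\rho_i}$-equivalence, which would force a strict increase. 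Hence $t = t'$, which is the first claim $\ti(\phi) = \ti(\varphi)$.

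Once $t=t'$ is established, the same $\chi_j$-minimality applied to the single-term expansion $(a_{t,j}-b_{t,j})\,\chi_j^{\,t}$ yields $\rho_i(a_{t,j}-b_{t,j}) > \rho_i(a_{t,j}) = \rho_i(b_{t,j})$. Since $\deg(a_{t,j}), \deg(b_{t,j}) < m$ and all polynomials of degree $< m$ are stable for the MacLane chain, we may pass to the stable values and conclude $\rhi(a_{t,j}) = \rhi(b_{t,j})$, i.e.\ $\ali(\phi) = \ali(\varphi)$.

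The only delicate point is the $t \ne t'$ argument, but it really is immediate from $\chi_j$-minimality: I just need to make sure I am invoking Lemma \ref{minimal0} for $\chi_j$ with respect to the valuation $\rho_i$ (with $i<j$), which is legitimate because each $\chi_j$ lies in $\Phi_{\rho_i,\rho_j}$ and is therefore a MLV key polynomial for $\rho_i$. Everything else is bookkeeping about choosing $i$ sufficiently large.
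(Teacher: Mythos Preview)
Your argument is correct and is precisely the approach the paper intends: the text states that the corollary ``follows immediately from Proposition \ref{powerphi} and Lemma \ref{kp=}'', and you have simply written out the details of that immediate deduction, using the $\rho_i$-minimality of $\chi_j$ (Lemma \ref{minimal0}) to read off $t=t'$ and then $\rhi(a_{t,j})=\rhi(b_{t,j})$ from the chain of $\sim_{\rho_i}$-equivalences. One cosmetic point: your notation $\snu[\rho_i]$ is not defined in the paper; write $\sim_{\rho_i}$ instead.
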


%Thus, these invariants depend only on the continuous MacLane chain.\e

%\subsection*{Some more invariants}
Let us recall another intrinsic invariant $\bi$ of the chain.

Take any valuation $\nu$ on $\kx$ such that $\nu>\rho_i$ for all $i$. For instance, any limit augmentation of $\left(\rho_i\right)_{i\ge0}$. 

In \cite[Sec.7]{hmos} it is shown that for a sufficiently large index $j_0$ one has:
$$
I(\chi_j)=\{\bi\},\qquad \forall\, j\ge j_0,
$$
for a certain positive integer $\bi$.

Since all polynomials $\chi_j$ and all their derivatives $\pb{\chi_j}$ are stable, it is clear that $\bi$ does not depend on the choice of the valuation $\nu$.

By Proposition \ref{bIp}, $\bi$ is a power of the characteristic exponent $p$ of $(K,v)$.\e

On the other hand, all $\chi_j$ are key polynomials for $\nu$ such that $\nu_{\chi_j}=\rho_j$ by Theorem \ref{main}.
Let us denote $\ep_j=\ep(\chi_j)\in\left(\gi\right)_\Q$. 
In \cite[Cor.7.3]{hmos} it is proved that 
$$
 \rhi(\prt{\bi}{a_j})> \rhi(\prt{\bi}{\chi_j})=\rhi(\prt{\bi}{\chi_{j+1}}),\qquad \forall\,j\ge j_0.
$$

In particular, we may consider another invariant of the essential continuous MacLane chain $\left(\rho_i\right)_{i\ge0}$, independent of $j$ and the choice of $\nu$:$$\di:=\rhi(\prt{\bi}{\chi_j})\in\gi.$$ 

As a consequence we get a direct formula for the variation of $\ep_j$:
\begin{equation}\label{di}
\be_j-\bi\ep_j=\di=\be_{j+1}-\bi\ep_{j+1}\ \imp\ \ep_{j+1}-\ep_j=\dfrac1{\bi}\,\left(\be_{j+1}-\be_j\right).
\end{equation}

Finally, let us quote a basic relationship between these invariants.

\begin{lemma}\label{bt>=ml}
For any $\phi\in\kpi$, we have $\ti\bi\ge \ml(\phi)$. 
\end{lemma}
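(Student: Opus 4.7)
The plan is to prove the stronger statement that $\partial_{\ti\bi}(\phi)\ne 0$, which immediately gives $\ml(\phi)\le\ti\bi$. The case $\chr(K)=0$ is trivial since then $\ml(\phi)=1$ while $\ti,\bi\ge 1$; so assume henceforth $\chr(K)=p>0$, in which case both $t:=\ti$ and $\bi$ are powers of $p$ by the remark following Corollary \ref{Sphi} and by Proposition \ref{bIp}. I fix an index $i$ large enough that $t_i(\phi)=t$, $I(\chi_i)=\{\bi\}$, $\rhi(a_{t,i})=\ali$ and $\rhi(\partial_{\bi}\chi_i)=\di$, and set $\hat\ep:=\ep(\chi_i)=(\be_i-\di)/\bi$. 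By Corollary \ref{Sphi} the $\chi_i$-expansion of $\phi$ splits as
\[
\phi=a_{0,i}+a_{t,i}\chi_i^t+R,\qquad R=\sum_{s\notin\{0,t\}}a_{s,i}\chi_i^s,
\]
with $\rho_i(R)>\rho_i(\phi)=\ali+t\be_i$.

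The main computation concerns the middle summand. Because $t=p^\sigma$, the Frobenius identity in characteristic $p$ gives $\chi_i(x+y)^t=\chi_i^t+\sum_{b\ge 1}(\partial_b\chi_i)^t\,y^{bt}$, and Leibniz then yields
\[
\partial_{t\bi}(a_{t,i}\chi_i^t)=\sum_b \partial_{t(\bi-b)}(a_{t,i})\,(\partial_b\chi_i)^t,
\]
with $b$ ranging over $\{b\le\bi:\partial_b\chi_i\ne 0\}$. The term $b=\bi$ equals $a_{t,i}(\partial_{\bi}\chi_i)^t$, which has $\rho_i$-value exactly $\ali+t\di$. For every other $b<\bi$, two strict inequalities combine: $\rho_i(\partial_b\chi_i)>\di+(\bi-b)\hat\ep$ follows from $b\notin I(\chi_i)$, and $\rho_i(\partial_{t(\bi-b)}a_{t,i})>\ali-t(\bi-b)\hat\ep$ follows from the key polynomial property of $\chi_i$ for $\rho_i$ forcing $\ep(a_{t,i})<\hat\ep$ (when $a_{t,i}$ is nonconstant; when it is constant only the $b=\bi$ term survives). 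Adding, every non-main term has $\rho_i$-value strictly above $\ali+t\di$, so $\rho_i(\partial_{t\bi}(a_{t,i}\chi_i^t))=\ali+t\di$.

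For the outer two summands I invoke Proposition \ref{nuQnu}(i) with $Q=\chi_i$ and $\nu_Q=\rho_i$: directly $\rho_i(\partial_{t\bi}R)\ge\rho_i(R)-t\bi\hat\ep>\rho_i(\phi)-t\bi\hat\ep=\ali+t\di$ thanks to the strict inequality $\rho_i(R)>\rho_i(\phi)$, and analogously $\rho_i(\partial_{t\bi}a_{0,i})>\ali+t\di$ using $\ep(a_{0,i})<\hat\ep$ when $a_{0,i}$ is nonconstant and the vanishing of the derivative otherwise. Hence $\partial_{t\bi}\phi$ has a unique summand attaining the minimum $\rho_i$-value $\ali+t\di<\infty$, which forces $\partial_{t\bi}\phi\ne 0$ and therefore $\ml(\phi)\le t\bi=\ti\bi$.

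The main obstacle is the Frobenius step: one needs $t$ to be a power of $p$ precisely so that $\chi_i(x+y)^t-\chi_i^t$ has $y$-support in multiples of $t$, collapsing $\partial_{t\bi}(\chi_i^t)$ to the single nonzero $(\partial_{\bi}\chi_i)^t$. Without this cancellation-free structure the Leibniz cross terms proliferate and could in principle neutralise the main contribution.
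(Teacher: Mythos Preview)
Your proof is correct and reaches the same intermediate conclusion as the paper, namely
\[
\rho_i\!\left(\partial_{\ti\bi}\phi\right)=\rho_i(\phi)-\ti\bi\,\ep(\chi_i)=\ali+\ti\,\di,
\]
which forces $\partial_{\ti\bi}\phi\ne 0$. The difference is one of packaging: the paper simply invokes \cite[Prop.~6.1]{hmos} (equivalently \cite[Prop.~14]{Dec}) to obtain this identity in one line, whereas you unpack it by hand, exploiting the Frobenius identity $\partial_{bt}(\chi_i^t)=(\partial_b\chi_i)^t$ in characteristic $p$ together with the decomposition $\phi=a_{0,i}+a_{t,i}\chi_i^t+R$ coming from Corollary~\ref{Sphi}. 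Your route is thus a self-contained re-derivation of the special case of those cited propositions that is needed here; it buys independence from the external references at the cost of some extra bookkeeping.

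One small expository point: in your Leibniz sum the index $b=0$ is present (since $\partial_0\chi_i=\chi_i\ne 0$), and for $b=0$ the first of your ``two strict inequalities'' is actually an equality, $\rho_i(\chi_i)=\be_i=\di+\bi\hat\ep$. The argument is unaffected because the second inequality (from $\ep(a_{t,i})<\hat\ep$, or vanishing when $a_{t,i}\in K$) already supplies the needed strictness for that term; you may want to separate the case $b=0$ explicitly. Likewise, the phrase ``key polynomial property of $\chi_i$ for $\rho_i$'' should read ``for $\nu$'', since $\ep$ is computed with respect to $\nu$; this is harmless because $\nu$ and $\rho_i$ agree on polynomials of degree $<m$.
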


\begin{proof} 
Let $j$ be a sufficiently large index so that it stabilizes both $\ti$ and $\bi$. Recall that $\chi_j$ is a key polynomial for $\nu$ such that $\nu_{\chi_j}=\rho_j$. 
By Corollary \ref{Sphi}, $S_{\rho_j,\chi_j}(\phi)=\{0,\ti\}$. Let $b=\ti\bi$. By \cite[Prop.6.1]{hmos} or \cite[Prop. 14]{Dec}, 
$$
\rho_j(\pb{\phi})=\rho_j(\phi)-b\ep_j.
$$
In particular, $\pb{\phi}\ne0$, so that $b\ge\ml(\phi)$.
\end{proof}

\subsection{Vertically bounded continuous MacLane chains}\label{subsecVB}
Let us recall Hahn's embedding theorem for ordered groups. A basic reference for this result is \cite{Rib}. 

Let $\La$ be an abelian (totally) ordered group. A subgroup $H\subset \La$ is \emph{convex} if it satisfies
$$
0<\be<\ga, \quad \ga\in H\ \imp\ \be\in H,
$$
for all $\be,\ga\in \La_{>0}$.

For any $\ga\in\La$ we denote by $H_\ga$ the convex subgroup generated by $\ga$. That is, $H_\ga$ is the intersection of all convex subgroups of $\La$ that contain $\ga$. The convex subgroups of the form $H_\ga$  are said to be \emph{principal}.

The principal convex subgoups of $\La$ are totally ordered by inclusion. 
Let us denote by
$$
I=\pcv(\La)
$$
the set of non-zero convex principal subgroups of $\La$, ordered by decreasing inclusion.

Formally, we consider $I$ as an abstract totally ordered set parameterizing the principal convex subgoups. For any $i\in I$ we denote by $H_i$ the corresponding principal convex subgroup. Note that
$$
i\le j\ \sii\ H_i\supset H_j.
$$

Denote by $\rlex$ the Hahn product; that is, $\rlex\subset\R^I$ is the subgroup of the cartesian product $\R^I$ formed by the elements $\ga=\left(x_i\right)_{i\in I}$  whose support
$$
\supp(\ga)=\{i\in I\mid x_i\ne0\}\subset I
$$
is a well-ordered subset, with respect to the ordering induced by $I$. It makes sense to consider the lexicographical ordering in $\rlex$.

By Hahn's theorem, there is an embedding of ordered groups 
$$
\La\hooklongrightarrow\La_\Q\hooklongrightarrow\rlex,
$$
such that the embedding $\La_\Q\hooklongrightarrow\rlex$ is immediate; that is, it determines an identification of  the skeleton of both ordered grups. 
In particular, the natural mapping
$$
\pcv(\La)\lra \pcv\left(\rlex\right), \quad H_\ga\ \mapsto \left(H_{\ga}\right)_\R=\mbox{ convex subgroup of $\rlex$ generated by $\ga$} 
$$
is an isomorphism of ordered sets.

\begin{definition}\label{VBdef}
Consider a strictly increasing sequence of positive elements in $\La$,
$$%\begin{equation}\label{S}
S=\{\ga_1<\ga_2<\cdots<\ga_n<\cdots\}\subset \La_{>0}.
$$%\end{equation}

Let $H_S$ be the convex subgroup of $\La$ generated by $S$. 

We say that $S$ is \emph{vertically bounded} (VB) if $S$ admits an upper bound in $H_S$.

We say that $S$ is \emph{horizontally bounded} (HB) if $S$ has no upper bounds in $H_S$, but it admits an upper bound in $\La$.

We say that $S$ is \emph{unbounded} (UB) if $S$ admits no upper bounds in $\La$.
\end{definition}

Clearly, any such sequence $S$ falls in one, and only one, of the three cases VB, HB or UB.

Horizontally bounded sequences occur only in ordered groups of rank greater than one.\e

The next table displays some examples in the ordered group $\La=\Q^2_{\op{lex}}$. 

In this case, all convex subroups are principal and $I=\{1,2\}$. The non-zero convex subgroups are
\ $H_1=\La$, \ $H_2=\{0\}\times\Q$.\bs

\begin{center}
\as{1.3}
\begin{tabular}{|c|c|c|}
 \hline
$S$&$H_S$&boundness\\\hline 
$\left(0,1-(1/n)\right)$&$\{0\}\times\Q$&VB\\\hline
$\left(1,n\right)$&$\La$&VB\\\hline
$\left(0,n\right)$&$\{0\}\times\Q$&HB\\\hline
$\left(n,0\right)$&$\La$&UB\\
 \hline
\end{tabular}
\end{center}

\e

\begin{lemma}\label{afitacio}
Let $S=\left(\ga_n\right)_{n\in\N}$ be a strictly increasing sequence of positive elements in $\La$.
Let $H_S$ be the convex subgroup of $\La$ generated by $S$. Then, the following conditions are equivalent.
\begin{enumerate}
\item $S$ is vertically bounded. 
\item For all $q\in\Q$, $q>1$, there exists $n\in\N$ such that $q\ga_n>S$ in $\La_\Q$.
\end{enumerate}
In this case, $H_S$ is a principal convex subgroup.
\end{lemma}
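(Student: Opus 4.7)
The implication (2)$\Rightarrow$(1) together with the principality claim is the easy half. Applying (2) with $q=2$ produces an index $n$ with $2\ga_n=\ga_n+\ga_n>\ga_m$ for every $m$; since $2\ga_n\in H_S$, this exhibits an upper bound of $S$ inside $H_S$. Conversely, if (1) holds with upper bound $\delta\in H_S$, then $0<\ga_n\le\delta$ forces every $\ga_n$ into the principal convex subgroup $H_\delta$, so $S\subset H_\delta$ and $H_S\subseteq H_\delta\subseteq H_S$, which proves that $H_S=H_\delta$ is principal.

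For (1)$\Rightarrow$(2), I would pass to the immediate Hahn embedding $\La_\Q\hooklongrightarrow\rlex$ recalled just above the statement and exploit that $H_S=H_\delta$ is principal. Let $i_0\in I$ be the index attached to $H_\delta$: by construction $(\delta)_i=0$ for $i<i_0$ and $(\delta)_{i_0}>0$, and the same vanishing property below $i_0$ holds for every $\ga_n$, since $\ga_n\in H_\delta$.

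The main technical step is to show that the leading real coordinates $s_n:=(\ga_n)_{i_0}$ eventually become positive, are weakly increasing, and remain bounded above by $(\delta)_{i_0}$. The standard description of $H_S$ as the union of the intervals $[-k\ga_m,k\ga_m]$ ($k,m\in\N$) yields an inequality $\delta\le k\ga_{m_0}$ for some $k,m_0\in\N$, and lexicographic comparison at coordinate $i_0$ then forces $ks_{m_0}\ge(\delta)_{i_0}$, hence $s_{m_0}\ge(\delta)_{i_0}/k>0$. For $n\ge m_0$, all of $\ga_n,\ga_{n+1},\delta$ vanish at indices $<i_0$, so the chain of lexicographic inequalities $\ga_n\le\ga_{n+1}\le\delta$ projects onto the real inequalities $0<s_{m_0}\le s_n\le s_{n+1}\le(\delta)_{i_0}$. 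Hence $(s_n)_{n\ge m_0}$ is a bounded monotone positive real sequence, and it converges to some $s\in(0,(\delta)_{i_0}]$.

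To conclude, fix $q\in\Q$ with $q>1$: since $s>0$ one has $qs>s$ in $\R$, and the convergence $s_n\to s$ furnishes an index $n$ with $qs_n>s\ge s_m$ for every $m$. Both $q\ga_n$ and $\ga_m$ vanish at indices $<i_0$ in $\rlex$, so the strict inequality at the leading coordinate $q(\ga_n)_{i_0}>(\ga_m)_{i_0}$ yields $q\ga_n>\ga_m$ lexicographically, hence in $\La_\Q$, as required. The main obstacle is precisely the stabilization step for $s_n$: extracting the strict positivity $s_{m_0}>0$ from the integer-multiple description of $H_S$ is what makes the scaling step $qs>s$ available, and without it the limit $s$ could be zero and the argument would collapse.
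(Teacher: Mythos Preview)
Your proof is correct and follows essentially the same approach as the paper: both directions match, and for (1)$\Rightarrow$(2) you pass to the Hahn embedding, identify the leading coordinate $i_0$ of the principal subgroup $H_S=H_\delta$, and use that the real numbers $s_n=(\ga_n)_{i_0}$ are nonnegative, bounded, and eventually positive, so that their supremum $s>0$ can be beaten by $qs_n$ for suitable $n$. The only cosmetic difference is that the paper obtains $s_n>0$ for some $n$ by the one-line observation ``otherwise $S$ would not generate $H_i$'', whereas you extract it from the interval description $H_S=\bigcup_{k,m}[-k\ga_m,k\ga_m]$; and the paper writes $\sup$ where you write the limit of a monotone sequence.
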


\begin{proof}
It is obvious that (2) implies (1). From $q\ga_n>S$ we deduce that $N\ga_n>S$ for any integer $N\ge q$. 

Let us show that (1) implies (2). If $\ga\in H_S$ satisfies $\ga>S$, then $S\subset H_\ga$ by the convexity of $H_\ga$. Hence, $H_S=H_\ga$ is a principal convex subgroup.

Let $i\in I$ such that $H_i=H_S$. Then, 
$$
(H_S)_\R=\left\{(x_j)\in\rlex\mid x_j=0,\ \forall\, j<i\right\}\subset\rlex.
$$
Thus, we may write
$$
\ga=(0\cdots0\;x\,\star\star\cdots),\qquad \ga_n=(0\cdots0\;x_n\,\star\star\cdots),\quad\forall\,n\in\N,
$$
where $x,x_n\in\R$ are the $i$-th coordinates. They satisfy $0\le x_n\le x$ for all $n\in\N$, and $x_n>0$ for some $n$ (otherwise $S$ would not generate $H_i$). 

Consider $b=\sup(x_n\mid n\in\N)$. 
For any given $q\in\Q$, $q>1$, there exists $n\in\N$ such that $b<qx_n$. Hence, $q\ga_n>S$.
\end{proof}

%If $a_n<b$ for all $n\in\N$, we may take $\be=(0\cdots0\,b\,0\cdots0)$. Clearly, $q\ga_n>\beta>S$.

%If $b=a_n$ for some $n$, we take $\be=(0\cdots0\,xb\,0\cdots0)$ for any $x\in\R$ such that $1<x<q$. Since $qa_n=qb>pb>b$, we have $q\ga_n>\beta>S$ too.

\begin{definition}\label{rhovbdef}
Let $\left(\rho_i\right)_{i\ge0}$ be an essential continuous MacLane chain as in (\ref{chainrho}). Let $i_0$ be the first index that stabilizes $\ti$, and consider the strictly increasing sequence $$S=\left(\be_i-\be_{i_0}\right)_{i>i_0}.$$ 
%be the stable value of $\max\left(S_{\rho_i,\chi_i}(\phi)\right)$ for any $\phi\in\kpi$.

We say that the chain $\left(\rho_i\right)_{i\ge0}$ is vertically bounded, horizontally bounded or unbounded according to the boundness status of $S$ introduced in Definition \ref{VBdef}.
\end{definition}

\begin{theorem}\label{vbthm}
Let $\left(\rho_i\right)_{i\ge0}$ be an essential continuous MacLane chain as in (\ref{chainrho}). If $(\rho_i)_{i\ge0}$ is vertically bounded, then
$\mi=m\ti$ and $\ali=0$.
\end{theorem}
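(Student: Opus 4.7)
The plan is to reduce the theorem to showing just the equality $\mi = m\ti$: once this holds, $n := \lfloor \mi/m \rfloor$ equals $\ti$, and since $\phi$ is monic of degree $m\ti$ and $\chi_j$ is monic of degree $m$, the top coefficient $a_{n,j} = a_{\ti, j}$ of the $\chi_j$-expansion of $\phi$ must equal $1$; hence $\ali = \rhi(1) = 0$, so the second conclusion follows automatically from the first.

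The inequality $m\ti \le \mi$ is automatic from $\ti \le n$, so by the minimality of $\mi$ it suffices, under the assumption $\mi > m\ti$, to produce a monic non-stable polynomial of degree $< \mi$, yielding a contradiction. The VB hypothesis, via Lemma~\ref{afitacio}, yields some $\beta^* \in \gi$ bounding all $\be_i$ from above inside the convex subgroup $H_S$, and equation~(\ref{di}) propagates the same control to $(\ep_j)$.

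Fix $j$ large enough to stabilize the numerical character and $\ali$ (Corollary~\ref{Sphi} and Proposition~\ref{powerphi}), and split the $\chi_j$-expansion as $\phi = \xi + \eta$ with $\xi := \sum_{s \le \ti} a_{s,j}\chi_j^s$ and $\eta := \sum_{s > \ti} a_{s,j}\chi_j^s$. Then $\xi$ has degree $< m(\ti+1) \le \mi$, so the minimality of $\mi$ forces $\xi$ to be stable; Corollary~\ref{Sphi} and Proposition~\ref{nuQnu} combined with the Newton polygon of $\phi$ at $\rho_j$ identify $\rho_j(\xi) = \ali + \ti\be_j$ together with its initial form, the residual $R_j(\phi) = (y+1)^{\ti}$ having nonzero constant and leading coefficients.

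The crux --- and the main obstacle --- is to use the VB bound $\be_i \le \beta^*$ to exhibit a stable monic polynomial $f$ of degree $\mi$ with $\rho_i(f) > \ali + \ti\be_i$ for all sufficiently large $i$. Once such $f$ is in hand, subtracting it from $\phi$ cancels the leading $x^{\mi}$ while leaving $\rho_i(\phi - f) = \rho_i(\phi) = \ali + \ti\be_i$ a strictly increasing sequence, producing a non-stable polynomial of degree $< \mi$ that contradicts the minimality of $\mi$. Producing such $f$ requires propagating the VB ceiling $\beta^*$ --- which controls monic polynomials of degree $m$ by~(\ref{betaicofinal}) --- up to degree $\mi$ through a careful Newton-polygon analysis; the construction is specific to the VB regime and fails in the unbounded case treated by Theorem~\ref{ubthm}, where $(\be_i)$ is no longer confined to $H_S$.
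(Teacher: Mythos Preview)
Your proposal has a genuine gap: the fourth paragraph, which you yourself label ``the crux --- and the main obstacle'', is not carried out. You describe the polynomial $f$ you would like to have (stable, monic of degree $\mi$, with $\rho_i(f)>\ali+\ti\be_i$ for all large $i$) and assert that ``producing such $f$ requires propagating the VB ceiling $\beta^*$ \dots\ through a careful Newton-polygon analysis'', but you never perform this construction. Since this is the only place the VB hypothesis would enter, what remains is a sketch, not a proof. There is also a secondary error in paragraph~3: from $\mi>m\ti$ you cannot conclude $m(\ti+1)\le\mi$; when $m\ti<\mi<m(\ti+1)$ one has $n=\ti$, so $\eta=0$, $\xi=\phi$, and your claim ``$\xi$ is stable'' is false.

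The paper's argument is quite different and avoids the need for any auxiliary $f$. The key idea is to exploit Lemma~\ref{afitacio} not merely to get an upper bound $\beta^*$, but with the specific rational $q=(t+1)/t$: this produces an index $j>i_0$ for which $(t+1)(\be_j-\be_{i_0})>t(\be_i-\be_{i_0})$ for \emph{all} $i>i_0$. Combined with the estimate $\rhi(a_{s,j})\ge\ali+(t-s)\be_{i_0}$ from~(\ref{serveix}), this forces $\rho_i(a_{s,j}\chi_j^s)>\rho_i(\phi)$ for every $s>t$ and every $i>i_0$ (Proposition~\ref{powerphi} already gives the range $i<j$; the choice of $j$ via Lemma~\ref{afitacio} is what extends it to $i\ge j$). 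Thus the truncation $F=\sum_{s\le t}a_{s,j}\chi_j^s$ satisfies $\phi\sim_{\rho_i}F$ for all $i>i_0$, so $F$ itself is non-stable, and minimality of $\mi$ forces $F=\phi$. This already gives $n=t$; a short unit argument in $\gg_{\rho_1}$ then produces a non-stable monic polynomial of degree $mt$, forcing $\deg(a_{t,j})=0$ and hence $\mi=mt$, $\ali=0$. Your observation that $\ali=0$ follows automatically from $\mi=m\ti$ is correct, but the substance of the proof lies in the uniform-in-$i$ control of the tail, which your proposal does not supply.
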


\begin{proof}
Denote $t=\ti$, $\al=\ali$, and let $i_0$ be the first index that stabilizes $\ti$.

The sequence $S=\left(\be_i-\be_{i_0}\right)_{i>i_0}$ admits an upper bound in the principal convex subgroup generated by $S$. By Lemma \ref{afitacio}, there exists an index $j>i_0$ such that
\begin{equation}\label{fitat}
\dfrac{t+1}t\,\left(\be_j-\be_{i_0}\right)>\left(\be_i-\be_{i_0}\right),\qquad \forall\,i>i_0. 
\end{equation}

Take any $\phi\in\kpi$, and let $n=\lfloor \mi/m\rfloor$. Consider the canonical $\chi_j$-expansion of $\phi$,
$$
\phi=a_{n,j}\,\chi_i^n+a_{n-1,j}\,\chi_i^{n-1}+\cdots+a_{1,j}\,\chi_i+a_{0,j}.
$$
 
\noindent{\bf Claim. } $\phi\sim_{\rho_i} a_{t,j}\,\chi_j^t+\cdots+a_{1,j}\,\chi_j+a_{0,j},\quad \forall\,i>i_0$.\e

To prove the Claim we must show that
$$\rho_i\left(a_{s,j}\,\chi_j^s\right)>\rho_i(\phi)=\al+t\be_i,\qquad \forall\,s>t,\quad\forall\,i>i_0.$$   
This holds whenever $i<j$ by Proposition \ref{powerphi}. Thus, we may assume that $i\ge j$. In this case, $\rho_i(\chi_j)=\be_j$.

Take any $s>t$. We saw in (\ref{serveix}) that $\rhi(a_{s,j})\ge\al+(t-s)\be_{i_0}$.
Hence, 
$$
\rho_i\left(a_{s,j}\,\chi_j^s\right)=\rhi(a_{s,j})+s\be_j\ge\al+(t-s)\be_{i_0}+s\be_j=\al+t\be_{i_0}+s\left(\be_j-\be_{i_0}\right).
$$
We want to show that $\al+t\be_{i_0}+s\left(\be_j-\be_{i_0}\right)>\al+t\be_i$, which amounts to
$$
s(\be_j-\be_{i_0})>t(\be_i-\be_{i_0}),
$$
and this follows from (\ref{fitat}). This ends the proof of the Claim.\e

By the Claim, the polynomial  $F=a_{t,j}\,\chi_j^t+\cdots+a_{1,j}\,\chi_j+a_{0,j}$ is non-stable. By the minimality of $\mi=\deg(\phi)$, we must have $F=\phi$.

Since the coefficients $a_{s,j}$ have degree less than $m=\deg(\rho_1)$, those which are non-zero determine units in the graded algebra $\gg_{\rho_1}$. Conversely, any unit in $\gg_{\rho_1}$ is the initial term of a polynomial of degree less than $m$ \cite[Prop.3.5]{KP}. Therefore, there exist polynomials $b,c_0,\dots,c_{t-1}\in\kx$, all of degree less than $m$, such that:
$$
ba_{t,j}\sim_{\rho_1}1,\qquad ba_{s,j}\sim_{\rho_1}c_s,\qquad \forall\,0\le s<t.
$$
Since $\rho_1(c_s)=\rhi(c_s)$ for all $s$, we have 
$$
ba_{t,j}\sim_{\rho_i}1,\qquad ba_{s,j}\sim_{\rho_i}c_s,\qquad \forall\,0\le s<t,
$$
for all $i\ge1$. By the Claim, we deduce that
$$
b\phi\sim_{\rho_i}\chi_j^t+c_{t-1}\chi_j^{t-1}+\cdots +c_0,\qquad \forall\, i>i_0.
$$
Since $b\phi$ is clearly non-stable, this implies that the polynomial of degree $mt$, $$\chi_j^t+c_{t-1}\chi_j^{t-1}+\cdots +c_0,$$ is non-stable too. By the minimality of $\mi=\deg(\phi)=\deg(a_{t,j})+mt$, we deduce that $\deg(a_{t,j})=0$, which implies $a_{t,j}=1$ because $\phi$ is monic.

This proves that $\mi=mt$ and $\al=\rhi(a_{t,j})=0$.
\end{proof}

\begin{corollary}
For all VB essential continuous MacLane chains, we have $\ti>1$. 

Therefore, there are no VB essential continuous MacLane chains at all, if $\chr(k)=0$.
\end{corollary}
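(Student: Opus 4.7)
The plan is to extract both claims directly from the preceding theorem together with the structural facts about essential chains and the arithmetic nature of $\ti$ recalled in Section \ref{subsecBasicInvs}.

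First I would invoke Theorem \ref{vbthm}: for a VB essential continuous MacLane chain, $\mi=m\ti$. Then I would recall that the chain being essential means, by definition, $\mi>m$ (this is the distinction between the essential and inessential cases). Combining these two facts gives $m\ti>m$, and since $m\ge1$, we conclude $\ti>1$. This is the entire content of the first statement; no further work is needed.

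For the second statement, I would use the fact (quoted in Section \ref{subsecBasicInvs} from \cite[Sec.7]{hmos}) that $\ti$ is always a power of the characteristic exponent $p$ of $(K,v)$. When $\chr(k)=0$, by definition $p=1$, so the only power of $p$ is $1$, forcing $\ti=1$. This contradicts $\ti>1$, so no such chain can exist.

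There is really no obstacle here; the corollary is a two-line consequence of Theorem \ref{vbthm} together with the essentiality hypothesis and the fact that $\ti$ is a power of $p$. The only thing to be careful about is to cite the correct places where $\mi>m$ (the definition of ``essential'' in Section \ref{subsecLMLV}) and where $\ti$ is shown to be a power of $p$ are established, so the reader can trace both inputs.
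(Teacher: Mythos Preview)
Your proof is correct and matches the paper's intended reasoning: the corollary is stated without proof there, as an immediate consequence of Theorem~\ref{vbthm} together with the essentiality condition $\mi>m$ and the fact that $\ti$ is a power of the characteristic exponent $p$. Your write-up is exactly the argument the authors have in mind.
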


Theorem \ref{vbthm} was proved for $\rho$ of finite rank in \cite[Sec.3]{Vaq2004}, and for $\rho$ of rank one in \cite[Sec.5]{hmos}. Actually, both proofs are valid for arbitrary rank, once the right definition of vertically bounded chain is introduced. We followed the approach of Vaqui\'e in \cite{Vaq2004}.   

\subsection{Invariants of unbounded continuous MacLane chains}\label{subsecUnbounded}

\begin{theorem}\label{ubthm}
Let $\left(\rho_i\right)_{i\ge0}$ be an essential continuous MacLane chain as in (\ref{chainrho}). If the sequence $(\be_i)_{i\ge0}$ is unbounded in $\gi$, then
$$\ti\bi=\ml(\phi),\qquad \forall\,\phi\in\kpi.$$
\end{theorem}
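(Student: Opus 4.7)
The plan is to combine Lemma \ref{bt>=ml}, which already gives $\ti\bi \ge \ml(\phi)$, with the reverse inequality $\ml(\phi) \ge \ti\bi$. I would establish the reverse by showing that $\pb{\phi}=0$ for every positive integer $b<\ti\bi$.

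Fix such $b$ and argue by contradiction, assuming $\pb{\phi}\ne 0$. Since $\deg(\pb{\phi})<\deg(\phi)=\mi$, the polynomial $\pb{\phi}$ is stable with respect to $(\rho_i)_{i\ge0}$, so its stable value $\rhi(\pb{\phi})$ is a well-defined element of $\gi$ (using that every $\rho_j$ has trivial support, so that nonzero stable polynomials take values in $\gi$). For all sufficiently large $j$ I would then combine three ingredients: the formula $\rho_j(\phi)=\ali+\ti\be_j$ from Section \ref{subsecBasicInvs}; Proposition \ref{nuQnu}(i) applied to the key polynomial $Q=\chi_j$, which (using $\nu_{\chi_j}=\rho_j$ and $\ep(\chi_j)=\ep_j$) yields $\rho_j(\pb{\phi}) \ge \rho_j(\phi)-b\ep_j$; and the identity $\bi\ep_j=\be_j-\di$ from (\ref{di}). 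Substituting these and multiplying through by $\bi$ collapses the estimate to
$$\bi\,\rho_j(\pb{\phi}) \;\ge\; \bi\ali + b\di + (\ti\bi-b)\,\be_j.$$

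For $j$ large enough the left-hand side equals $\bi\rhi(\pb{\phi})$, a fixed element $M_0 \in \gi$, so the inequality rearranges to $(\ti\bi-b)\be_j \le M_0-\bi\ali-b\di =: M\in\gi$. Since $\ti\bi-b$ is a positive integer and $\be_j>0$ for large $j$ (the sequence is strictly increasing and unbounded in $\gi$), one has $\be_j \le (\ti\bi-b)\be_j \le M$ for all sufficiently large $j$, contradicting the hypothesis that $(\be_i)$ is unbounded in $\gi$. Hence $\pb{\phi}=0$ for every $b<\ti\bi$, so $\ml(\phi) \ge \ti\bi$, and together with Lemma \ref{bt>=ml} this gives the equality.

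The main obstacle I anticipate is ambient-group bookkeeping: although $\ep_j$ lives only in the divisible hull $(\gi)_\Q$, the combination $\bi\ep_j$ lies in $\gi$ thanks to (\ref{di}), and it is precisely this fact that allows clearing denominators and extracting an honest element of $\gi$ as an upper bound for $\be_j$. Once this is handled, the rest is a controlled unwinding of the invariants catalogued in Section \ref{subsecBasicInvs}.
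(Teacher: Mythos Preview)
Your proposal is correct and follows essentially the same approach as the paper: both combine Lemma~\ref{bt>=ml} with the inequality obtained from Proposition~\ref{nuQnu}(i) applied to $Q=\chi_j$, the formula $\rho_j(\phi)=\ali+\ti\be_j$, and the relation $\bi\ep_j=\be_j-\di$ from~(\ref{di}), to force a bound on $(\be_j)$ unless $\ti\bi\le\ml(\phi)$. The only cosmetic differences are that the paper works directly with $b=\ml(\phi)$ and argues in $(\gi)_\Q$ (noting that a bound there yields one in $\gi$), whereas you run the contrapositive over all $b<\ti\bi$ and clear denominators by multiplying through by $\bi$; these are equivalent packagings of the same computation.
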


\begin{proof}
Let $\nu$ be any valuation on $\kx$ such that $\nu>\rho_i$ for all $i$. Recall that all $\chi_i$ are key polynomials for $\nu$ such that $\nu_{\chi_i}=\rho_i$.  
Denote $\ep_i=\ep(\chi_i)$ for all $i$. 

Let $\phi\in\kpi$. Denote 
$$
b=\ml(\phi),\quad t=\ti, \quad\al=\ali.
$$

Since $\pb{\phi}$ has degree less than $\mi$, it is a stable polymomial.
Let $i_0$ be any index that stabilizes $\ti$, $\bi$ and $\rhi(\pb{\phi})$. By definition,
$$
\ep_i=\dfrac{\nu(\chi_i)-\nu(\prt{\bi}{\chi_i})}{\bi}=\dfrac{\be_i-\di}{\bi},\qquad \forall\, i\ge i_0,
$$
where $\di$ is the invariant introduced in (\ref{di}).

By (i) of Proposition \ref{nuQnu}, for all $i\ge i_0$ we have
$$
\rho_i(\pb{\phi})\ge \rho_i(\phi)-b\ep_i=\al+t\be_i-b\ep_i=\al+t\be_i-\dfrac b{\bi}\left(\be_i-\di\right)=\al+\left(t-\dfrac b{\bi}\right)\be_i+\dfrac{b\di}{\bi}.
$$
From this inequality we deduce
$$
\left(t-\dfrac b{\bi}\right)\be_i\le \rhi(\pb{\phi})-\al-\dfrac{b\di}{\bi},\qquad  \forall\, i\ge i_0.
$$

Since $\pb{\phi}\ne0$, we have necessarily $t\bi\le b$. Otherwise, the sequence $(\be_i)_{i\ge0}$ would admit an upper bound in $\left(\gi\right)_\Q$, and hence in $\gi$, against our assumption.

This proves  $t\bi\le b$, and the equality follows from Lemma \ref{bt>=ml}.
\end{proof}

\begin{corollary}\label{chK=0}
If $\chr(K)=0$ and the sequence $\left(\be_i\right)_{i\ge0}$ is unbounded in $\gi$,  then $$\ti=\bi=1.$$ 
\end{corollary}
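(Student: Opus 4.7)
The plan is to deduce the corollary directly from Theorem \ref{ubthm} by evaluating $\ml(\phi)$ in characteristic zero. Pick any $\phi \in \kpi$, which exists because the chain is essential (this is implicit in the setup of Section \ref{subsecBasicInvs}). As recalled in Section \ref{secAKP}, when $\chr(K)=0$ the ordinary first derivative $\partial_1\phi$ is already nonzero for any nonconstant polynomial, so by the very definition of $\ml$ we get $\ml(\phi)=1$.

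Theorem \ref{ubthm} applies verbatim under our hypothesis that $(\be_i)_{i\ge0}$ is unbounded in $\gi$, and yields
\[
\ti\,\bi \;=\; \ml(\phi) \;=\; 1.
\]
Since $\ti$ and $\bi$ are positive integers by their definitions in Section \ref{subsecBasicInvs}, the only factorization of $1$ available is $\ti = \bi = 1$.

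There is no real obstacle: all the substantive work has been done in Theorem \ref{ubthm}, and the corollary is pure bookkeeping. The only point worth flagging is that the hypothesis is on $\chr(K)$ rather than on $\chr(k)$, so one cannot shortcut via the characteristic exponent $p$ of the residue field (which can still be positive in mixed characteristic); the argument must go through $\ml(\phi)=1$, which is exactly what the assumption $\chr(K)=0$ provides.
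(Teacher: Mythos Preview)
Your proof is correct and follows exactly the paper's own argument: use $\chr(K)=0$ to get $\ml(\phi)=1$, then apply Theorem \ref{ubthm} to conclude $\ti\bi=1$, hence $\ti=\bi=1$. Your additional remark distinguishing $\chr(K)$ from $\chr(k)$ is a nice clarification not present in the paper.
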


\begin{proof}
If $\chr(K)=0$, then $\ml(\phi)=1$.
\end{proof}\e

In W. Mahboub PhD thesis \cite{M}, some examples of continuous MacLane chains and limit key polynomials are exhibited. Among the HB ones, there are some examples in which the inequality $\ti\bi\ge\ml(\phi)$ is an equality (Examples 5.3.1, 5.3.2, and 5.3.3), and one where it is an strict inequality (Example 5.3.4).

On the other hand, any monic irreducible polynomial $\phi\in \kx$ which determines an extension of $K$ with defect, is a limit key polynomial of a suitable continuous MacLane chain. In the survey \cite{Kul} of F.V. Kulhmann, some VB examples are exhibited. Among them, we find  some cases where the inequality $\ti\bi\ge\ml(\phi)$ is an equality (Example 3.14), and some  where it is an strict inequality (Examples 3.12, 3.17, 3.20 and 3.22).\bs\bs


\begin{thebibliography}{}
%\bibitem{valuedfield}A. J. Engler, A. Prestel, \emph{Valued fields}, Springer, Berlin, 2005. 

\bibitem{Dec}J. Decaup, W. Mahboub, M. Spivakovsky, \emph{Abstract key polynomials and comparison theorems with the key polynomials of MacLane-Vaqui\'e}, Illinois J. Math. {\bf 62}, Number 1-4 (2018), 253--270.


%\bibitem{hos}F.J. Herrera Govantes, M.A. Olalla Acosta, M. Spivakovsky, \emph{Valuations in algebraic field extensions}, Journal of Algebra {\bf 312} (2007), no. 2, 1033--1074.

\bibitem{hmos}F.J. Herrera Govantes, W. Mahboub, M.A. Olalla Acosta, M. Spivakovsky, \emph{Key polynomials for simple extensions of valued fields}, preprint, arXiv:1406.0657v4 [math.AG], 2018.

%\bibitem{mahboub}W. Mahboub, \emph{Key polynomials}, Journal of Pure and Applied Algebra {\bf 217} (2013), no. 6, 989--1006.


\bibitem{Kul} F.-V. Kuhlmann, \emph{The defect},  in: Commutative Algebra - Noetherian and non-Noetherian perspectives, Marco Fontana, Salah-Eddine Kabbaj, Bruce Olberding and Irena Swanson (eds.), Springer 2011.

\bibitem{mcla} S. MacLane, \emph{A construction for absolute values in polynomial rings}, Transactions of the American Mathematical Society {\bf40} (1936), pp. 363--395.

%\bibitem{mclb} S. MacLane, \emph{A construction for prime ideals as absolute values of an algebraic field}, Duke Mathematical Journal {\bf2} (1936), pp. 492--510.

%\bibitem{defless} N. Moraes de Oliveira, E. Nart, \emph{defectless polynomials over henselian fields and inductive valuations}, J. Algebra, in press, DOI

\bibitem{M}  W. Mahboub, \emph{Une construction explicite de polyn$\hat{\mbox{o}}$mes-cl\'e pour des valuations de rang fini}, Th\`ese de Doctorat, Institut de Math\'ematiques de Toulouse, Novembre 2013.   



\bibitem{KP} E. Nart, \emph{Key polynomials over valued fields}, Publ. Mat. {\bf 64} (2020), 3--42. 

\bibitem{MLV} E. Nart, \emph{MacLane-Vaqui\'e chains of valuations on a polynomial ring}, arXiv:1911.01714 [math.AG]. 


\bibitem{NS2016} J. Novacoski, M. Spivakovsky, \emph{On the local uniformization problem}, Banach Center Publications {\bf 108} (2016), 231--238.
\bibitem{NS2018} J. Novacoski, M. Spivakovsky, \emph{Key polynomials and pseudo-convergent sequences}, J. Algebra {\bf 495} (2018), 199--219.   

\bibitem{NS2019} J. Novacoski, M. Spivakovsky, \emph{Key polynomials and minimal pairs}, J. Algebra {\bf 523} (2019), 1--14.   


\bibitem{Rib} P. Ribenboim, \emph{Th\'eorie des valuations}, Presses Univ. Montr\'eal, Montr\'eal, 1968. 

\bibitem{SS}
J.-C. San Saturnino, \emph{Defect of an extension, key polynomials and local uniformization}, Journal of Algebra {\bf 481} (2017), 91--119.


\bibitem{Vaq2004}M. Vaqui\'e, \emph{Famille admisse associ\'ee \`a une  valuation de $\kx$}, Singularit\'es Franco-Japonaises, S\'eminaires et Congr\'es 10, SMF, Paris (2005), Actes du colloque franco-japonais, juillet 2002, \'edit\'e par Jean-Paul Brasselet et Tatsuo Suwa, 391--428.

\bibitem{Vaq}
M. Vaqui\'e, \emph{Extension d'une valuation}, Transactions of the American Mathematical Society  {\bf 359} (2007), no. 7, 3439--3481.

%\bibitem{Vaq2}M. Vaqui\'e, \emph{Famille essential de valuations et d\'efaut d'une extension}, Journal of Algebra {\bf 311} (2007), no. 2, 859--876.

%\bibitem{Vaq3}M. Vaqui\'e, \emph{Extensions de valuation et polygone de Newton}, Annales de l'Institute Fourier (Grenoble) {\bf 58} (2008), no. 7, 2503--2541.

\end{thebibliography}
\end{document}